\spnewtheorem{convention}[theorem]{Convention}{\itshape}{\rmfamily}
\spnewtheorem{notation}[theorem]{Notation}{\itshape}{\rmfamily}
\spnewtheorem{construction}[theorem]{Construction}{\bfseries}{\rmfamily}
\spnewtheorem{para}[theorem]{\nocaption}{\bfseries}{\rmfamily}
\spnewtheorem{ex}[theorem]{Example}{\bfseries}{\rmfamily}
\spnewtheorem{lemmadef}[theorem]{Lemma and Definition}{\bfseries}{\itshape}
\DeclareMathOperator{\End}{\mathrm{End}}
\DeclareMathOperator{\Hom}{\mathrm{Hom}}
\DeclareMathOperator{\Ext}{\mathrm{Ext}}
\DeclareMathOperator{\kernel}{\mathrm{ker}}
\DeclareMathOperator{\cokernel}{\mathrm{cok}}
\DeclareMathOperator{\image}{\mathrm{im}}
\newcommand{\id}{\mathrm{id}}
\newcommand{\op}{\mathrm{op}}
\newcommand{\Modl}{\text{-}\mathbf{Mod}}
\newcommand{\Modr}{\mathbf{Mod}\text{-}}
\newcommand{\modl}{\text{-}\mathbf{mod}}
\newcommand{\modr}{\mathbf{mod}\text{-}}
\newcommand{\Ab}{\mathbf{Ab}}
\newcommand{\Beh}[2]{\mathbf{ab}({#2})}
\newcommand{\yoneda}{\mathrm{Yoneda}}
\newcommand{\defect}{\mathrm{Defect}}
\newcommand{\codefect}{\mathrm{Covdefect}}
\newcommand{\ev}[1]{\mathrm{Eval}_{#1}}
\newcommand{\asBeh}[2]{{#1}[{#2}]}
\newcommand{\asBehmor}[2]{{#1}_{#2}}
\newcommand{\ringmap}{\sigma}
\newcommand{\Nzero}{\mathbb{Z}_{\geq 0}}
\newcommand{\N}{\mathbb{N}}
\newcommand{\Z}{\mathbb{Z}}
\newcommand{\R}{\mathbb{R}}
\newcommand{\K}{\mathbb{K}}
\newcommand{\Quo}{\mathbf{Quo}}
\newcommand{\Sub}{\mathbf{Sub}}
\newcommand{\AC}{\mathbf{A}}
\newcommand{\BC}{\mathbf{B}}
\newcommand{\CC}{\mathbf{C}}
\newcommand{\DC}{\mathbf{D}}
\newcommand{\ff}{\mathcal{F}}
\newcommand{\M}{M}
\newcommand{\pmatcol}[2]{ \begin{pmatrix}{#1} \\ {#2} \end{pmatrix} }
\begin{document}

\title*{An abelian ambient category for behaviors in algebraic systems theory}
\author{Sebastian Posur}
\institute{Sebastian Posur \at University of Münster,
Fachbereich Mathematik und Informatik,
Einsteinstraße 62,
48149 Münster \email{sebastian.posur@uni-muenster.de}
}
%
%
\maketitle

\abstract{We describe an abelian category $\mathbf{ab}(M)$ in which the solution sets of finitely many linear equations over an arbitrary ring $R$ with values in an arbitrary left $R$-module $M$ reside as objects.
Such solution sets are also called behaviors in algebraic systems theory.
We both characterize $\mathbf{ab}(M)$ by a universal property and give a construction of $\mathbf{ab}(M)$ as a Serre quotient of the free abelian category generated by $R$.
We discuss features of $\mathbf{ab}(M)$ relevant in the context of algebraic systems theory: if $R$ is left coherent and $M$ is an fp-injective fp-cogenerator, then $\mathbf{ab}(M)$ is antiequivalent to the category of finitely presented left $R$-modules. This provides an alternative point of view to the important module-behavior duality in algebraic systems theory. We also obtain a dual statement: if $R$ is right coherent and $M$ is fp-faithfully flat, then $\mathbf{ab}(M)$ is equivalent to the category of finitely presented right $R$-modules. As an example application, we discuss delay-differential systems with constant coefficients and a polynomial signal space. Moreover, we propose definitions of controllability and observability in our setup.}


\keywords{
Linear systems,
algebraic systems theory,
algebraic analysis,
module-behavior duality,
finitely presented functors,
free abelian categories,
Serre quotients,
pp formulas,
defect of a functor
}

\section{Introduction}
This paper deals with the following question:
What is a ``good''  notion of a homomorphism between solution sets of finitely many linear equations over a ring $R$ with values in an $R$-module $M$, i.e., sets of the form
\begin{equation}\label{equation:solution_set}
\{ x \in M^{n \times 1} \mid Ax = 0\}    
\end{equation}
for $A \in R^{m \times n}$ a matrix with entries in $R$, and $m,n \in \Nzero$.
We can slightly reframe this question: What is a ``good'' category in which such solution sets reside as objects, i.e., a ``good'' ambient category for such solution sets? We emphasize that we ask this question for an arbitrary module $M$ over an arbitrary ring $R$, in particular, we do not assume $R$ to be commutative.
Besides its fundamental character which makes this question interesting in its own right, it arises naturally in algebraic systems theory.
For example, in the case of dynamical systems, $R$ is usually a ring of differential/difference operators, $M$ is some $R$-module of functions (e.g., a module
of smooth functions or distributions), and consequently, the objects of interest are solution sets of linear differential/difference equations. See, e.g., \cite{Robertz15} for an introduction to the so-called algebraic analysis approach to linear systems.

Following the terminology coined by Willems in his behavioral approach to systems and control theory \cite{WilParadigms}, we refer to solutions sets as in \eqref{equation:solution_set} by the term \emph{behaviors} and call the module $M$ a \emph{signal space}.
In this paper, we will even adhere to this terminology in the context of arbitrary modules over arbitrary rings, since the context of algebraic systems theory is the main motivating factor of this paper.

Malgrange \cite{MalgrangeIsomorphism} observed that we can identify a behavior as in \eqref{equation:solution_set} with the set of $R$-module homomorphisms
\begin{equation}\label{equation:solution_set_as_hom}
\Hom( \cokernel( A ), M )
\end{equation}
in a natural way (see \ref{lemma:malgrange_iso} for details).
Here, $\cokernel( A )$ denotes the finitely presented $R$-module that arises as the cokernel of the morphism $R^{1 \times m} \rightarrow R^{1 \times n}$ between free $R$-modules defined by the matrix $A$.
The functoriality of $\Hom$ in both of its components makes the following two structural aspects of behaviors immediately evident:
\begin{itemize}
    \item $\Hom( \cokernel( A ), M )$ is an $\End(M)$-module, where $\End(M)$ denotes the ring of $R$-module endomorphisms of $M$.
    \item A morphism between finitely presented $R$-modules $\cokernel( A ) \rightarrow \cokernel( B )$ gives rise to a map between behaviors
    \begin{equation}\label{equation:hom_between_homsets}
    \Hom( \cokernel( B ), M ) \rightarrow \Hom( \cokernel( A ), M )
    \end{equation}
    in a contravariant way, where $B$ denotes a matrix over $R$.
\end{itemize}
Oberst uses this first structural aspect as the decisive feature of behaviors\footnote{Oberst formulates his ideas in \cite{Ob} in the context where $R$ is a noetherian commutative ring.}. Accordingly, he defines homomorphisms of behaviors as $\End(M)$-module homomorphisms and consequently regards the category $\End(M)\Modl$ of all $\End(M)$-modules as an appropriate ambient category for behaviors \cite[Definition and Corollary 15]{Ob}.

The map in \eqref{equation:hom_between_homsets} is an $\End(M)$-module homomorphism by the functoriality of $\Hom$. It follows that we may regard $\Hom(-,M)$ as a functor of type
\begin{equation}\label{equation:hom_M_with_new_type}
(R\modl)^{\op} \rightarrow \End(M)\Modl
\end{equation}
where $R\modl$ denotes the category of finitely presented $R$-modules.
Oberst proves that this functor is fully faithful (and hence induces an equivalence with its essential image) if the signal space $M$ is a large injective cogenerator. Here, large means that every $N \in R\modl$ can be embedded into a \emph{finite} direct sum of $M$.

A remarkable example of a large injective cogenerator is given by smooth real functions $\mathcal{C}^{\infty}(\R^n,\R)$ regarded as a module over the polynomial ring $\R[\partial_1, \dots, \partial_n]$ whose $n \in \Nzero$ indeterminates act via partial derivations. 
Its injectivity was proven by Malgrange in \cite{MalgrangePhD}, Oberst proved its largeness in \cite{Ob}.
See also \cite{ObFr},\cite{Zer06} for further examples relevant to algebraic systems theory.

The functor in \eqref{equation:hom_M_with_new_type} has moreover motivated the development of computer algebra tools for performing sophisticated computations with $R$-modules, e.g., 
the computation of torsion submodules of modules over Ore algebras \cite{CQR05}, or the computation of so-called grade filtrations \cite{QGrade}, \cite{barhabil}.
In fact, an algorithmic treatment of $R\modl$ (for various rings) has been realized by several computer algebra packages \cite{OreModules07}, \cite{homalg-project}, \cite{CAP-project} with the goal of performing effective computations which are helpful in an algorithmic study of structural properties of linear systems.

Computationally relevant aspects of behaviors can already be dealt with when $M$ is an injective cogenerator that is not necessarily large. In \cite{WoodKey}, Wood explains\footnote{Wood works in the context where $R$ is a left and right noetherian domain.}: 
\begin{itemize}
    \item The cogenerator property ensures that containment of one behavior within another can be decided by mere computations within $(R\modl)^{\op}$ \cite[Lemma 3.5]{WoodKey}. We provide a purely categorical explanation of this fact in \ref{remark:first_key_problem}.
    \item The injectivity ensures that we can solve the elimination (of quantifiers) problem by mere computations within $(R\modl)^{\op}$. Here, the elimination problem poses the question if all sets of the form
    \begin{equation}\label{equation:solution_set_pp}
    \{ x \in M^{k\times 1} \mid \exists x'~ A \pmatcol{x}{x'} = 0 \}    
    \end{equation}
    for $0 \leq k \leq n$ can be rewritten without the existential quantifier, i.e., is of the form as in \eqref{equation:solution_set} \cite[Lemma 4.2]{WoodKey}.
\end{itemize}
The property of $M$ being ``large'', on the other hand, does not seem to play a crucial role in an algorithmic treatment of behaviors. From a theoretical point of view, it merely ensures that the functor in \eqref{equation:hom_M_with_new_type} is full.

When $M$ is not an injective cogenerator, the functor in \eqref{equation:hom_M_with_new_type} need neither be faithful nor right exact, which limits the ability to draw conclusions about behaviors from mere computations within $(R\modl)^{\op}$, see also \cite{WoodKey}.

In this paper we propose an ambient category for behaviors, denoted by $\Beh{R}{M}$, for an arbitrary module $M$ over an arbitrary ring $R$.
We characterize $\Beh{R}{M}$ as the category which is universal (initial) among all categories which satisfy the following three specifications:
\begin{enumerate}
    \item The category $\Beh{R}{M}$ is abelian.
    \item We have a faithful and exact functor
    \[
    \Beh{R}{M} \rightarrow \Ab
    \]
    into the category of abelian groups. 
    \item There is a designated object $\asBeh{\ff}{M} \in \Beh{R}{M}$ with designated endomorphisms \[\asBeh{\ff}{M} \xrightarrow{r}  \asBeh{\ff}{M}\] for all $r \in R$. These endomorphisms map to 
    \[
    M \xrightarrow{x \mapsto rx} M
    \]
    via the functor $\Beh{R}{M} \rightarrow \Ab$ of the second specification.
\end{enumerate}
See \ref{theorem:up_of_beh} for a precise formulation of the universal property of $\Beh{R}{M}$.

This is how we should think about $\Beh{R}{M}$ and the three specifications:
We call the objects in $\Beh{R}{M}$ \emph{abstract behaviors}.
We think of the functor in the second specification as ``taking the underlying abelian group'' of an abstract behavior.
We call $\asBeh{\ff}{M}$ the \emph{abstract signal space} since its underlying abelian group is given by the signal space due to the third specification.
Since $\Beh{R}{M}$ has finite direct sums and using the third specification, we see that any matrix $A \in R^{m \times n}$ gives rise to an endomorphism
\begin{equation}\label{equation:map_abstract_signal_space}
\asBeh{\ff}{M}^{\oplus n} \xrightarrow{A} \asBeh{\ff}{M}^{\oplus m} 
\end{equation}
whose underlying abelian group map is given by
\begin{equation}\label{equation:underlying_map}
M^{n \times 1} \rightarrow
{M}^{m \times 1}: x \mapsto Ax.
\end{equation}
The kernel of the map in \eqref{equation:underlying_map} is exactly the behavior in \eqref{equation:solution_set}. It follows that we should think about the kernel of the morphism in \eqref{equation:map_abstract_signal_space}, which exists by the first specification, as the abstract behavior whose underlying abelian group is the behavior of \eqref{equation:solution_set}.
Since abelian categories also possess cokernels and images, we furthermore have abstract behaviors within $\Beh{R}{M}$ whose underlying abelian groups are given as in \eqref{equation:solution_set_pp} or by quotients of two such abelian groups.

This paper provides a construction of $\Beh{R}{M}$ in Subsection \ref{subsection:construction_of_behM} and a proof that this construction satisfies the universal property in \ref{theorem:up_of_beh}.
Moreover, we prove the following equivalence of categories
\[
\Beh{R}{\M} \simeq (R\modl)^{\op}
\]
in the case where $R$ is a left coherent ring and $\M \in R\Modl$ is fp-injective and an fp-cogenerator (Definition \ref{definition:fpcogenerator}). In particular, we do not assume $M$ to be a large cogenerator. In this sense, we recover the famous module-behavior duality of algebraic systems theory.
We note that coherent rings occur quite naturally in algebraic analysis, e.g., the Hardy algebra is coherent \cite{HardyAlg} and has been used to study stabilization problems in control theory (see, e.g., \cite{Zames81}).
Moreover, we emphasize again that $R$ does not need to be commutative and $R\modl$ is not necessarily equivalent\footnote{A typical situation in which $R\modl$ is equivalent to $\modr R$ is given whenever we have a ring isomorphism $R \cong R^{\op}$, which is also called an \emph{involution}. Such involutions are an important computational tool in algebraic analysis, see, e.g., \cite[Remark 3.11]{Robertz15}.} to $\modr R$.

Moreover, we obtain a dual picture. If $R$ is a right coherent ring and $\M \in R\Modl$ is an fp-faithfully flat module (Definition \ref{definition:fp_faithfully_flat}), then we have an equivalence
\[
\Beh{R}{\M} \simeq \modr R
\]
which we prove in \ref{corollary:equivalence_beh_modR}.
These results also appear in an exposition by Prest \cite[Proposition 7.1, Proposition 7.2]{prest2012categories} with a particular emphasis on the so-called definable subcategories corresponding to $\Beh{R}{\M}$ in both cases.

We compare our setup with the one by Oberst in \ref{example:comparision_with_oberst}: first, the category $\End(M)\Modl$ also satisfies the three specifications given above. Consequently, the universal property of $\Beh{R}{\M}$ provides a faithful comparison functor
\[
\Beh{R}{\M} \rightarrow \End(M)\Modl
\]
which commutes with taking the underlying abelian groups, but which is not full in general. Hence, our setup admits fewer morphisms than Oberst's setup. This complies with the observation that $M$ being ``large'' should not play an essential role in our theory.

In Section \ref{section:example}, we give an example application: we study delay-differential systems with constant coefficients and a polynomial signal space. In this example, the signal space is not injective, however, we nevertheless manage to describe the corresponding ambient abelian category for behaviors.

In Section \ref{section:controllability} we discuss the notions \emph{controllability} and \emph{observability} in our proposed setup .

The construction of $\Beh{R}{M}$ builds both on the theory of finitely presented functors and Serre quotients.
The theory of finitely presented functors was introduced by Maurice Auslander in \cite{A}.
A functor of type $\AC \rightarrow \Ab$ for $\AC$ an additive category is called finitely presented if it arises as the cokernel of a natural transformation between representable functors. 
In this paper, we are mainly interested in finitely presented functors of type $R\modl \rightarrow \Ab$, and we denote their category by $R\modl\modl$.
The relevance of $R\modl\modl$ lies in the fact that it can be seen as the abelian category freely generated by $R$ (Theorem \ref{theorem:up_Rmodmod}).
The category $R\modl\modl$ has been studied and used by many authors, see, e.g., \cite{AusFun}, \cite{Beligiannis2000},\cite{HerzogContravariant}, \cite{HID14}.  We recall several features of $R\modl\modl$ in Section \ref{section:category_rmodmod}.

We construct $\Beh{R}{M}$ as a Serre quotient of $R\modl\modl$ by modding out those finitely presented functors whose evaluation at $M$ yields zero. 
The resulting category has already been introduced by Prest in \cite{PrestSurvey}, where he argues that this category should even be considered as an appropriate replacement of $M$ itself.
Serre quotients of $R\modl\modl$ were also studied as an alternative language for dealing with the model theory of modules, see \cite{PrestPSL} for details.

The main contribution of this paper is that it highlights the synergies between the theory of finitely presented functors and algebraic systems theory. To the best of the author's knowledge, such a link between both theories has not been studied before in the literature.

\section{Notations and conventions}
\begin{convention}
All functors between (pre)additive categories are supposed to be additive.
\end{convention}

\begin{notation}
Throughout our paper, $R$ denotes an arbitrary ring. Whenever we need special properties for $R$, we will make them explicit.
\end{notation}

\begin{notation}[Matrices as module homomorphisms]
For a ring $R$ and $n \in \Nzero$, we denote by $R^{1 \times n}$ the \emph{left} free $R$-module of rows with $n$ entries. Likewise, we denote by $R^{n \times 1}$ the \emph{right} free $R$-module of columns with $n$ entries. 
A matrix $A \in R^{m \times n}$ gives rise to both a morphism between free left modules
\[
R^{1 \times m} \xrightarrow{A} R^{1 \times n}: x \mapsto x\cdot A
\]
and between free right modules
\[
R^{n \times 1} \xrightarrow{A} R^{m \times 1}: x \mapsto A\cdot x.
\]
By abuse of notation, we will refer to both of these morphisms by $A$.
\end{notation}

\begin{notation}[Categories]
All categories are printed in bold. Moreover, we make use of the following categories:
\begin{itemize}
    \item $\Ab$: the category of abelian groups.
\end{itemize}
For $R$ a ring, we have:
\begin{itemize}
    \item $R\Modl$: the category of all left $R$-modules.
    \item $\Modr R$: the category of all right $R$-modules.
    \item $R\modl$: the category of finitely presented left $R$-modules.
    \item $\modr R$: the category of finitely presented right $R$-modules.
\end{itemize}
We write ${_R}M$ if we wish to highlight that $M \in R\Modl$ is a left module over $R$.
Likewise, we write $M{_R}$ if we wish to highlight that $M \in \Modr R$ is a right module over $R$.
\end{notation}

\begin{notation}[Morphisms in a category]
Let $\CC$ be a category, and let $A,B \in \CC$ be two of its objects. We denote the set of homomorphisms from $A$ to $B$ by $\Hom_{\CC}(A,B)$, or simply $\Hom(A,B)$ if the category is clear from the context.
We also say that a morphism $\alpha \in \Hom_{\CC}(A,B)$ is of \emph{type} $A \rightarrow B$.
We regard certain morphisms of a particular type as being \textbf{canonical}. 
For example, if $\alpha: A \rightarrow B$ is a morphism in an abelian category, then we regard its kernel embedding as the canonical morphism of type $\kernel( \alpha ) \rightarrow A$, and its cokernel projection as the canonical morphism of type $B \rightarrow \cokernel( \alpha )$.
We will mark several morphisms (in particular functors) as being canonical within this paper. And whenever we refer to a particular morphism by simply providing its type, we always mean to refer to the canonical morphism of that type.
Furthermore, we use the abbreviations mono/epi/iso for monomorphism/epimorphism/isomorphism.
\end{notation}

\begin{notation}[Functors]
Let $R$ be a ring and $M \in R\Modl$. We denote its hom functor by
\[
\Hom( M, - ): R\Modl \rightarrow \Ab
\]
and its tensor functor (over $R$) by
\[
(- \otimes M): \Modr R \rightarrow \Ab.
\]
For $N \in \Modr R$, we denote its tensor functor (over $R$) by
\[
(N \otimes -): R\Modl \rightarrow \Ab.
\]
The forgetful functor
\[
\ff: R\Modl \longrightarrow \Ab
\]
that maps an $R$-module $M$ to its underlying abelian group is of special importance. We use the symbol $\ff$ exclusively for the forgetful functor in this paper, in particular, we never use this symbol as a variable denoting any other functor but the forgetful one.
\end{notation}

\section{Behaviors with latent variables}
In this section, we introduce behaviors with latent variables.

\begin{definition}[pp formula]
Let $R$ be a ring, $m,n \in \Nzero$ and $k \in \{0, \dots, n\}$.
Suppose given two matrices $B \in R^{m \times k}$, $B' \in R^{m \times (n-k)}$.
We call a formula with $k$ \textbf{free variables} 
\[
x = (x_1, \dots, x_k)^{\mathrm{tr}}
\]
and $n - k$ \textbf{bound variables} 
\[
x' = (x_{k+1}, \dots, x_{n})^{\mathrm{tr}}
\]
of the following form
\begin{equation}\label{equation:ppformula}
\exists x'~ A \pmatcol{x}{x'} = 0
\end{equation}
the \textbf{pp formula\footnote{pp stands for positive primitive} over $R$}
defined by $B$ and $B'$, where $A := (B|B') \in R^{m \times n}$ is the concatenated matrix of $B$ and $B'$.
We also write
\[
{_B}\phi_{B'}
\]
in order to refer to such a pp formula.
If all variables are free, i.e., no variables are bound, i.e., $k = n$, then we speak of a \textbf{quantifier-free pp formula} since it is simply given by the following system of linear equations:
\[
A x = 0.
\]
We also write
\[
{_A}\phi
\]
in order to refer to such a quantifier-free pp formula.
\end{definition}

\begin{para}
Suppose given a pp formula ${_B}\phi_{B'}$.
Since the matrix $A = (B|B')$ acts from the left on the column $\pmatcol{x}{x'}$, a natural context for evaluating $\phi$ is provided by the elements of a left $R$-module $M$. For a tuple $\omega \in M^{k \times 1}$, we set $\phi( \omega )$ as true if
\[
\exists \omega' \in M^{(n-k) \times 1}: \big(A \pmatcol{\omega}{\omega'} = 0\big)
\]
holds, and as false otherwise.
\end{para}

\begin{definition}\label{def:behavior}
Let $M$ be an $R$-module.
A set of the form
\[
\mathcal{B}( \phi, M ) \coloneqq \{ \omega \in M^{k \times 1} \mid \phi( \omega ) \}
\]
where $\phi(x_1, \dots, x_k)$ is a pp formula is called a \textbf{behavior (over $M$)}. 
The bound variables of $\phi$ are also refered to as the \textbf{latent variables} of the behavior $\mathcal{B}( \phi, M )$.
We call an element in $\mathcal{B}( \phi, M )$ a \textbf{trajectory}.
In this context, the module $M$ is also called the \textbf{signal space}.
\end{definition}

\begin{remark}
By using the names \emph{behavior}, \emph{latent variables}, \emph{trajectory}, and \emph{signal space} for the abstract mathematical objects in \ref{def:behavior}, we make use of the terminology from Willems's approach to systems theory \cite{WilParadigms}.
We remark that Willems also refers to the triple $(M,k,\mathcal{B}( \phi, M ))$ as a \textbf{system}.
We also remark that Willems thinks of the behavior as a subobject of $M^{k \times 1}$, i.e., from the categorical point of view, it comes equipped with its embedding $\mathcal{B}( \phi, M ) \hookrightarrow M^{k \times 1}$.
This is why the mere object $\mathcal{B}( \phi, M )$ is sometimes refered to as an \textbf{abstract behavior} \cite[Page 418]{LomadzeBeh}, i.e., a behavior without its embedding.
We will formally introduce the term \emph{abstract behaviors} for the objects in our abelian ambient category for behaviors in \ref{remark:abstract_names}.
\end{remark}

We give several examples from which the concrete terminology introduced in \ref{def:behavior} originates.

\begin{ex}[Linear ordinary differential equations with constant coefficients]\label{ex:linear_odes}
Let $R := \R[\partial]$ denote the ring of polynomials in one indeterminate $\partial$ with real coefficients.
Then the set of smooth functions $M := \mathcal{C}^{\infty}(\R, \R)$ from $\R$ to $\R$ can be endowed with the structure of an $R$-module by letting $\partial$ act on $f \in M$ via the derivative
\[
\partial f(t):= \frac{df(t)}{dt}.
\]
Now, a linear ordinary differential equation with constant coefficients can be encoded by a quantifier-free pp formula ${_A}\phi$ of the form
\[
A \cdot x = 0
\]
where $A \in R^{m \times n}$.
Its set of solutions within $M$ is given by $\mathcal{B}( {_A}\phi, M )$.
In this example, elements of $\mathcal{B}( {_A}\phi, M )$ can be directly interpreted as trajectories, i.e., as paths of some point moving in time within $\R^n$. Moreover, the subset $\mathcal{B}( {_A}\phi, M ) \subset M^n$ can be interpreted as defining all admissible paths that can be taken within our system. In that sense, it describes the system's behavior.
\end{ex}

\begin{ex}[Linear ordinary difference equations with constant coefficients]\label{ex:linear_odiffes}
Let $\K$ be an arbitrary field and let $R := \K[\sigma]$ denote the ring of polynomials in one indeterminate $\sigma$ with coefficients in $\K$.
Let $T \in \{ \N, \Z \}$. Then $M := \K^T$, i.e., the set of functions of type $T \rightarrow \K$, can be endowed with the structure of an $R$-module by letting $\sigma$ act on $f \in M$ via the unit shift
\[
\sigma f(t) := f(t + 1).
\]
Now, a linear ordinary difference equation can be encoded by a quantifier-free pp formula ${_A}\phi$ of the form
\[
A \cdot x = 0
\]
where $A \in R^{m \times n}$.
Again, elements in the solution set $\mathcal{B}( {_A}\phi, M ) \subset M^n$ can directly be intepreted as (discrete) trajectories.
\end{ex}

\begin{ex}\label{ex:pdes}
Both \ref{ex:linear_odes} and  \ref{ex:linear_odiffes} can be generalized to the case of several variables:
\begin{itemize}
    \item The commutative polynomial ring $R := \R[\partial_1, \dots, \partial_n]$ in $n \in \Nzero$ indeterminates acts on $M := \mathcal{C}^{\infty}(\R^n,\R)$ via partial derivations, i.e.,
    \[
    \partial_i(f) := \frac{\partial f}{\partial x_i}
    \]
    for $i = 1, \dots n$, where $f \in M$ is a function with arguments $x_1, \dots,x_n$.
    This leads to linear partial differential equations with constant coefficients.
    \item $R := \K[\sigma_1, \dots, \sigma_n]$ acts on $M := \K^{T^n}$ via unit shifts, i.e.,
    \[
    \sigma_i(f) := f(x_1, \dots, x_{i-1}, x_i + 1, x_{i+1}, \dots, x_n)
    \]
    for $i = 1, \dots n$, where $f \in M$ is a function with arguments $x_1, \dots,x_n$, $\K$ is an arbitrary field, and $T \in \{ \N, \Z\}$.
    This leads to linear partial difference equations with constant coefficients.
\end{itemize}
\end{ex}

\begin{para}
A behavior $\mathcal{B}( \phi, M )$ is always an abelian subgroup of $M^{k \times 1}$.
If $R$ is commutative, then $\mathcal{B}( \phi, M )$ is even an $R$-submodule of $M^{k \times 1}$, however, for non-commutative rings, this is not true in general.
\end{para}

\begin{para}\label{para:question_mor_of_beh}
The following natural question arises: what is an appropriate notion of a morphism between two behaviors over $M$? Treating behaviors merely as abelian groups means forgetting too much of their defining context. It is a main goal of this paper to describe an ambient category of behaviors over $M$.
\end{para}

\section{The functor of trajectories}
In this section, we discuss consequences of the functoriality of behaviors in $M$.

\begin{construction}[Behaviors are functorial in $M$]
Let $\phi(x_1, \dots, x_k)$ be a pp formula over $R$. We construct a functor of the form
\[
\mathcal{B}( \phi, - ): R\Modl \longrightarrow \Ab.
\]
This functor maps an object $M \in R\Modl$ to its behavior $\mathcal{B}( \phi, M )$. 
A morphism $\alpha: M \rightarrow N$ in $R\Modl$ is mapped to the function
\begin{align*}
\mathcal{B}( \phi, M ) &\rightarrow \mathcal{B}( \phi, N ) \\
(\omega_1, \dots, \omega_k) &\mapsto (\alpha(\omega_1), \dots, \alpha(\omega_k)).
\end{align*}
This function is well-defined and additive since $\alpha$ is $R$-linear. We call the functor $\mathcal{B}( \phi, - )$ the \textbf{functor of trajectories} (defined by $\phi$).
\end{construction}

\begin{para}
The name \emph{functor of trajectories} is chosen in analogy to the well-known \emph{functor of points} in algebraic geometry, which maps a commutative ring $K$ to the $K$-valued solutions of a system of polynomial equations.
\end{para}

\begin{para}
The evident notion of a morphism between two functors of trajectories $\mathcal{B}( \phi, - )$ and $\mathcal{B}( \phi', - )$ is given by the notion of a natural transformation between functors.
\end{para}

\begin{ex}
The forgetful functor
\[
\ff: R\Modl \longrightarrow \Ab
\]
that maps an $R$-module $M$ to its underlying abelian group is a functor of trajectories, defined by the following pp formula with a single free variable:
\[
0\cdot x = 0.
\]
We have the following natural isomorphisms for the forgetful functor:
\[
\ff \simeq \Hom( {_R}R, - ) \simeq (R_{R} \otimes -).
\]
\end{ex}

\begin{notation}
Since the forgetful functor will play such a prominent role in this paper, we are going to use the symbol $\ff$ exclusively as its notation.
\end{notation}

\begin{ex}[Natural endomorphisms of $\ff$]\label{example:end_of_ff}
An element $r \in R$ defines the natural endomorphism $\ff \rightarrow \ff$ whose component at $M \in R\Modl$ is given by\begin{align*}
\ff(M) &\rightarrow \ff(M) \\
m &\mapsto rm.
\end{align*}
By abuse of notation, we denote this natural transformation by $\ff \xrightarrow{r} \ff$. It follows from Yoneda's lemma that all endomorphisms of $\ff$ are of this form\footnote{Yoneda's lemma gives us the bijection $\End( \Hom( {_R}R, - ) ) \simeq \End( _{R}R )$.}.
More generally, a matrix $A \in R^{m \times n}$ with $m,n \in \Nzero$ gives rise to a natural transformation $\ff^{\oplus n} \rightarrow \ff^{\oplus m}$ whose component at the module $M$ is given by the morphism of abelian groups
\begin{align*}
\ff(M)^{\oplus n} \rightarrow \ff(M)^{\oplus m}: x \mapsto A \cdot x.
\end{align*}
By abuse of notation, we denote this natural transformation again by $\ff^{\oplus n} \xrightarrow{A} \ff^{\oplus m}$.
Yoneda's lemma implies that all natural transformations of type $\ff^{\oplus n} \rightarrow \ff^{\oplus m}$ are of this form.
\end{ex}

\begin{para}\label{para:beh_canonical_inclusion}
We have a \emph{canonical} inclusion of a given functor of trajectories into the $k$-fold finite direct sum of the forgetful functor, where $k$ is the number of free variables of the defining pp formula $\phi$:
\begin{align*}
\mathcal{B}( \phi, - ) \hookrightarrow \ff^{\oplus k}.
\end{align*}
If $\phi = {_A}\phi$ is quantifier-free and defined by the matrix $A \in R^{m \times n}$, then we even have an exact sequence of functors
\begin{equation}\label{equation:ex_seq_of_beh}
   \begin{tikzpicture}[label/.style={postaction={
        decorate,
        decoration={markings, mark=at position .5 with \node #1;}},
        mylabel/.style={thick, draw=none, align=center, minimum width=0.5cm, minimum height=0.5cm,fill=white}}]
        \coordinate (r) at (2,0);
        \node (A) {$0$};
        \node (B) at ($(A)+(r)$) {$\mathcal{B}( {_A}\phi, - )$};
        \node (C) at ($(B) + (r)$) {$\ff^{\oplus n}$};
        \node (D) at ($(C) + (r)$) {$\ff^{\oplus m}$.};
        \draw[->,thick] (A) to  (B);
        \draw[->,thick] (B) to (C);
        \draw[->,thick] (C) to node[above]{$A$} (D);
  \end{tikzpicture}
\end{equation}
Here, exactness means that this sequence of natural transformations yields an exact sequence of abelian groups for each evaluation at an $M \in R\Modl$.
\end{para}

\begin{lemma}[Malgrange isomorphism \cite{MalgrangeIsomorphism}]\label{lemma:malgrange_iso} Suppose given a quantifier-free pp formula ${_A}\phi$ defined by the matrix $A \in R^{m \times n}$.
Then we have a natural isomorphism of functors
\[
\Hom(\cokernel(A),-) \simeq \mathcal{B}( {_A}\phi, - ),
\]
i.e., the functor of trajectories defined by ${_A}\phi$ is representable by $\cokernel(A)$, the cokernel of the module homomorphism $R^{1 \times m} \xrightarrow{A} R^{1 \times n}$.
\end{lemma}
\begin{proof}
We have the following short exact sequence in $R\Modl$:
\begin{center}
       \begin{tikzpicture}[label/.style={postaction={
        decorate,
        decoration={markings, mark=at position .5 with \node #1;}},
        mylabel/.style={thick, draw=none, align=center, minimum width=0.5cm, minimum height=0.5cm,fill=white}}]
        \coordinate (r) at (3,0);
        \node (A) {$R^{1 \times m}$};
        \node (B) at ($(A)+(r)$) {$R^{1 \times n}$};
        \node (C) at ($(B) + (r)$) {$\cokernel( A )$};
        \node (D) at ($(C) + (r)$) {$0$.};
        \draw[->,thick] (A) to node[above]{$A$} (B);
        \draw[->,thick] (B) to (C);
        \draw[->,thick] (C) to (D);
        \end{tikzpicture}
\end{center}
Since hom functors a left exact, we obtain a short exact sequence of functors
\begin{center}
   \begin{tikzpicture}[label/.style={postaction={
        decorate,
        decoration={markings, mark=at position .5 with \node #1;}},
        mylabel/.style={thick, draw=none, align=center, minimum width=0.5cm, minimum height=0.5cm,fill=white}}]
        \coordinate (r) at (3.5,0);
        \node (A) {$0$};
        \node (B) at ($(A)+0.65*(r)$) {$\Hom(\cokernel(A),-)$};
        \node (C) at ($(B)+0.9*(r)$) {$\Hom(R^{1 \times n},-)$};
        \node (D) at ($(C) + 1.25*(r)$) {$\Hom(R^{1 \times m},-)$};
        \draw[->,thick] (A) to  (B);
        \draw[->,thick] (B) to (C);
        \draw[->,thick] (C) to node[above]{$\Hom(A,-)$} (D);
  \end{tikzpicture}
\end{center}
We can identify this exact sequence of functors with the one in \eqref{equation:ex_seq_of_beh} as follows:
\begin{center}
   \begin{tikzpicture}[label/.style={postaction={
        decorate,
        decoration={markings, mark=at position .5 with \node #1;}},
        mylabel/.style={thick, draw=none, align=center, minimum width=0.5cm, minimum height=0.5cm,fill=white}}]
        \coordinate (r) at (3.5,0);
        \coordinate (d) at (0,-1.5);
        \node (A) {$0$};
        \node (B) at ($(A)+0.65*(r)$) {$\Hom(\cokernel(A),-)$};
        \node (C) at ($(B)+0.9*(r)$) {$\Hom(R^{1 \times n},-)$};
        \node (D) at ($(C) + 1.25*(r)$) {$\Hom(R^{1 \times m},-)$};

        \node (A2) at ($(A)+(d)$) {$0$};
        \node (B2) at ($(A2)+0.65*(r)$) {$\mathcal{B}( {_A}\phi, - )$};
        \node (C2) at ($(B2)+0.9*(r)$) {$\ff^{\oplus n}$};
        \node (D2) at ($(C2) + 1.25*(r)$) {$\ff^{\oplus m}$};
        
        \draw[->,thick] (A) to  (B);
        \draw[->,thick] (B) to (C);
        \draw[->,thick] (C) to node[above]{$\Hom(A,-)$} (D);

        \draw[->,thick] (A2) to  (B2);
        \draw[->,thick] (B2) to (C2);
        \draw[->,thick] (C2) to node[above]{$A$} (D2);

        \draw[->,thick] (C) to node[right]{$\simeq$} (C2);
        \draw[->,thick] (D) to node[right]{$\simeq$} (D2);
        \draw[->,thick] (B) to node[right]{$\simeq$} (B2);
\end{tikzpicture}
\end{center}
The two vertical isomorphisms on the right hand side are given by the universal property of free modules.
The vertical isomorphism on the left hand side is induced by the functoriality of taking kernels.
\end{proof}

\begin{definition}
Let $A \in R^{m \times n}$.
Any module that represents $\mathcal{B}( {_A}\phi, - )$ is called a \textbf{system module} for $\mathcal{B}( {_A}\phi, - )$.
\end{definition}

\begin{para}
By Yoneda's lemma and \ref{lemma:malgrange_iso}, all system modules of a given functor of trajectories $\mathcal{B}( {_A}\phi, - )$ are isomorphic to the finitely presented module $\cokernel(A)$.
\end{para}

\begin{construction}\label{construction:presentation_of_pp_formula}
If $\phi$ is not quantifier-free, $\mathcal{B}( \phi, - )$ is not representable in general.
However, $\mathcal{B}( \phi, - )$ can be \emph{presented} as the cokernel of a natural transformation between representable functors. We give the construction of this presentation.

Let $\phi := {_B}\phi_{B'}$, where $B \in R^{m \times k}$, $B' \in R^{m \times (n-k)}$. We set $A := (B|B') \in R^{m \times n}$.
Then we obtain the following commutative diagram of functors with exact rows:
\begin{center}
   \begin{tikzpicture}[label/.style={postaction={
        decorate,
        decoration={markings, mark=at position .5 with \node #1;}},
        mylabel/.style={thick, draw=none, align=center, minimum width=0.5cm, minimum height=0.5cm,fill=white}}]
        \coordinate (r) at (2.5,0);
        \coordinate (d) at (0,-1.5);
        \node (A) {$0$};
        \node (B) at ($(A)+(r)$) {$\ff^{\oplus n-k}$};
        \node (C) at ($(B)+(r)$) {$\ff^{\oplus n}$};
        \node (D) at ($(C) + (r)$) {$\ff^{\oplus k}$};
        \node (E) at ($(D) + (r)$) {$0$};

        \node (A2) at ($(A)+(d)$) {$0$};
        \node (B2) at ($(A2)+(r)$) {$\mathcal{B}( {_{B'}}\phi, - )$};
        \node (C2) at ($(B2)+(r)$) {$\mathcal{B}( {_{A}}\phi, - )$};
        \node (D2) at ($(C2) + (r)$) {$\mathcal{B}( {_{B}}\phi{_{B'}}, - )$};
        \node (E2) at ($(D2) + (r)$) {$0$};
        
        \draw[->,thick] (A) to (B);
        \draw[->,thick] (B) to node[above]{$x' \mapsto \pmatcol{0}{x'}$} (C);
        \draw[->,thick] (C) to node[above]{$\pmatcol{x}{x'} \mapsto x$}(D);
        \draw[->,thick] (D) to (E);

        \draw[->,thick] (A2) to  (B2);
        \draw[->,thick] (B2) to (C2);
        \draw[->,thick] (C2) to (D2);
        \draw[->,thick] (D2) to (E2);

        \draw[<-left hook,thick] (C) to (C2);
        \draw[<-left hook,thick] (D) to (D2);
        \draw[<-left hook,thick] (B) to (B2);
\end{tikzpicture}
\end{center}
Here, the vertical arrows denote the canonical inclusions.
The verification of commutativity and exactness can be done componentwise.
When we apply the Malgrange isomorphism (Lemma \ref{lemma:malgrange_iso}) to the lower short exact sequence, we obtain our desired short exact sequence
\begin{center}
   \begin{tikzpicture}[label/.style={postaction={
        decorate,
        decoration={markings, mark=at position .5 with \node #1;}},
        mylabel/.style={thick, draw=none, align=center, minimum width=0.5cm, minimum height=0.5cm,fill=white}}]
        \coordinate (r) at (4,0);
        \coordinate (d) at (0,-1.5);

        \node (A2) {$0$};
        \node (B2) at ($(A2)+0.5*(r)$) {$\Hom( \cokernel( B' ), - )$};
        \node (C2) at ($(B2)+(r)$) {$\Hom( \cokernel( A ), - )$};
        \node (D2) at ($(C2) + 0.75*(r)$) {$\mathcal{B}( \phi, - )$};
        \node (E2) at ($(D2) + 0.35*(r)$) {$0$};
        
        \draw[->,thick] (A2) to  (B2);
        \draw[->,thick] (B2) to node[above]{$\Hom(\alpha,-)$} (C2);
        \draw[->,thick] (C2) to (D2);
        \draw[->,thick] (D2) to (E2);
\end{tikzpicture}
\end{center}
Here, $\alpha: \cokernel( B' ) \rightarrow \cokernel( A )$ is the unique morphism of finitely presented left $R$-modules that fits into the following commutative diagram:
\begin{equation}\label{equation:presentation_mor_of_beh}
   \begin{tikzpicture}[mylabel/.style={fill=white},baseline=(Rn)]
        \coordinate (r) at (3.5,0);
        \coordinate (d) at (0,-1.5);

        \node (Rml) {$R^{1 \times m}$};
        \node (Rmr) at ($(Rml)+(r)$) {$R^{1 \times m}$};
        \node (Rn) at ($(Rml)+(d)$) {$R^{1 \times n}$};
        \node (Rnk) at ($(Rn) + (r)$) {$R^{1 \times (n-k)}$};
        \node (cokA) at ($(Rn) + (d)$) {$\cokernel(A)$};
        \node (cokC) at ($(cokA) + (r)$) {$\cokernel(B')$};
        
        \draw[->,thick] (Rml) to node[above]{$\id$} (Rmr);
        \draw[->,thick] (Rn) to node[above]{$(x,x') \mapsto x'$} (Rnk);
        \draw[->,thick] (cokA) to node[above]{$\alpha$}(cokC);

        \draw[->,thick] (Rml) to node[left]{$A$} (Rn);
        \draw[->,thick] (Rmr) to node[left]{$B'$} (Rnk);
        
        \draw[->,thick] (Rn) to (cokA);
        \draw[->,thick] (Rnk) to (cokC);

\end{tikzpicture}
\end{equation}
\end{construction}

Motivated by the presentation constructed in \ref{construction:presentation_of_pp_formula}, we now describe a formal context for finitely presented functors, which was introduced by Auslander \cite{A}. 

\begin{definition}\label{definition:fp_functor}
Let $\AC$ be an additive category.
A functor of the form
\[
\mathcal{G}: \AC \longrightarrow \Ab
\]
is called \textbf{finitely presented} if there exist objects $M, N \in \AC$, a morphism $M \xrightarrow{\alpha} N$, and an exact sequence of functors 
\begin{equation}\label{equation:presentation_given_by_alpha}
   \begin{tikzpicture}[mylabel/.style={fill=white},baseline=(A)]
        \coordinate (r) at (4.5,0);
        \node (A) {$\Hom( N, - )$};
        \node (B) at ($(A)+(r)$) {$\Hom( M, - )$};
        \node (C) at ($(B) + 0.5*(r)$) {$\mathcal{G}$};
        \node (D) at ($(C) + 0.25*(r)$) {$0$.};
        \draw[->,thick] (A) to node[above]{$\Hom( \alpha, - )$} (B);
        \draw[->,thick] (B) to (C);
        \draw[->,thick] (C) to (D);
  \end{tikzpicture}
\end{equation}
We refer to the exact sequence in \eqref{equation:presentation_given_by_alpha} as a \textbf{presentation} of $\mathcal{G}$ given by the morphism $\alpha$.
We denote by $\AC\modl$ the category whose objects are given by finitely presented functors on $\AC$ and whose morphisms are given by natural transformations.
\end{definition}

\begin{remark}
{A priori, the natural transformations between two functors form a class. However, for the natural transformations between finitely presented functors, it easily follows from Yoneda's lemma that their natural transformations actually form a set. Thus, $\AC\modl$ is a (locally small) category.}
\end{remark}

\begin{para}\label{para:eval_at_a_module}
We are mainly interested in \ref{definition:fp_functor} in the case where $\AC$ is the additive category $R\modl$ of finitely presented $R$-modules, i.e., we are interested in the category
\[
R\modl\modl.
\]
First, we clarify its relationship to the category $R\Modl\modl$, i.e., finitely presented functors on the whole module category $R\Modl$.
We have a full and faithful inclusion of categories
\begin{align*}
R\modl\modl &\hookrightarrow R\Modl\modl
\end{align*}
that can be explained in two ways (see, e.g., \cite[Section 10]{AJI}):
\begin{enumerate}
    \item A presentation of a finitely presented functor $\mathcal{G} \in R\modl\modl$ given by a morphism $\alpha \in R\modl$, i.e., an exact sequence of functors of the form
    \begin{center}
   \begin{tikzpicture}[label/.style={postaction={
        decorate,
        decoration={markings, mark=at position .5 with \node #1;}},
        mylabel/.style={thick, draw=none, align=center, minimum width=0.5cm, minimum height=0.5cm,fill=white}}]
        \coordinate (r) at (5,0);
        \node (A) {$\Hom_{R\modl}( N, - )$};
        \node (B) at ($(A)+(r)$) {$\Hom_{R\modl}( M, - )$};
        \node (C) at ($(B) + 0.5*(r)$) {$\mathcal{G}$};
        \node (D) at ($(C) + 0.25*(r)$) {$0$,};
        \draw[->,thick] (A) to node[above]{$\Hom_{R\modl}( \alpha, - )$} (B);
        \draw[->,thick] (B) to (C);
        \draw[->,thick] (C) to (D);
  \end{tikzpicture}
\end{center}
    gives rise to a presentation of functors of the form
    \begin{center}
   \begin{tikzpicture}[label/.style={postaction={
        decorate,
        decoration={markings, mark=at position .5 with \node #1;}},
        mylabel/.style={thick, draw=none, align=center, minimum width=0.5cm, minimum height=0.5cm,fill=white}}]
        \coordinate (r) at (5,0);
        \node (A) {$\Hom_{R\Modl}( N, - )$};
        \node (B) at ($(A)+(r)$) {$\Hom_{R\Modl}( M, - )$};
        \node (C) at ($(B) + 0.5*(r)$) {$\mathcal{G}'$};
        \node (D) at ($(C) + 0.25*(r)$) {$0$};
        \draw[->,thick] (A) to node[above]{$\Hom_{R\Modl}( \alpha, - )$} (B);
        \draw[->,thick] (B) to (C);
        \draw[->,thick] (C) to (D);
  \end{tikzpicture}
\end{center}
    given by $\alpha$ interpreted as a morphism in the whole module category $R\Modl$.
    Then $\mathcal{G}' \in R\Modl\modl$ is the image of $\mathcal{G}$ under the full and faithful inclusion.
    \item A finitely presented functor $\mathcal{G} \in R\modl\modl$ is mapped to its extension by filtered colimits, i.e., to a functor of type $R\Modl \rightarrow \Ab$ with the following two properties which determine it uniquely up to natural isomorphism:
    \begin{enumerate}
        \item it coincides with $\mathcal{G}$ on $R\modl$,
        \item it commutes with filtered colimits.
    \end{enumerate}
\end{enumerate}
\end{para}

\begin{convention}
It follows from our discussion in \ref{para:eval_at_a_module} that we can evaluate a functor $\mathcal{G} \in R\modl\modl$ at an arbitrary module $M \in R\Modl$ by means of the second interpretation. We will tacitly make use of this second point of view of the category $R\modl\modl$ throughout the rest of our paper, i.e., we will regard $R\modl\modl$ as a full subcategory of $R\Modl\modl$ whenever it is convenient to do so.
\end{convention}

The following lemma summarizes \ref{construction:presentation_of_pp_formula} in the terminology of finitely presented functors.

\begin{lemma}[Malgrange isomorphism for a behavior with latent variables]\label{lemma:malgrange_beh}
Let ${_B}\phi_{B'}$ be the pp formula given by $B \in R^{m \times k}$, $B' \in R^{m \times (n-k)}$.
Then the functor of trajectories $\mathcal{B}( {_B}\phi_{B'}, - )$ lies in $R\modl\modl$. More precisely, it is finitely presented with a presentation given by an epimorphism $\alpha$ between finitely presented modules of type
\[
\cokernel( (B|B') ) \rightarrow \cokernel( B' ),
\]
in other words, we have a natural isomorphism
\[
\cokernel( \Hom( \alpha, - ) ) \simeq \mathcal{B}( {_B}\phi_{B'}, - ).
\]
\end{lemma}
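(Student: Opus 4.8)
The plan is to read the statement off Construction \ref{construction:presentation_of_pp_formula} directly, since essentially all of the work has already been carried out there; what remains is only to record the consequences in the language of Definition \ref{definition:fp_functor} and to check the one piece of data not made explicit in the Construction, namely that the connecting morphism $\alpha$ is an epimorphism.

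First I would recall the short exact sequence of functors produced at the end of Construction \ref{construction:presentation_of_pp_formula}: applying the Malgrange isomorphism (Lemma \ref{lemma:malgrange_iso}) to the lower exact row of the commutative diagram constructed there yields
\[
0 \rightarrow \Hom( \cokernel( B' ), - ) \xrightarrow{\ \Hom( \alpha, - )\ } \Hom( \cokernel( A ), - ) \rightarrow \mathcal{B}( {_B}\phi_{B'}, - ) \rightarrow 0,
\]
where $A = (B|B')$ and $\alpha \colon \cokernel( A ) \rightarrow \cokernel( B' )$ is the morphism of finitely presented left $R$-modules determined by the diagram \eqref{equation:presentation_mor_of_beh}. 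The right-hand map of this sequence exhibits $\mathcal{B}( {_B}\phi_{B'}, - )$ as the cokernel of $\Hom( \alpha, - )$, that is, $\cokernel( \Hom( \alpha, - ) ) \simeq \mathcal{B}( {_B}\phi_{B'}, - )$. Since $\cokernel( A )$ and $\cokernel( B' )$ are finitely presented left $R$-modules, the first two terms are representable by objects of $R\modl$, so this is precisely a presentation in the sense of Definition \ref{definition:fp_functor}; hence $\mathcal{B}( {_B}\phi_{B'}, - )$ lies in $R\modl\modl$. This settles the finite-presentation claim and the displayed natural isomorphism simultaneously.

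It therefore remains only to verify that $\alpha$ is an epimorphism. I would read this off the defining diagram \eqref{equation:presentation_mor_of_beh}: the horizontal map $R^{1 \times n} \rightarrow R^{1 \times (n-k)}$, $(x,x') \mapsto x'$, is a split surjection, and the cokernel projection $R^{1 \times (n-k)} \rightarrow \cokernel( B' )$ is surjective by construction, so their composite $R^{1 \times n} \twoheadrightarrow \cokernel( B' )$ is surjective. By commutativity of the lower square this composite equals $\alpha$ precomposed with the cokernel projection $R^{1 \times n} \twoheadrightarrow \cokernel( A )$; a composite being surjective forces its final factor $\alpha$ to be surjective, hence an epimorphism.

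There is no genuine obstacle here beyond bookkeeping: the substantive content — the componentwise exactness of the two rows and the identification of the presentation via Lemma \ref{lemma:malgrange_iso} — is already established in Construction \ref{construction:presentation_of_pp_formula}. The only point demanding a moment's care is the \emph{variance}: the morphism $\alpha$ points from $\cokernel( A ) = \cokernel( (B|B') )$ to $\cokernel( B' )$, so that $\Hom( \alpha, - )$ runs in the opposite direction, and one must confirm that this orientation matches the claimed type $\cokernel( (B|B') ) \rightarrow \cokernel( B' )$ of the presenting epimorphism before concluding.
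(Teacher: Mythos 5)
Your proposal is correct and takes essentially the same route as the paper: the paper gives no separate proof, presenting the lemma as a direct summary of Construction \ref{construction:presentation_of_pp_formula}, which is exactly what you do. Your explicit check that $\alpha$ is surjective (hence an epimorphism), and your resolution of the orientation of $\alpha$ in favor of diagram \eqref{equation:presentation_mor_of_beh} — note that the prose of the Construction states the type of $\alpha$ backwards relative to that diagram — supply the only details the paper leaves implicit.
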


\begin{remark}
If $\phi$ is quantifier-free, then $B'$ in \ref{lemma:malgrange_beh} is the empty matrix of type $R^{m \times 0}$ and $\cokernel( B' )$ is the zero module. In that case, the lemma specializes to the usual Malgrange isomorphism in \ref{lemma:malgrange_iso}.
\end{remark}

\section{The category of finitely presented functors on finitely presented modules}\label{section:category_rmodmod}

In this section, we discuss the universal property of $R\modl\modl$ and some of its direct consequences.

\begin{para}
We denote the category of all additive functors of type $R\modl \rightarrow \Ab$ by $R\modl\Modl$. Morphisms in this category are the natural transformations.
Since $R\modl$ is skeletally small, i.e., it is equivalent to a small category, $R\modl\Modl$ is locally small.
Moreover, $R\modl\Modl$ is an abelian category, kernels and cokernels of natural transformations can be computed componentwise.
By definition, the category $R\modl\modl$ is a full subcategory of $R\modl\Modl$.
We also warn the reader that $R\modl\Modl$ should not be confused with $R\Modl\modl$, the category which we used in \ref{para:eval_at_a_module}.
\end{para}

\begin{theorem}
The category $R\modl\modl$ is abelian.
Moreover, the canonical inclusion functor
\[
R\modl\modl \hookrightarrow R\modl\Modl
\]
is exact.
In particular, the kernel/cokernel/image of a natural transformation in $R\modl\modl$ can be computed componentwise.
\end{theorem}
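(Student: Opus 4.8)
The plan is to realize $R\modl\modl$ as a full subcategory of the functor category $R\modl\Modl$, which is abelian with kernels, cokernels and images all computed componentwise, and in which every representable functor $\Hom(M,-)$ is projective by Yoneda's lemma (evaluation at $M$ is exact and corepresented by $\Hom(M,-)$). Granting this, it suffices to prove the two closure statements that $R\modl\modl$ is closed under forming kernels and cokernels inside $R\modl\Modl$. Indeed, a full additive subcategory of an abelian category that is closed under kernels and cokernels is automatically abelian, with the inclusion exact: kernels and cokernels of a morphism in the subcategory are inherited from the ambient category by fullness, and the canonical coimage-to-image comparison, being an isomorphism in the ambient abelian category, remains one in the subcategory. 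Thus the entire theorem, including the ``componentwise'' assertion, reduces to the two closures, of which the substantive one is closure under kernels.

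Closure under cokernels holds over any additive source category, and I would dispatch it first. Given $\eta\colon\mathcal{G}\to\mathcal{H}$, choose presentations $\Hom(G_0,-)\to\mathcal{G}$ and $\Hom(H_0,-)\to\mathcal{H}$; projectivity of $\Hom(G_0,-)$ lets me lift the composite $\Hom(G_0,-)\to\mathcal{G}\to\mathcal{H}$ along the epimorphism $\Hom(H_0,-)\to\mathcal{H}$. A short componentwise diagram chase then exhibits $\cokernel(\eta)$ as the cokernel of a morphism between finite direct sums of representable functors, and since such direct sums are again representable ($\Hom(A,-)\oplus\Hom(B,-)\cong\Hom(A\oplus B,-)$), this is a presentation. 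Hence $\cokernel(\eta)\in R\modl\modl$.

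The crucial input for kernels, and the only point where the specific nature of $R\modl$ enters, is that $R\modl$ \emph{has} cokernels: for $f\colon M\to N$ in $R\modl$ the image $\image(f)$ is finitely generated, being the image of a finitely generated module, so $\cokernel(f)=N/\image(f)$ is finitely presented as the quotient of the finitely presented module $N$ by a finitely generated submodule. From this I obtain the key lemma computing the kernel of a morphism of representables: a homomorphism $\psi\colon N\to X$ satisfies $\psi\circ f=0$ precisely when it factors through the \emph{epic} cokernel projection $c\colon N\to\cokernel(f)$, and since $c$ is epi the map $\Hom(c,-)$ is a monomorphism onto this kernel. Therefore $\kernel\!\big(\Hom(f,-)\big)\cong\Hom(\cokernel(f),-)$ is itself representable.

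Closure under kernels then follows by applying this lemma twice. Given $\eta\colon\mathcal{G}\to\mathcal{H}$ with relation maps $\gamma\colon G_0\to G_1$ and $\delta\colon H_0\to H_1$ for the two presentations, lift $\eta$ to a map $\beta_0\colon H_0\to G_0$ so that $\eta$ is induced by precomposition with $\beta_0$. Assembling $\beta_0$ and $\delta$ into $j=\pmatcol{\beta_0}{-\delta}\colon H_0\to G_0\oplus H_1$, the lemma identifies those sections $\psi$ of $\Hom(G_0,-)$ whose class lies in $\kernel(\eta)$ as the image of $\Hom(c_1,-)$ for the induced map $c_1\colon G_0\to\cokernel(j)$; thus $\kernel(\eta)$ is the image of a morphism $\mu$ from a representable functor into $\mathcal{G}$. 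A second application of the lemma, now to $c_1$ together with $\gamma$ via $j'=\pmatcol{c_1}{-\gamma}$, presents $\kernel(\mu)$, and hence exhibits $\kernel(\eta)=\image(\mu)$ as the cokernel of a morphism between representables, so $\kernel(\eta)\in R\modl\modl$. The main obstacle is exactly this last reduction: organizing the bookkeeping so that the componentwise kernel of an arbitrary transformation is expressed through two cokernels taken in $R\modl$. Everything else is formal, and the essential ring-theoretic fact consumed is merely that $R\modl$ is closed under cokernels.
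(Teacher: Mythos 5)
Your proof is correct. Note that the paper itself offers no argument for this theorem—it simply cites \cite[Proposition 10.2.4]{PrestPSL}—and what you have written is essentially the standard proof found in that reference (going back to Auslander): reduce everything to the closure of $R\modl\modl$ under kernels and cokernels inside $R\modl\Modl$, handle cokernels formally via projectivity of representables, and handle kernels via the key lemma $\kernel(\Hom(f,-))\cong\Hom(\cokernel(f),-)$, which is precisely where the only ring-theoretic input enters, namely that $R\modl$ has cokernels (a cokernel of a map of finitely presented modules is finitely presented). Your bookkeeping is sound: the first application of the lemma to $j=\pmatcol{\beta_0}{-\delta}$ realizes $\kernel(\eta)$ as the image of a map $\mu$ from a representable into $\mathcal{G}$, the second application to $j'=\pmatcol{c_1}{-\gamma}$ realizes $\kernel(\mu)$ as the image of a map between representables, and hence $\kernel(\eta)\cong\cokernel\bigl(\Hom(\cokernel(j'),-)\rightarrow\Hom(\cokernel(j),-)\bigr)$ is finitely presented; likewise your reduction of abelianness of the full subcategory to the coimage-to-image comparison being an isomorphism is a correct and complete justification of the componentwise assertion.
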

\begin{proof}
See, e.g., \cite[Proposition 10.2.4]{PrestPSL}.
\end{proof}

\begin{para}
Any ring $R$ can be regarded as a category with a single object $\bullet$, whose endomorphisms are given by the elements in $R$, and whose (post)composition is given by ring multiplication. Diagrammatically, this means that composition in that category works as follows:
\[
(\bullet \xrightarrow{r} \bullet \xrightarrow{s} \bullet) = (\bullet \xrightarrow{sr} \bullet)
\]
for all $r,s \in R$. By abuse of notation, we denote this category again by $R$.
\end{para}

\begin{para}\label{para:R_to_Rmodmod}
We have a \emph{canonical} functor
\begin{align*}
R &\rightarrow R\modl\modl \\ 
(\bullet \xrightarrow{r} \bullet) &\mapsto (\ff \xrightarrow{r} \ff).
\end{align*}
which is full and faithful by \ref{example:end_of_ff}.
\end{para}

\begin{theorem}[Universal property of $R\modl\modl$]\label{theorem:up_Rmodmod}
Let $R$ be a ring.
The category $R\modl\modl$ is the \emph{free abelian category} generated by $R$, or more precisely, generated by the forgetful functor $\ff$ and its endomorphisms. Concretely: if $\AC$ is an abelian category and $R \xrightarrow{M} \AC$ a functor, then there is a unique (up to natural isomorphism) exact functor $\ev{M}: R\modl\modl \rightarrow \AC$ such that the following diagram commutes (up to natural isomorphism):
\begin{equation}\label{equation:up_rmodmod}
   \begin{tikzpicture}[mylabel/.style={fill=white}, baseline=($(R)+0.5*(d)$)]
    \coordinate (r) at (4,0);
    \coordinate (d) at (0,-2);
    \node (R) {$R$};
    \node (Free) at ($(R)+(r)$) {$R\modl\modl$};
    \node (A) at ($(Free) + (d)$) {$\AC$};
    \draw[->,thick] (R) to (Free);
    \draw[->,thick] (R) to node[below]{$M$} (A);
    \draw[->,thick,dashed] (Free) to node[right]{$\ev{M}$} (A);
    \end{tikzpicture}
\end{equation}
\end{theorem}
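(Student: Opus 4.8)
The plan is to unwind the hypothesis and then treat uniqueness and existence separately, with a single combinatorial lemma doing most of the work for both. A functor $R \xrightarrow{M} \AC$ is the same datum as an object $M \in \AC$ together with a ring homomorphism $R \to \End_{\AC}(M)$, i.e. an object of $\AC$ carrying a left $R$-action; under the triangle \eqref{equation:up_rmodmod} the constraint is $\ev{M}(\ff) \cong M$ with $\ev{M}(\ff \xrightarrow{r} \ff) = (M \xrightarrow{r} M)$, and hence $\ev{M}(\ff^{\oplus n} \xrightarrow{A} \ff^{\oplus m}) = (M^{\oplus n} \xrightarrow{A} M^{\oplus m})$ for every matrix $A$ by additivity and Example \ref{example:end_of_ff}.

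The key step (call it the generation lemma) is that $R\modl\modl$ is generated by $\ff$ under the abelian operations: every object is, up to isomorphism, built from the free representables $\ff^{\oplus n}$ by finitely many kernels and cokernels of matrix-induced natural transformations, and every morphism is likewise induced by such data. Indeed, by definition every $\mathcal{G} \in R\modl\modl$ is a cokernel $\cokernel(\Hom(\alpha,-))$ for some $\alpha \colon P \to Q$ in $R\modl$; choosing free presentations $R^{1\times p_1} \xrightarrow{A_P} R^{1\times p_0} \to P \to 0$ and likewise for $Q$, the left exactness of $\Hom$ used in Lemma \ref{lemma:malgrange_iso} exhibits each representable as a kernel $\Hom(P,-) \cong \kernel(\ff^{\oplus p_0} \xrightarrow{A_P} \ff^{\oplus p_1})$ of a matrix transformation. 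Lifting $\alpha$ to a chain map of these free presentations realizes $\Hom(\alpha,-)$ as the morphism induced on kernels, so $\mathcal{G}$ and its presentation morphism are built from $\ff$ as claimed. The analogous lifting argument, using that representables are projective in $R\modl\modl$ (Yoneda plus exactness of evaluation), yields the same for an arbitrary morphism of finitely presented functors.

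Uniqueness is then immediate: an exact functor preserves kernels, cokernels, and finite direct sums, so any two exact functors satisfying \eqref{equation:up_rmodmod} agree, via a canonical natural isomorphism, on the free representables and matrix transformations, and the generation lemma propagates this isomorphism uniquely to all objects and morphisms. For existence I would construct $\ev{M}$ by transporting the generation recipe into $\AC$. Concretely, I first define the internal $M$-dual $[-,M]\colon (R\modl)^{\op} \to \AC$ by $[P,M] := \kernel(M^{\oplus p_0} \xrightarrow{A_P} M^{\oplus p_1})$, where the entries of $A_P$ act through the $R$-action on $M$; this is the formal analogue of $\Hom_R(P,M)$ and is well defined and functorial by the comparison theorem for free presentations. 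I then set $\ev{M}(\mathcal{G}) := \cokernel([Q,M] \xrightarrow{[\alpha,M]} [P,M])$ for a presentation $\mathcal{G} = \cokernel(\Hom(\alpha,-))$, and define $\ev{M}$ on morphisms via their lifts to presentations. By construction $\ev{M}(\ff) = M$ with the correct $R$-action, so the triangle commutes; note that when $\AC = \Ab$ and $M$ is a genuine module, this $\ev{M}$ is nothing but evaluation at $M$ as in \ref{para:eval_at_a_module}, a useful sanity check.

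The main obstacle is the verification that this $\ev{M}$ is well defined and exact. Independence of all choices (the presentation $\alpha$ of $\mathcal{G}$, the free presentations of $P$ and $Q$, and the chain lifts) follows from the usual homotopy and comparison arguments, internalized in $\AC$: two lifts are chain homotopic and a homotopy factors through images, hence induces the same map after passing to the relevant kernels and cokernels, while projectivity of representables lets one compare any two presentations of $\mathcal{G}$ through a common refinement. Exactness is the delicate point: given a short exact sequence in $R\modl\modl$, I would use the horseshoe lemma to choose compatible presentations by representables with split exact rows, apply the additive functor $[-,M]$ (which preserves those split rows), and run the snake lemma on the resulting cokernels. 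Right exactness of $\ev{M}$ is then automatic, and the force of the argument is left exactness, for which one exploits the freedom in choosing the presentations (guaranteed by the well-definedness already established) to realize the relevant kernels and make the connecting morphism vanish. Since this is precisely the abelianization of an additive category, one may alternatively invoke the classical universal property of the category of finitely presented functors (Freyd, Adelman, Beligiannis) to obtain existence and exactness at once.
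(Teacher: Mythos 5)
Your proposal takes a genuinely different route from the paper: the paper does not prove this theorem directly at all, but cites Freyd's existence theorem for free abelian categories and Krause's treatment of $R\modl\modl$, so only your closing fallback sentence (invoking the classical universal property) coincides with the paper's actual proof. Your main line is a self-contained construction, and most of its architecture is sound: the generation lemma (every object is a cokernel of a map between kernels of matrix transformations between the $\ff^{\oplus n}$), uniqueness via exactness plus generation, the definition of $[-,M]$ and of $\ev{M}$ via presentations, well-definedness via comparison and homotopy arguments, and the right-exactness half of the horseshoe-plus-snake argument can all be carried out with the tools you name. In particular, your left-exactness lemma for $[-,M]$ itself — that a cokernel sequence $P \to Q \to C \to 0$ in $R\modl$ goes to a kernel sequence $0 \to [C,M] \to [Q,M] \to [P,M]$ in $\AC$ — does follow from "exploiting the freedom in presentations" (compute $[C,M]$ from the mapping-cone presentation and invoke well-definedness).

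The genuine gap is in the step you yourself flag as carrying the force of the theorem: the vanishing of the connecting morphism, for which your stated mechanism (choice of presentations) is not sufficient. Concretely, in the horseshoe diagram for $0 \to \mathcal{F} \to \mathcal{G} \to \mathcal{H} \to 0$ built from presentations $\alpha_F\colon P_F \to Q_F$ and $\alpha_H\colon P_H \to Q_H$, the middle vertical map has a single off-diagonal entry $\Hom(w,-)$ for some module map $w\colon P_F \to Q_H$, and the connecting morphism in $\AC$ is $\delta_M = \pi_M \circ [w,M] \circ \iota_M$, where $\iota_M$ identifies (by your left-exactness lemma) with $[q,M]$ for $q\colon Q_H \twoheadrightarrow \cokernel(\alpha_H)$, so $\delta_M = \pi_M \circ [qw,M]$. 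The freedom in the horseshoe data only changes $w$ by summands $s \circ \alpha_F$ with $s\colon Q_F \to Q_H$; forcing $qw = 0$ by such a change would require lifting a map $Q_F \to \cokernel(\alpha_H)$ through $q$, i.e.\ projectivity of $Q_F$, which fails for general finitely presented modules. What actually closes the gap is a transfer argument missing from your sketch: in $R\modl\modl$ the corresponding connecting morphism $\delta = \pi \circ \Hom(qw,-)$ is zero, because there the bottom row is exact; since $\Hom(\cokernel(\alpha_H),-)$ is projective and Yoneda is fully faithful, $\delta = 0$ is equivalent to an equation of module maps $qw = c \circ \alpha_F$ for some $c\colon Q_F \to \cokernel(\alpha_H)$; applying the well-defined contravariant functor $[-,M]$ transports this equation to $\AC$ and yields $\delta_M = \pi_M \circ [\alpha_F,M] \circ [c,M] = 0$, since $\pi_M$ is the cokernel projection of $[\alpha_F,M]$. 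With this inserted your direct proof is complete; without it, only your final appeal to Freyd--Adelman--Beligiannis — which is exactly what the paper does — carries the statement.
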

\begin{proof}
Peter Freyd was the first who proved the existence of free abelian categories in \cite[Theorem 4.1]{FreydRep}.
A proof that directly applies to $R\modl\modl$ can for example be found in \cite[Universal Property 2.10]{KrauseLocFP}.
\end{proof}

\begin{para}\label{para:modules_as_functors}
The notation $\ev{M}$ in \ref{theorem:up_Rmodmod} suggests that we are evaluating the functor $\mathcal{G} \in R\modl\modl$ at $M: R \rightarrow \AC$.
We explain this notation in the case $\AC = \Ab$.
First, we note that there is a equivalence of categories
\begin{equation}\label{equation:equiv_modules_functors}
R\Modl \simeq \Hom( R, \Ab )
\end{equation}
where $\Hom( R, \Ab )$ is the category of all additive functors $R \rightarrow \Ab$ with natural transformations as morphisms.
This equivalence is given by sending an $R$-module $M$ to the functor $R \rightarrow \Ab: (\bullet \xrightarrow{r} \bullet) \mapsto (\ff(M) \xrightarrow{r} \ff(M))$. By abuse of notation, we also write $M: R \rightarrow \Ab$ for this functor.

Second, as we explained in \ref{para:eval_at_a_module}, we can think about $\mathcal{G} \in R\modl\modl$ as a functor of type $R\Modl \rightarrow \Ab$. In particular, we can evaluate it at any $M \in R\Modl$, and evaluation at $M$ yields an exact functor
\begin{equation}\label{equation:eval_functor}
R\modl\modl \rightarrow \Ab: \mathcal{G} \mapsto \mathcal{G}(M)
\end{equation}
since exactness in a functor category can be tested componentwise.
By the universal property of $R\modl\modl$, this evaluation functor is uniquely determined by its restriction to $R$ along the canonical functor of type $R \rightarrow R\modl\modl$. But this restriction is exactly given by the functor $M: R \rightarrow \Ab$ corresponding to the module $M$ under the equivalence in \eqref{equation:equiv_modules_functors}. In other words, $\ev{M}$ equals the evaluation functor in \eqref{equation:eval_functor}.

We endorse this point of view by introducing the following notation: for every $\mathcal{G} \in R\modl\modl$ and every $M: R \rightarrow \AC$, we set
\[
\mathcal{G}(M) := \ev{M}(\mathcal{G}).
\]
\end{para}

\begin{ex}
Let $\phi$ be a pp formula over $R$ and let $\mathcal{B}( \phi, - )$ be its corresponding functor of trajectories.
By \ref{lemma:malgrange_beh}, $\mathcal{B}( \phi, - )$ can be regarded as an object in $R\modl\modl$.
Let $M \in R\Modl$. Then, we can regard $M$ as a functor $M: R \rightarrow \Ab$, and as such, it triggers the universal property of $R\modl\modl$ and yields an exact functor $\ev{M}: R\modl\modl \rightarrow \Ab$.
Our explanation in \ref{para:modules_as_functors} shows that 
\[
\ev{M}(\mathcal{B}( \phi, - )) \cong \mathcal{B}( \phi, M ).
\]
\end{ex}

\begin{para}\label{para:compute_ev}
We explain how to compute $\ev{M}$ (up to natural isomorphism) in the case where $M: R \rightarrow \AC$ is an arbitrary additive functor into an abelian category $\AC$.
We proceed step by step in our explanation.
\begin{itemize}
\item For $r \in R$, we have 
\[
\ev{M}(\ff \xrightarrow{r} \ff) \cong \left( M( \bullet ) \xrightarrow{M( r ) } M( \bullet )\right)
\]
directly from the commutativity of the diagram in \eqref{equation:up_rmodmod}. 
\item For a matrix $A = (a_{ij})_{ij} \in R^{m \times n}$, we have
\[
\ev{M}(\ff^{\oplus n} \xrightarrow{A} \ff^{\oplus m}) \cong \left( M( \bullet )^{\oplus n} \xrightarrow{ (M( a_{ij} ))_{ij} } M( \bullet )^{\oplus m})\right)
\]
since $\ev{M}$ is an additive functor.
\item For $N \in R\modl$ presented by the matrix $A \in R^{m \times n}$, we have
\[
\Hom(N,M) = 
\ev{M}( \Hom(N,-) ) \cong \kernel( \ev{M}(\ff^{\oplus n} \xrightarrow{A} \ff^{\oplus m}) )
\]
since $\ev{M}$ respects kernels and since we have an exact sequence
\begin{center}
       \begin{tikzpicture}[mylabel/.style={fill=white}]
        \coordinate (r) at (3,0);
        \node (A) {$0$};
        \node (B) at ($(A)+(r)$) {$\Hom(N,-)$};
        \node (C) at ($(B) + (r)$) {$\ff^{\oplus n}$};
        \node (D) at ($(C) + (r)$) {$\ff^{\oplus m}$.};
        \draw[->,thick] (A) to (B);
        \draw[->,thick] (B) to (C);
        \draw[->,thick] (C) to node[above]{$A$}(D);
        \end{tikzpicture}
    \end{center}
\item Let $\alpha: N \rightarrow N' \in R\modl$. By lifting $\alpha$ to presentations, we obtain a commutative diagram in $R\modl$ with exact rows of the form
\begin{equation}\label{equation:module_presentation}
       \begin{tikzpicture}[mylabel/.style={fill=white}, baseline = ($(A) + 0.5*(d)$)]
        \coordinate (r) at (3,0);
        \coordinate (d) at (0,-2);
        \node (A) {$R^{1 \times m}$};
        \node (B) at ($(A)+(r)$) {$R^{1 \times n}$};
        \node (C) at ($(B) + (r)$) {$N$};
        \node (D) at ($(C) + (r)$) {$0$};

        \node (A2) at ($(A) + (d)$){$R^{1 \times m'}$};
        \node (B2) at ($(A2)+(r)$) {$R^{1 \times n'}$};
        \node (C2) at ($(B2) + (r)$) {$N'$};
        \node (D2) at ($(C2) + (r)$) {$0$};
        
        \draw[->,thick] (A) to node[above]{$A$} (B);
        \draw[->,thick] (B) to (C);
        \draw[->,thick] (C) to (D);

        \draw[->,thick] (A2) to node[above]{$A'$} (B2);
        \draw[->,thick] (B2) to (C2);
        \draw[->,thick] (C2) to (D2);

        \draw[->,thick] (A) to node[left]{$B'$} (A2);
        \draw[->,thick] (B) to node[left]{$B$}(B2);
        \draw[->,thick] (C) to node[left]{$\alpha$}(C2);
        \end{tikzpicture}
    \end{equation}
    where $A \in R^{m \times n}$, $A' \in R^{m' \times n'}$, $B \in R^{n \times n'}$, $B' \in R^{m \times m'}$.
    Now, let 
    \[
    \mathcal{G} := \cokernel( \Hom( \alpha, - ) ) \in R\modl\modl.
    \]
    Then \eqref{equation:module_presentation} gives rise the following commutative diagram in $R\modl\modl$ with exact rows and columns:
    \begin{equation}\label{equation:res_of_G_in_rmodmod}
    \begin{tikzpicture}[mylabel/.style={fill=white}, baseline = (A)]
        \coordinate (r) at (3,0);
        \coordinate (d) at (0,-2);
        \node (A) {$\ff^{\oplus m}$};
        \node (B) at ($(A)+(r)$) {$\ff^{\oplus n}$};
        \node (C) at ($(B) + (r)$) {$\Hom(N,-)$};
        \node (D) at ($(C) + (r)$) {$0$};

        \node (A2) at ($(A) + (d)$){$\ff^{\oplus m'}$};
        \node (B2) at ($(A2)+(r)$) {$\ff^{\oplus n'}$};
        \node (C2) at ($(B2) + (r)$) {$\Hom(N',-)$};
        \node (D2) at ($(C2) + (r)$) {$0$};

        \node (G) at ($(C) - 0.75*(d)$) {$\mathcal{G}$};

        \node (Gz) at ($(G) - 0.75*(d)$) {$0$};

        \draw[->,thick] (C) to (G);
        \draw[->,thick] (G) to (Gz);
        
        \draw[<-,thick] (A) to node[above]{$A$} (B);
        \draw[<-,thick] (B) to (C);
        \draw[<-,thick] (C) to (D);

        \draw[<-,thick] (A2) to node[above]{$A'$} (B2);
        \draw[<-,thick] (B2) to (C2);
        \draw[<-,thick] (C2) to (D2);

        \draw[<-,thick] (A) to node[right]{$B'$} (A2);
        \draw[<-,thick] (B) to node[right]{$B$}(B2);
        \draw[<-,thick] (C) to node[right]{$\Hom(\alpha,-)$}(C2);
        \end{tikzpicture}
    \end{equation}
    Applying the exact functor $\ev{M}$ to the diagram in \eqref{equation:res_of_G_in_rmodmod} yields the following commutative diagram in $\AC$ with exact rows and columns:
    \begin{equation}\label{equation:how_to_compute_eval}
    \begin{tikzpicture}[mylabel/.style={fill=white}, baseline = (G)]
        \coordinate (r) at (4,0);
        \coordinate (d) at (0,-2.25);
        \node (A) {$M(\bullet)^{\oplus m}$};
        \node (B) at ($(A)+(r)$) {$M(\bullet)^{\oplus n}$};
        \node (C) at ($(B) + 0.85*(r)$) {$\Hom(N,M)$};
        \node (D) at ($(C) + 0.5*(r)$) {$0$};

        \node (A2) at ($(A) + (d)$){$M(\bullet)^{\oplus m'}$};
        \node (B2) at ($(A2)+(r)$) {$M(\bullet)^{\oplus n'}$};
        \node (C2) at ($(B2) + 0.85*(r)$) {$\Hom(N',M)$};
        \node (D2) at ($(C2) + 0.5*(r)$) {$0$};

        \node (G) at ($(C) - 0.75*(d)$) {$\mathcal{G}(M)$};

        \node (Gz) at ($(G) - 0.75*(d)$) {$0$};

        \draw[->,thick] (C) to (G);
        \draw[->,thick] (G) to (Gz);
        
        \draw[<-,thick] (A) to node[above]{$\ev{M}(A)$} (B);
        \draw[<-,thick] (B) to (C);
        \draw[<-,thick] (C) to (D);

        \draw[<-,thick] (A2) to node[above]{$\ev{M}(A')$} (B2);
        \draw[<-,thick] (B2) to (C2);
        \draw[<-,thick] (C2) to (D2);

        \draw[<-,thick] (A) to node[right]{$\ev{M}(B')$} (A2);
        \draw[<-,thick] (B) to node[right]{$\ev{M}(B)$}(B2);
        \draw[<-,thick] (C) to node[right]{$\Hom(\alpha,M)$}(C2);
        \end{tikzpicture}
    \end{equation}
    From this last diagram \eqref{equation:how_to_compute_eval}, we can now read off how to compute $\mathcal{G}(M)$.
\end{itemize}
\end{para}

\begin{ex}[Contravariant defect]\label{example:defect}
We have a functor
\[
M: R \rightarrow (R\Modl)^{\op}
\]
that maps the unique object $\bullet$ of $R$ (regarded as a category) to the free left $R$-module $R^{1 \times 1}$ of rank $1$, and $r \in R$ to the morphism of left modules induced by right multiplication with $r$. Note that this assignment is contravariant, which is why we take the opposite category of $R\Modl$ in our definition of $M$.

Let $\mathcal{G} \in R\modl\modl$ have a presentation given by $\alpha: N \rightarrow N' \in R\modl$.
By \ref{para:compute_ev}, we can compute 
\[
\ev{M}(\mathcal{G}) \cong \kernel( \alpha ) 
\]
where we take the kernel in $R\Modl$.
Thus, the functor
\[
\ev{M}: R\modl\modl \rightarrow (R\Modl)^{\op}
\]
is given by taking the so-called \textbf{(contravariant) defect} of $\mathcal{G}$ \cite{Russell16}, which we also denote by
\[
\defect(\mathcal{G}) := \ev{M}( \mathcal{G} ).
\]
It will play a crucial role for the module-behavior duality in \ref{corollary:beh_rmodop_equivalence}.
\end{ex}

\begin{ex}[Contravariant defect of a pp formula]
Let $\phi := {_B}\phi_{B'}$ be the pp formula given by $B \in R^{m \times k}$, $B' \in R^{m \times (n-k)}$, and set $A := (B|B')$. We want to compute
\[
\defect( \mathcal{B}( \phi, - ) ).
\]
By the extended version of the Malgrange isomorphism (Lemma \ref{lemma:malgrange_beh}), we have
\[
\cokernel( \Hom( \alpha, - ) ) \simeq \mathcal{B}( \phi, - )
\]
for $\alpha$ the morphism in the diagram \eqref{equation:presentation_mor_of_beh}. 
By \ref{example:defect} we need to compute $\kernel( \alpha )$.
For this, we take a look at the following extension of the diagram in \eqref{equation:presentation_mor_of_beh}:
\begin{center}
   \begin{tikzpicture}[mylabel/.style={fill=white}]
        \coordinate (r) at (2.5,0);
        \coordinate (d) at (0,-1.5);

        \node (Rml) {$R^{1 \times m}$};
        \node (Rmr) at ($(Rml)+(r)$) {$R^{1 \times m}$};
        \node (Rn) at ($(Rml)+(d)$) {$R^{1 \times n}$};
        \node (Rnk) at ($(Rn) + (r)$) {$R^{1 \times (n-k)}$};
        \node (cokA) at ($(Rn) + (d)$) {$\cokernel(A)$};
        \node (cokC) at ($(cokA) + (r)$) {$\cokernel(B')$};

        \node (OL) at ($(Rml) - (r)$) {$0$};
        \node (Rk) at ($(Rn) - (r)$) {$R^{1 \times k}$};
        \node (Rk2) at ($(Rk) + (d)$) {$R^{1 \times k}$};
        \node (z25) at ($(Rmr) + (r)$) {$0$};
        \node (z35) at ($(Rnk) + (r)$) {$0$};
        \node (z21) at ($(OL) - (r)$) {$0$};
        \node (z31) at ($(Rk) - (r)$) {$0$};
        \node (kerBp) at ($(Rmr) - (d)$) {$\kernel( B')$};
        \node (kerA) at ($(Rml) - (d)$) {$\kernel( A )$};
        \node (z12) at ($(OL) - (d)$) {$0$};
        
        \draw[->,thick] (Rml) to node[above]{$\id$} (Rmr);
        \draw[->,thick] (Rn) to (Rnk);
        \draw[->,thick] (cokA) to node[above]{$\alpha$}(cokC);

        \draw[->,thick] (Rml) to node[left]{$A$} (Rn);
        \draw[->,thick] (Rmr) to node[left]{$B'$} (Rnk);
        
        \draw[->,thick] (Rn) to (cokA);
        \draw[->,thick] (Rnk) to (cokC);

        \draw[->,thick] (z12) to (kerA);
        \draw[->,thick] (kerA) to (kerBp);
        \draw[->,thick] (Rk2) to (cokA);
        \draw[->,thick] (z21) to (OL);
        \draw[->,thick] (OL) to (Rml);
        \draw[->,thick] (Rmr) to (z25);

        \draw[->,thick] (z31) to (Rk);
        \draw[->,thick] (Rk) to (Rn);
        \draw[->,thick] (Rnk) to (z35);

        \draw[->,thick] (z12) to (OL);
        \draw[->,thick] (kerA) to (Rml);
        \draw[->,thick] (kerBp) to (Rmr);
        \draw[->,thick] (OL) to (Rk);
        \draw[->,thick] (Rk) to (Rk2);

        \draw[->,thick,rounded corners,dashed] ($(kerBp.east)$) -| node[mylabel,yshift=-2em]{$\partial$} ($(Rmr.east) + 0.3*(d) + 0.3*(r)$) -| ($(Rk.west) - 0.1*(r)$) |-  ($(Rk2.west)$);
\end{tikzpicture}
\end{center}
The second and third row of this diagram are short exact sequences.
Thus, we can apply the snake lemma, from which we get the morphism $\partial$ together with a long exact sequence
\begin{center}
   \begin{tikzpicture}[mylabel/.style={fill=white}]
        \coordinate (r) at (2,0);;

        \node (A) {$0$};
        \node (B) at ($(A)+0.75*(r)$) {$\kernel(A)$};
        \node (C) at ($(B)+(r)$) {$\kernel(B')$};
        \node (D) at ($(C)+(r)$) {$R^{1 \times k}$};
        \node (E) at ($(D)+(r)$) {$\cokernel(A)$};
        \node (F) at ($(E)+(r)$) {$\cokernel(B')$};
        \node (G) at ($(F)+0.75*(r)$) {$0$.};
        
        \draw[->,thick] (A) to (B);
        \draw[->,thick] (B) to (C);
        \draw[->,thick] (C) to node[above]{$\partial$}(D);
        \draw[->,thick] (D) to (E);
        \draw[->,thick] (E) to node[above]{$\alpha$} (F);
        \draw[->,thick] (F) to (G);
\end{tikzpicture}
\end{center}
From this long exact sequence, we can deduce
\[
\kernel( \alpha ) \cong \frac{R^{1 \times k}}{\image( \partial )}.
\]
By chasing elements, we can explicitly compute $\image( \partial )$ as
\[
\image( \partial ) = \{ y \cdot B \mid y \in \kernel(B') \} \subseteq R^{1 \times k}.
\]
Thus,
\[
\defect( \mathcal{B}( {_B}\phi_{B'}, - ) ) \cong \frac{R^{1 \times k}}{\{ y \cdot B \mid y \in \kernel(B') \}}.
\]
\end{ex}

\begin{remark}\label{remark:left_rop_modules}
In order to prepare the duality theorem for $R\modl\modl$, we remark that working with right $R$-modules is equivalent to working with left $R^{\op}$-modules, where $R^{\op}$ denotes the opposite ring of $R$. In the context of finitely presented modules, this means that we have an equivalence
\[
\modr R \simeq R^{\op}\modl.
\]
\end{remark}

\begin{para}\label{para:can_modrmodl}
Similar to \ref{para:R_to_Rmodmod}, we have a \emph{canonical} functor of type
\[
R \rightarrow ((\modr R)\modl)^{\op}
\]
that maps the unique object $\bullet$ of $R$ to the forgetful functor $\ff$ of type $\modr R \rightarrow \Ab$.
It can be obtained as follows:
first, we take the canonical functor of type $R^{\op} \rightarrow R^{\op}\modl\modl$ from \ref{para:R_to_Rmodmod}, where we use $R^{\op}$ instead of $R$.
Second, we reinterpret this functor by means of \ref{remark:left_rop_modules} and obtain a functor of type $R^{\op} \rightarrow (\modr R)\modl$. Last, we take the opposite functor.
\end{para}

\begin{theorem}[Duality of $R\modl\modl$ and $(\modr R)\modl$]\label{theorem:agj_duality}
Let
\[
M: R \rightarrow ((\modr R)\modl)^{\op}.
\]
denote the canonical functor, see \ref{para:can_modrmodl}.
Then
\[
\ev{M}: R\modl\modl \rightarrow ((\modr R)\modl)^{\op}
\]
is an equivalence of categories.
\end{theorem}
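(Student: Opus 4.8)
The plan is to apply the universal property of $R\modl\modl$ (Theorem~\ref{theorem:up_Rmodmod}) twice, exploiting that forming opposite categories turns a free abelian category on $R^{\op}$ into a free abelian category on $R$. First I would recall from \ref{remark:left_rop_modules} that $(\modr R)\modl \simeq R^{\op}\modl\modl$, so Theorem~\ref{theorem:up_Rmodmod} applied to the ring $R^{\op}$ identifies $(\modr R)\modl$ as the free abelian category generated by $R^{\op}$, with canonical functor $\kappa\colon R^{\op} \to (\modr R)\modl$ (the functor of \ref{para:can_modrmodl} before its final opposite is taken). Writing $\iota\colon R \to R\modl\modl$ for the canonical functor of \ref{para:R_to_Rmodmod}, the functor $M$ in the statement is exactly $\kappa^{\op}$ under the identification $(R^{\op})^{\op} = R$.

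The key step is to show that $((\modr R)\modl)^{\op}$, together with $M = \kappa^{\op}\colon R \to ((\modr R)\modl)^{\op}$, itself satisfies the universal property of the free abelian category on $R$. Given an abelian category $\AC$ and a functor $G\colon R \to \AC$, I pass to opposites: $\AC^{\op}$ is again abelian, and $G^{\op}\colon R^{\op} \to \AC^{\op}$ triggers the universal property of $(\modr R)\modl$, producing a unique exact functor $\widetilde{G^{\op}}\colon (\modr R)\modl \to \AC^{\op}$ with $\widetilde{G^{\op}} \circ \kappa \cong G^{\op}$. Taking opposites back yields an exact functor $((\modr R)\modl)^{\op} \to \AC$ extending $M$ along $G$; here I use that an additive functor between abelian categories is exact precisely when its opposite is, and that additivity and natural isomorphisms are self-dual notions. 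Uniqueness transfers verbatim from the uniqueness clause for $(\modr R)\modl$.

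With both $(R\modl\modl, \iota)$ and $(((\modr R)\modl)^{\op}, M)$ recognized as free abelian categories on $R$, the theorem follows from the standard rigidity of universal objects. The universal property of $R\modl\modl$ produces the unique exact $\ev{M}$ with $\ev{M}\circ\iota \cong M$, while the universal property just established for $((\modr R)\modl)^{\op}$ produces a unique exact $Q$ with $Q\circ M \cong \iota$. Then $Q\circ\ev{M}$ and $\id_{R\modl\modl}$ are both exact functors extending $\iota$ along $\iota$, so $Q\circ\ev{M}\cong\id$ by the uniqueness clause of Theorem~\ref{theorem:up_Rmodmod}, and symmetrically $\ev{M}\circ Q\cong\id$. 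Hence $\ev{M}$ is an equivalence with quasi-inverse $Q$.

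I expect the main obstacle to lie in the bookkeeping of the self-duality argument rather than in any computation: one must check carefully that ``free abelian category'' is a self-dual notion under $(-)^{\op}$, and that the specific functor $M$ of \ref{para:can_modrmodl} is literally $\kappa^{\op}$, so that the abstract equivalence furnished by rigidity coincides with the named functor $\ev{M}$. Once the exactness and additivity dualities are verified, the remainder is formal.
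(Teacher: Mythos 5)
Your proposal is correct and takes essentially the same route as the paper: the quasi-inverse $Q$ you obtain (apply the universal property of $(\modr R)\modl \simeq R^{\op}\modl\modl$ to the opposite of the canonical functor $R \rightarrow R\modl\modl$, then take opposites) is exactly the paper's inverse $\ev{N^{\op}}^{\op}$, and your rigidity argument is precisely the paper's check ``on the full subcategories spanned by the respective forgetful functors, which, by the universal properties, is all we need to check.'' The only difference is one of exposition: you make explicit the self-duality step (that $((\modr R)\modl)^{\op}$ with the functor $M = \kappa^{\op}$ is itself a free abelian category on $R$, using that exactness and additivity are preserved under passing to opposite categories), which the paper leaves implicit.
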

\begin{proof}
Let $N$ denote the canonical functor of type $R \rightarrow R\modl\modl$.
Since $(\modr R)\modl \simeq R^{\op}\modl\modl$ is the free abelian category of $R^{\op}$, an inverse of $\ev{M}$ (up to equivalence) can be obtained by $\ev{N^{\op}}^{\op}: ((\modr R)\modl)^{\op} \rightarrow R\modl\modl$.
The fact that this indeed yields an inverse can be easily checked on the full subcategories spanned by the respective forgetful functors, which, by the universal properties, is all we need to check.
\end{proof}

\begin{remark}
In \cite{DR18} the authors call the duality of $R\modl\modl$ and $(\modr R)\modl$ the \textbf{Auslander-Gruson-Jensen duality} since it was discovered by Auslander \cite{AusDuality} and Gruson and Jensen \cite{GJ80}.
\end{remark}

\begin{para}\label{para:duality_hom_tensor}
The Auslander-Gruson-Jensen duality maps hom functors to tensor functors.
To see this, we first remark that by its definition, the duality maps the forgetful functor 
\[
\Hom(R,-): R\modl \rightarrow \Ab
\]
to the forgetful functor 
\[
(- \otimes R): \modr R \rightarrow \Ab.
\]
Next, let $M \in R\modl$ be presented by the matrix $A \in R^{m \times n}$:
\begin{center}
   \begin{tikzpicture}[label/.style={postaction={
    decorate,
    decoration={markings, mark=at position .5 with \node #1;}},
    mylabel/.style={thick, draw=none, align=center, minimum width=0.5cm, minimum height=0.5cm,fill=white}}]
    \coordinate (r) at (3,0);
    \node (A) {$R^{1 \times m}$};
    \node (B) at ($(A)+(r)$) {$R^{1 \times n}$};
    \node (C) at ($(B) + (r)$) {$M$};
    \node (D) at ($(C) + (r)$) {$0$.};
    \draw[->,thick] (A) to node[above]{$A$} (B);
    \draw[->,thick] (B) to (C);
    \draw[->,thick] (C) to (D);
    \end{tikzpicture}
\end{center}
Now, the Auslander-Gruson-Jensen duality maps the kernel of
\[
\Hom(R^{n \times 1},-) \xrightarrow{\Hom(A,-)} \Hom(R^{m \times 1},-),
\]
which is $\Hom(M,-)$, to the cokernel of
\[
(- \otimes R^{1 \times m}) \xrightarrow{(- \otimes A)} (- \otimes R^{1 \times n})
\]
which is isomorphic to $(- \otimes M)$ since tensor functors are right exact.
\end{para}

\begin{corollary}\label{corollary:copresentations}
For every $\mathcal{G} \in R\modl\modl$ there exists an exact sequence in $R\modl\modl$ of the form
\begin{center}
   \begin{tikzpicture}[mylabel/.style={fill=white}]
    \coordinate (r) at (3,0);
    \node (A) {$0$};
    \node (B) at ($(A)+0.5*(r)$) {$\mathcal{G}$};
    \node (C) at ($(B) + 0.5*(r)$) {$(N \otimes -)$};
    \node (D) at ($(C) + (r)$) {$(N' \otimes -)$};
    \draw[->,thick] (A) to (B);
    \draw[->,thick] (B) to (C);
    \draw[->,thick] (C) to node[above]{$(\alpha \otimes -)$}(D);
    \end{tikzpicture}
\end{center}
for some morphism $\alpha: N \rightarrow N'$ in $\modr R$.
We refer to such an exact sequence as a \textbf{copresentation} of $\mathcal{G}$ given by the morphism $\alpha$.
\end{corollary}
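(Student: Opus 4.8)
The plan is to obtain the copresentation as the formal dual of an ordinary presentation, using the Auslander–Gruson–Jensen duality of Theorem~\ref{theorem:agj_duality}. Write $D$ for the contravariant equivalence $R\modl\modl \to (\modr R)\modl$ obtained by composing $\ev{M}$ with the canonical identification of $((\modr R)\modl)^{\op}$ with $(\modr R)\modl$; its quasi-inverse $D^{-1}\colon (\modr R)\modl \to R\modl\modl$ is again a contravariant equivalence. Being an equivalence of abelian categories, $D^{-1}$ is exact, and being contravariant it turns a right exact sequence into a left exact one.

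First I would dualize $\mathcal{G}$: set $\mathcal{H} := D(\mathcal{G}) \in (\modr R)\modl$. Since $\mathcal{H}$ is by definition a finitely presented functor on $\modr R$, Definition~\ref{definition:fp_functor} (applied with $\AC = \modr R$) furnishes objects $N, N' \in \modr R$, a morphism $\alpha\colon N \to N'$, and an exact presentation
\[
\Hom_{\modr R}( N', - ) \xrightarrow{\Hom(\alpha,-)} \Hom_{\modr R}( N, - ) \to \mathcal{H} \to 0.
\]
Applying the contravariant exact functor $D^{-1}$ reverses this sequence and recovers $\mathcal{G} \simeq D^{-1}(\mathcal{H})$, yielding an exact sequence
\[
0 \to \mathcal{G} \to D^{-1}( \Hom_{\modr R}( N, - ) ) \to D^{-1}( \Hom_{\modr R}( N', - ) )
\]
in $R\modl\modl$.

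It remains to identify the two right-hand terms, and this is the one genuine point to verify. Here I would invoke the symmetric version of the computation in \ref{para:duality_hom_tensor}: the duality interchanges hom functors and tensor functors, so that $D^{-1}$ sends the representable functor $\Hom_{\modr R}(N, -)$ to the tensor functor $(N \otimes -)$ and the induced map $\Hom(\alpha,-)$ to $(\alpha \otimes -)$. This is precisely the statement of \ref{para:duality_hom_tensor} read with $R^{\op}$ in place of $R$, which is legitimate because the duality of Theorem~\ref{theorem:agj_duality} is built symmetrically from the canonical functors $R \to R\modl\modl$ and $R^{\op} \to (\modr R)\modl$ and is checked on the forgetful functors alone. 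With this identification the sequence above becomes
\[
0 \to \mathcal{G} \to (N \otimes -) \xrightarrow{(\alpha \otimes -)} (N' \otimes -),
\]
which is the desired copresentation. Everything except the identification of $D^{-1}$ on representables is formal from the contravariant exactness of an equivalence, so the symmetric reading of \ref{para:duality_hom_tensor} is the only step requiring care.
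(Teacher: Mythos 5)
Your proposal is correct and is essentially the paper's own argument: the paper proves the corollary in one line by observing that, via \ref{para:duality_hom_tensor}, the Auslander--Gruson--Jensen duality sends presentations to copresentations, which is exactly what you carry out in detail. Your careful identification of $D^{-1}$ on representables via the symmetric ($R^{\op}$) reading of \ref{para:duality_hom_tensor} is precisely the content the paper leaves implicit.
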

\begin{proof}
By \ref{para:duality_hom_tensor} the Auslander-Gruson-Jensen duality sends presentations to copresentations.
\end{proof}

\begin{ex}[Copresentation of a pp formula]\label{example:copresentation_ppformula}
Let ${_B}\phi_{B'}$ be the pp formula given by $B \in R^{m \times k}$, $B' \in R^{m \times (n-k)}$.
We want to compute a copresentation of $\mathcal{B}( {_B}\phi_{B'}, - )$.
For this we let $\alpha$ be the unique morphism in $\modr R$ that makes the following diagram with exact rows commutative:
\begin{center}
    \begin{tikzpicture}[mylabel/.style={fill=white}]
      \coordinate (r) at (3,0);
      \coordinate (d) at (0,-2);
      
      \node (11) {$0$};
      \node (12) at ($(11)+(r)$) {$R^{k \times 1}$};
      \node (13) at ($(12) + (r)$) {$R^{k \times 1}$};
      \node (14) at ($(13) + (r)$) {$0$};
    
      \node (21) at ($(11) + (d)$){$R^{(n-k) \times 1}$};
      \node (22) at ($(21)+(r)$) {$R^{m \times 1}$};
      \node (23) at ($(22) + (r)$) {$\cokernel( B' )$};
      \node (24) at ($(23) + (r)$) {$0$};
      
      \draw[->,thick] (11) to (12);
      \draw[->,thick] (12) to (13);
      \draw[->,thick] (13) to (14);
    
      \draw[->,thick] (21) to node[above]{$B'$} (22);
      \draw[->,thick] (22) to (23);
      \draw[->,thick] (23) to (24);
    
      \draw[->,thick] (11) to (21);
      \draw[->,thick] (12) to node[left]{$B$} (22);
      \draw[->,thick] (13) to node[left]{$\alpha$} (23);
    \end{tikzpicture}
\end{center}
If we use the canonical identification $\ff^{\oplus k} \simeq (R^{k \times 1} \otimes -)$ in order to view $\mathcal{B}( {_B}\phi_{B'}, - )$ as a subobject of $(R^{k \times 1} \otimes -)$, then it is easy to compute that we have an exact sequence
\begin{center}
  \begin{tikzpicture}[mylabel/.style={fill=white}]
      \coordinate (r) at (3,0);
      
      \node (A) {$0$};
      \node (B) at ($(A)+(r)$) {$\mathcal{B}( {_B}\phi_{B'}, - )$};
      \node (C) at ($(B) + (r)$) {$(R^{k \times 1} \otimes -)$};
      \node (D) at ($(C) + (r)$) {$(\cokernel(B') \otimes -)$};
      
      \draw[->,thick] (A) to (B);
      \draw[->,thick] (B) to (C);
      \draw[->,thick] (C) to node[above]{$\alpha$} (D);
  \end{tikzpicture}
\end{center}
and hence our desired copresentation.
\end{ex}

\begin{corollary}\label{corollary:subobjects_are_beh_in_rmodmod}
Let $k \in \Nzero$. Any subobject $\mathcal{U} \hookrightarrow \ff^{\oplus k}$ in $R\modl\modl$ is equal to $\mathcal{B}(\phi,-) \hookrightarrow \ff^{\oplus k}$ as subobjects for some pp formula $\phi$ with $k$ free variables.
\end{corollary}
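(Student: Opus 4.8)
The plan is to realize $\mathcal{U}$ as the kernel of a morphism of tensor functors and then appeal to the copresentation of a pp formula computed in Example \ref{example:copresentation_ppformula}. Since $R\modl\modl$ is abelian, I would first form the cokernel $\mathcal{Q} := \cokernel(\mathcal{U} \hookrightarrow \ff^{\oplus k})$, obtaining a short exact sequence $0 \to \mathcal{U} \to \ff^{\oplus k} \xrightarrow{\pi} \mathcal{Q} \to 0$ in $R\modl\modl$, with $\mathcal{Q}$ automatically finitely presented. By Corollary \ref{corollary:copresentations} the functor $\mathcal{Q}$ admits a copresentation; in particular its first map is a monomorphism $\iota: \mathcal{Q} \hookrightarrow (N \otimes -)$ for some finitely presented right module $N \in \modr R$. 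Choosing a finite presentation $R^{(n-k) \times 1} \xrightarrow{B'} R^{m \times 1} \to N \to 0$ exhibits $N \cong \cokernel(B')$ for a matrix $B' \in R^{m \times (n-k)}$. The composite $\gamma := \iota \circ \pi: \ff^{\oplus k} \to (\cokernel(B') \otimes -)$ then satisfies $\kernel(\gamma) = \kernel(\pi) = \mathcal{U}$, since $\pi$ is epi with kernel $\mathcal{U}$ and $\iota$ is mono.

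Next I would put $\gamma$ into standard form. Under the canonical identification $\ff^{\oplus k} \simeq (R^{k \times 1} \otimes -)$ used in Example \ref{example:copresentation_ppformula}, the source of $\gamma$ is the tensor functor of the free right module $R^{k \times 1}$. By the Auslander-Gruson-Jensen duality (Theorem \ref{theorem:agj_duality}), which sends hom functors to tensor functors (see \ref{para:duality_hom_tensor}), the assignment $N \mapsto (N \otimes -)$ is a full and faithful functor $\modr R \to R\modl\modl$: composing it with the duality recovers the contravariant Yoneda embedding $N \mapsto \Hom_{\modr R}(N,-)$, which is full and faithful. Hence $\gamma = (g \otimes -)$ for a unique right-module homomorphism $g: R^{k \times 1} \to \cokernel(B')$. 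Because $R^{k \times 1}$ is free, hence projective, $g$ lifts along the projection $R^{m \times 1} \twoheadrightarrow \cokernel(B')$ to a right-module map $B: R^{k \times 1} \to R^{m \times 1}$, that is, to a matrix $B \in R^{m \times k}$ with $g$ equal to the composite of $B$ with the projection. This is precisely the commutative square that defines the morphism $\alpha = g$ in Example \ref{example:copresentation_ppformula}.

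Finally I would conclude by comparing subobjects. Example \ref{example:copresentation_ppformula} shows that, under the same identification $\ff^{\oplus k} \simeq (R^{k \times 1} \otimes -)$, the kernel of $(g \otimes -)$ is exactly the behavior subfunctor $\mathcal{B}({_B}\phi_{B'}, -) \hookrightarrow \ff^{\oplus k}$, where $\phi := {_B}\phi_{B'}$ has $k$ free variables and $n-k$ bound variables. Since we also computed $\kernel(\gamma) = \mathcal{U}$, and $\gamma = (g \otimes -)$, the two subobjects of $\ff^{\oplus k}$ coincide, giving $\mathcal{U} = \mathcal{B}({_B}\phi_{B'}, -)$ as required. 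The main obstacle is the middle step: one must guarantee that the abstractly produced transformation $\gamma$ is genuinely induced by a matrix, which is where the full faithfulness of the Auslander-Gruson-Jensen duality (equivalently, Yoneda's lemma applied to the representable functor $\ff^{\oplus k}$) and the projectivity of $R^{k \times 1}$ are both needed; once $\gamma$ is written as $(g \otimes -)$ with $g$ lifted to $B$, the identification of kernels is exactly the content of Example \ref{example:copresentation_ppformula}.
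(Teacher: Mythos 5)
Your proposal is correct and follows essentially the same route as the paper's proof: embed the quotient $\ff^{\oplus k}/\mathcal{U}$ into a tensor functor via Corollary \ref{corollary:copresentations}, observe that the composite $\ff^{\oplus k} \rightarrow (N \otimes -)$ has kernel $\mathcal{U}$, and identify that kernel with a behavior functor via Example \ref{example:copresentation_ppformula}. The only difference is that you spell out the step the paper leaves implicit, namely that the composite is of the form $(g \otimes -)$ for an actual module map $g$ (by full faithfulness of the tensor embedding) which lifts to a matrix $B$ by projectivity of $R^{k \times 1}$; this is a worthwhile clarification, not a deviation.
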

\begin{proof}
By \ref{corollary:copresentations} we can embed the quotient object $\ff^{\oplus k}/\mathcal{U}$ into some tensor functor $(N \otimes -)$ for some $N \in \modr R$. This gives us an exact sequence in $R\modl\modl$ of the form
\begin{center}
   \begin{tikzpicture}[mylabel/.style={fill=white}]
    \coordinate (r) at (3,0);
    \node (A) {$0$};
    \node (B) at ($(A)+(r)$) {$\mathcal{U}$};
    \node (C) at ($(B) + (r)$) {$\ff^{\oplus k}$};
    \node (D) at ($(C) + (r)$) {$(N \otimes -)$};
    \draw[->,thick] (A) to (B);
    \draw[->,thick] (B) to (C);
    \draw[->,thick] (C) to node[above]{$\alpha$}(D);
    \end{tikzpicture}
\end{center}
where $\alpha$ is the composition of the canonical projection $\ff^{\oplus k} \twoheadrightarrow \ff^{\oplus k}/\mathcal{U}$ with the chosen embedding $\ff^{\oplus k}/\mathcal{U} \hookrightarrow (N \otimes -)$.
Now, from \ref{example:copresentation_ppformula} we can see that this exact sequence has to be a copresentation of $\mathcal{B}( \phi, - )$ for some pp formula $\phi$ with $k$ free variables.
\end{proof}

\begin{ex}[Covariant defect]\label{example:covariant_defect}
There is a covariant version of the defect\footnote{Called the \textbf{covariant defect} by Alex Sorokin.}. 
It can be obtained by substituting $R^{\op}$ for $R$ in \ref{example:defect} and using the Auslander-Gruson-Jensen duality described in \ref{theorem:agj_duality}.
Explicitly:
We have a functor
\[
M: R \rightarrow \Modr R
\]
that maps the unique object $\bullet$ of $R$ (regarded as a category) to the free right $R$-module $R^{1 \times 1}$ of rank $1$, and $r \in R$ to the morphism of right modules induced by left multiplication with $r$.
Let $\mathcal{G} \in R\modl\modl$ have a copresentation given by $\alpha: N \rightarrow N' \in \modr R$ (see \ref{corollary:copresentations}).
Then, we can compute the covariant defect as follows:
\[
\codefect(\mathcal{G}) := \ev{M}(\mathcal{G}) \cong \kernel( \alpha ).
\]
An alternative way to describe $\codefect(\mathcal{G})$ is given by evaluation at ${_R}R$: we have a functor
\begin{align*}
    R\modl\modl &\rightarrow \Modr R \\
    \mathcal{G} &\mapsto \mathcal{G}({_R}R)
\end{align*}
where we turn $\mathcal{G}({_R}R)$ into a right $R$-module by letting $r \in R$ act via the abelian group homomorphism $\mathcal{G}( {_R}R \xrightarrow{r} {_R}R )$.
Since it is an evaluation functor, it is exact. It follows from the universal property of $R\modl\modl$ that it has to be naturally isomorphic to the covariant defect.
\end{ex}

\begin{ex}[Covariant defect of a pp formula]
Let $\phi $ be a pp formula. We want to compute
\[
\codefect( \mathcal{B}( \phi, - ) ).
\]
By the alternative description given in \ref{example:covariant_defect}, we simply have
\[
\codefect( \mathcal{B}( \phi, - ) ) \cong \{ x \in R^{k \times 1} \mid \phi( x ) \} \in \Modr R.
\]
\end{ex}

\begin{para}
There are many more interesting homological properties of $R\modl\modl$. See \cite{FuHerzog20} for a detailed explanation of the relationship between exactness conditions on functors and their projective and injective dimensions.
\end{para}

\begin{para}
Since the possibility for an algorithmic treatment is vital for algebraic systems theory, we remark that $R\modl\modl$ can be treated constructively whenever the additive closure of $R$ has decidable homotopy equations, see \cite{PosFree} for details. The underlying data structure of an object in $R\modl\modl$ is due to an explicit description of free abelian categories by Adelman in \cite{Adelman}.
\end{para}

\section{An abelian ambient category for behaviors}

In this section we construct an abelian ambient category for behaviors as a Serre quotient of $R\modl\modl$. Moreover, we prove that this construction satisfies the universal property that we sketched in our introduction.

\subsection{Preliminaries on Serre quotients}

This subsection gives a short introduction to the well-known theory of Serre quotients.
Throughout this subsection, $\AC$ denotes an abelian category.

Serre quotients are the universal solution to the problem of formally setting a subclass $\CC \subseteq \AC$ to zero in such a way that the resulting quotient category is again an abelian category.
Serre quotients in the context of abelian groups were introduced by Serre in \cite{Serre53}.
When Grothendieck introduced abelian categories in his famous Tôhoku paper \cite{GroTohoku}, he also described the process of taking quotients of abelian categories.
A first thorough treatment of Serre quotients can be found in Gabriel's thesis \cite{Gab_thesis}.

\begin{para}
An additive full subcategory $\CC \subseteq \AC$ is called a \textbf{Serre subcategory} if it is closed under passing to subobjects, quotient objects, and extensions.
\end{para}

\begin{para}
The \textbf{kernel} $\kernel(F)$ of an exact functor $F: \AC \rightarrow \BC$ between abelian categories $\AC$ and $\BC$ is defined as the full subcategory of $\AC$ spanned by the objects $\{ A \in \AC \mid F(A) \cong 0 \}$.
The kernel of an exact functor is always a Serre subcategory.
\end{para}

\begin{para}[Implicit description of Serre quotients by their universal property]\label{para:up_of_serre_quotients}
Let $\CC \subseteq \AC$ be a Serre subcategory. 
The \textbf{Serre quotient} of $\AC$ by $\CC$ consists of
\begin{enumerate}
\item an abelian category $\frac{\AC}{\CC}$,
\item an exact functor $\AC \rightarrow \frac{\AC}{\CC}$ (called the \textbf{canonical quotient functor})
\end{enumerate}
which satisfy the following universal property: for every abelian category $\BC$ and every exact functor $F: \AC \rightarrow \BC$ such that $\CC \subseteq \kernel( F )$, there exists an exact functor $\frac{\AC}{\CC} \rightarrow \BC$ (uniquely determined up to natural isomorphism) such that the following diagram commutes (up to natural isomorphism):
\begin{center}
       \begin{tikzpicture}[label/.style={postaction={
        decorate,
        decoration={markings, mark=at position .5 with \node #1;}},
        mylabel/.style={thick, draw=none, align=center, minimum width=0.5cm, minimum height=0.5cm,fill=white}}]
        \coordinate (r) at (4,0);
        \coordinate (d) at (0,-2);
        \node (A) {$\AC$};
        \node (B) at ($(A)+(r)$) {$\frac{\AC}{\CC}$};
        \node (C) at ($(B) + (d)$) {$\BC$};
        \draw[->,thick] (A) to (B);
        \draw[->,thick] (A) to node[below]{$F$} (C);
        \draw[->,thick,dashed] (B) to node[right]{$\exists!$} (C);
        \end{tikzpicture}
\end{center}
\end{para}

\begin{para}
The question whether Serre quotients exist leads to set-theoretic considerations (see, e.g., \cite[Chapter 10.3]{Weibel94}).
If $\AC$ is a small abelian category then the Serre quotient exists for all Serre subcategories.
We give two more scenarios in which Serre quotients do always exist in \ref{para:explicit_serre_quotients} and \ref{para:serre_quots_via_adjunctions}.
\end{para}

\begin{para}[Explicit description of Serre quotients]\label{para:explicit_serre_quotients}
Let $F: \AC \rightarrow \BC$ be an exact functor between abelian categories $\AC$ and $\BC$, and set $\CC := \kernel( F )$.
In this case, it is not hard to see that we can describe $\frac{\AC}{\CC}$ by means of $F$ and $\BC$:
\begin{enumerate}
\item The objects in $\frac{\AC}{\CC}$ are the same objects as in $\AC$.
\item The homomorphism sets $\Hom_{\frac{\AC}{\CC}}(A,A')$ for $A, A' \in \frac{\AC}{\CC}$ are given by a subset of $\Hom_{\BC}( FA, FA' )$, namely by all $f \in \Hom_{\BC}( FA, FA' )$ such that there exists an object $A'' \in \AC$ and two morphisms (in $\AC$)
\[
A \xleftarrow{g} A'' \xrightarrow{h} A'
\] 
such that $F(g)$ is an iso (in $\BC$) and 
\[
(FA \xrightarrow{f} FA') = (FA \xrightarrow{F(g)^{-1}} FA'' \xrightarrow{F(h)} FA').
\]
\item Composition in $\frac{\AC}{\CC}$ is given by composition in $\BC$, the identity of $A \in \frac{\AC}{\CC}$ is given by $\id_{FA}$.
\end{enumerate}
Note that we get a canonical factorization of $F$ into a composition of the canonical quotient functor followed by a canonical faithful exact functor:
\begin{center}
       \begin{tikzpicture}[label/.style={postaction={
        decorate,
        decoration={markings, mark=at position .5 with \node #1;}},
        mylabel/.style={thick, draw=none, align=center, minimum width=0.5cm, minimum height=0.5cm,fill=white}}]
        \coordinate (r) at (2,0);
        \coordinate (d) at (0,-1);
        \node (A) {$\AC$};
        \node (B) at ($(A)+(r)+(d)$) {$\frac{\AC}{\CC}$};
        \node (C) at ($(A) + 2*(r)$) {$\BC$};
        \draw[->,thick] (A) to (B);
        \draw[->,thick] (A) to node[above]{$F$} (C);
        \draw[->,thick] (B) to (C);
        \end{tikzpicture}
\end{center}
We will make use of this factorization within the next subsections.
\end{para}

\begin{para}[Identification of Serre quotients via adjunctions]\label{para:serre_quots_via_adjunctions}
Let $F: \AC \rightarrow \BC$ be an exact functor between abelian categories. Suppose that we have a right adjoint $R: \BC \rightarrow \AC$ to $F$ such that the counit of the adjunction $FR \rightarrow \id_{\BC}$ is a natural isomorphism.
Then the unique functor induced by the universal property of Serre quotients
\begin{center}
\begin{equation}\label{equation:serre_via_adjunction}
       \begin{tikzpicture}[label/.style={postaction={
        decorate,
        decoration={markings, mark=at position .5 with \node #1;}},
        mylabel/.style={thick, draw=none, align=center, minimum width=0.5cm, minimum height=0.5cm,fill=white}},baseline = ($(A) + 0.5*(d)$)]
        \coordinate (r) at (4,0);
        \coordinate (d) at (0,-2);
        \node (A) {$\AC$};
        \node (B) at ($(A)+(r)$) {$\frac{\AC}{\kernel(F)}$};
        \node (C) at ($(B) + (d)$) {$\BC$};
        \draw[->,thick] (A) to (B);
        \draw[->,thick] (A) to node[below]{$F$} (C);
        \draw[->,thick,dashed] (B) to node[right]{$\exists!$} (C);
        \end{tikzpicture}
\end{equation}
\end{center}
is an equivalence of categories \cite[Chapter III, Proposition 5]{Gab_thesis}. 
In this case, $\kernel(F)$ is called a \textbf{localizing Serre subcategory}.
Dually: suppose we have a left adjoint $L: \BC \rightarrow \AC$ to $F$ such that the unit of the adjunction $\id_{\BC} \rightarrow FL$ is a natural isomorphism.
Then again, the unique functor induced by the universal property of Serre quotients in \eqref{equation:serre_via_adjunction} is an equivalence.
\end{para}

We end our introduction to Serre quotients with two remarks that we will use within the next subsections.

\begin{remark}\label{remark:can_quotient_is_epi}
 If we are given two exact functors $F,G: \frac{\AC}{\CC} \rightarrow \BC$ into an abelian category $\BC$ such that
\[
(\AC \rightarrow \frac{\AC}{\CC} \xrightarrow{F} \BC) \simeq (\AC \rightarrow \frac{\AC}{\CC} \xrightarrow{G} \BC),
\]
then $F \simeq G$. Indeed, this follows directly from the fact that
\[
\CC \subseteq \kernel(\AC \rightarrow \frac{\AC}{\CC} \xrightarrow{F} \BC)
\]
and the universal property of Serre quotients.
If we ignore set theoretic issues, then this remark shows that the canonical quotient functor is an epimorphism in the category of abelian categories with exact functors as its morphisms considered up to natural isomorphism.
\end{remark}

\begin{remark}[Lifting subobjects]\label{remark:lifting_subobjects}
Let $A \hookrightarrow A'$ be a monomorphism in a Serre quotient $\frac{\AC}{\CC}$ for $\CC := \kernel(F)$ and $F: \AC \rightarrow \BC$ an exact functor between abelian categories.
Then by \ref{para:serre_quots_via_adjunctions} this monomorphism is of the form
\[
(FA \xrightarrow{F(g)^{-1}} FA'' \xrightarrow{F(h)} FA')
\]
for two morphisms in $\AC$
\[
A \xleftarrow{g} A'' \xrightarrow{h} A'
\]
such that $F(g)$ is an isomorphism in $\BC$.
In particular, it follows that the image embedding
\[
\image( h ) \hookrightarrow A'
\]
in $\AC$ yields a subobject in $\frac{\AC}{\CC}$ which is equal in $\frac{\AC}{\CC}$ to the subobject defined by $A \hookrightarrow A'$.
In other words, we can lift subobjects in $\frac{\AC}{\CC}$ of an object $A'$ to subobjects in $\AC$ of the same object $A'$.
\end{remark}

\subsection{Construction as a Serre quotient}\label{subsection:construction_of_behM}

In this subsection, we define $\Beh{R}{\M}$ as a Serre quotient of $R\modl\modl$, introduce our notation for its objects, and explain how we view its hom sets as subsets of abelian group homomorphisms.

\begin{definition}[Abelian ambient category for behaviors]
Let $\M \in R\Modl$ and let 
\begin{align*}
\ev{\M}: R\modl\modl &\rightarrow \Ab \\
\mathcal{G} &\mapsto \mathcal{G}(\M)
\end{align*}
be the evaluation functor at $\M$ (see \ref{para:modules_as_functors}).
The \textbf{abelian ambient category for behaviors over $\M$} is defined as the following Serre quotient:
\[
\Beh{R}{\M} := \frac{R\modl\modl}{\kernel(\ev{\M})}.
\]
\end{definition}

\begin{remark}\label{remark:abstract_names}
We introduce the following notation for the canonical quotient functor on objects:
\begin{align*}
R\modl\modl &\rightarrow \Beh{R}{\M} \\
\mathcal{G} &\mapsto  \asBeh{\mathcal{G}}{\M} 
\end{align*}
In other words, whenever we regard a functor $\mathcal{G} \in R\modl\modl$ as an object in $\Beh{R}{\M}$, we write $\asBeh{\mathcal{G}}{\M}$ for this object.
We call an object in $\Beh{R}{\M}$ an \textbf{abstract behavior}.
Recall that we denote by $\ff$ the forgetful functor in $R\modl\modl$.
The object $\asBeh{\ff}{M}$ plays a crucial role within the category $\Beh{R}{\M}$. We call it the \textbf{abstract signal space} (recall that we call the module $M$ the signal space, see \ref{def:behavior}).
\end{remark}

\begin{remark}\label{remark:morphisms_of_beh}
In this remark, we spell out the meaning of \ref{para:explicit_serre_quotients} in the context of $\Beh{R}{\M}$:
we have a commutative diagram of functors:
\begin{center}
       \begin{tikzpicture}[label/.style={postaction={
        decorate,
        decoration={markings, mark=at position .5 with \node #1;}},
        mylabel/.style={thick, draw=none, align=center, minimum width=0.5cm, minimum height=0.5cm,fill=white}}]
        \coordinate (r) at (5,0);
        \coordinate (d) at (0,-1);
        \node (A) {$R\modl\modl$};
        \node (B) at ($(A)+(r)+(d)$) {$\Beh{R}{\M}$};
        \node (C) at ($(A) + 2*(r)$) {$\Ab$};
        \draw[->,thick] (A) to node[below,xshift=-2em]{$\mathcal{G} \mapsto \asBeh{\mathcal{G}}{\M}$} (B);
        \draw[->,thick] (A) to node[above]{$\mathcal{G} \mapsto \mathcal{G}(\M)$} (C);
        \draw[->,thick] (B) to node[below,xshift=2em]{$\asBeh{\mathcal{G}}{\M} \mapsto \mathcal{G}(\M)$}(C);
        \end{tikzpicture}
\end{center}
Within this diagram, the \emph{canoncical} functor 
\begin{align*}
\Beh{R}{\M} &\rightarrow \Ab \\
\asBeh{\mathcal{G}}{\M} &\mapsto \mathcal{G}(\M)
\end{align*}
is faithful and exact, and thus can be regarded as a functor that forgets extra structure\footnote{In the sense of nlab \cite{nlab}.}:
This means that we can think about $\asBeh{\mathcal{G}}{\M}$ as an object with an underlying abelian group $\mathcal{G}(\M)$ that carries some extra structure which turns it into an abstract behavior. And only those group homomorphisms that respect this extra structure are morphisms of abstract behaviors.

The homomorphism sets $\Hom_{\Beh{R}{\M}}(\asBeh{\mathcal{G}}{\M}, \asBeh{\mathcal{H}}{\M})$
are given by all abelian group homomorphisms $\gamma: \mathcal{G}(\M) \rightarrow \mathcal{H}(\M)$ which are of the form
\[
\gamma = \alpha_M \circ \beta_M^{-1},
\]
where $\beta, \alpha \in R\modl\modl$ are natural transformations of types
\begin{center}
\begin{tikzpicture}[baseline=(base)]
        \coordinate (r) at (1.5,0);
        \coordinate (d) at (0,-1);
        \node (A) {$\mathcal{G}$};
        \node (B) at ($(A)+(r)+(d)$) {$\mathcal{E}$};
        \node (C) at ($(A) + 2*(r)$) {$\mathcal{H}$};
        \node (base) at ($(A) + 0.5*(d)$) {};
        \draw[->,thick] (B) to node[below]{$\beta$} (A);
        \draw[->,thick] (B) to node[below]{$\alpha$} (C);
\end{tikzpicture}
\end{center}
for some $\mathcal{E} \in R\modl\modl$ and where $\beta_M \in \Ab$ is an isomorphism of abelian groups.
Composition and identites in $\Beh{R}{\M}$ are given by composition and identities in $\Ab$.
In particular, we can now write down  the canonical quotient functor on morphisms:
\begin{align*}
R\modl\modl &\rightarrow \Beh{R}{\M} \\
\big(\mathcal{G} \xrightarrow{\alpha} \mathcal{H} \big) &\mapsto \big( \asBeh{\mathcal{G}}{\M} \xrightarrow{\asBehmor{\alpha}{\M}} \asBeh{\mathcal{H}}{\M} \big).
\end{align*}
\end{remark}

\begin{remark}[All Serre quotients arise as an abelian ambient category for behaviors]\label{remark:all_Serre_quotients_arise_as_beh}
Let
\[
\CC \subseteq R\modl\modl
\]
be a Serre subcategory. Then there exists a module $M \in R\Modl$ such that $\CC = \kernel( \ev{M} )$, and
\[
\frac{R\modl\modl}{\CC} \simeq \Beh{R}{M}.
\]
Indeed, $R\modl\modl$ is a small abelian category, i.e., its isomorphism classes form a set. It follows that the Serre quotient $\frac{R\modl\modl}{\CC}$ is also a small abelian category.
In particular, we may apply the Freyd-Mitchell embedding theorem \cite{FreydBook} and obtain a faithful exact functor
\[
\frac{R\modl\modl}{\CC} \rightarrow \Ab.
\]
Composition with the canonical quotient functor yields an exact functor
\[
R\modl\modl \rightarrow \frac{R\modl\modl}{\CC} \rightarrow \Ab.
\]
whose kernel is given by $\CC$ since the Freyd-Mitchell embedding is faithful.
The universal property $R\modl\modl$ (Theorem \ref{theorem:up_Rmodmod}) implies that this functor in turn is given by $\ev{M}$ for some uniquely determined (up to isomorphism) module
\[
M: R \rightarrow \Ab.
\]
We remark that a more systematic approach to obtain such a module $M$ for a given Serre subcategory $\CC$ is provided by the theory of pure injective modules \cite{PrestPSL}.
\end{remark}

\subsection{Elementary features}

We fix a ring $R$ and a module $M \in R\Modl$ in this subsection.
We enlist several features of $\Beh{R}{M}$ that are desirable from the point of view of algebraic systems theory. Many claims in this subsection simply follow from general properties of faithful and exact functors and we collected their proofs in greater generality in the Appendix.

We begin with two lemmata that show that elementary properties of objects and morphisms in $\Beh{R}{M}$ can be tested within $\Ab$. 

\begin{lemma}\label{lemma:desirable_features_Beh_zero}
Let $\mathcal{G} \in R\modl\modl$.
We have
    \[
    \asBeh{\mathcal{G}}{\M} \cong 0 \text{\hspace{2em}$\Longleftrightarrow$ \hspace{2em}} \mathcal{G}(\M) \cong 0.
    \]
\end{lemma}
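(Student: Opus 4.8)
The plan is to reduce the statement to the \emph{faithful} exact functor
\[
\Phi \colon \Beh{R}{M} \longrightarrow \Ab, \qquad \asBeh{\mathcal{G}}{M} \mapsto \mathcal{G}(M),
\]
exhibited in \ref{remark:morphisms_of_beh}. Both implications are then instances of general facts about additive functors, which is why they fit naturally into the uniform Appendix treatment: an additive functor always \emph{preserves} zero objects, and a faithful additive functor moreover \emph{reflects} them.

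For the implication $\asBeh{\mathcal{G}}{M} \cong 0 \Rightarrow \mathcal{G}(M) \cong 0$, I would simply apply $\Phi$: since additive functors send zero objects to zero objects, $\mathcal{G}(M) = \Phi(\asBeh{\mathcal{G}}{M}) \cong \Phi(0) \cong 0$. This direction uses only additivity of $\Phi$, not faithfulness.

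For the converse $\mathcal{G}(M) \cong 0 \Rightarrow \asBeh{\mathcal{G}}{M} \cong 0$, I would invoke faithfulness of $\Phi$ to reflect the zero object. The underlying general lemma is that a faithful additive functor reflects zero objects, which follows from the chain of equivalences: an object $X$ is a zero object iff $\id_X = 0_X$; by faithfulness this holds iff $\Phi(\id_X) = \Phi(0_X)$, i.e.\ iff $\id_{\Phi X} = 0_{\Phi X}$, which in turn is equivalent to $\Phi X \cong 0$. Applying this to $X = \asBeh{\mathcal{G}}{M}$ with $\Phi X = \mathcal{G}(M) \cong 0$ yields $\asBeh{\mathcal{G}}{M} \cong 0$.

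There is essentially no hard part here; the only thing requiring attention is keeping the two roles of $\Phi$ straight (preservation for one direction, reflection for the other) and confirming that \ref{remark:morphisms_of_beh} does supply both the additivity and the faithfulness of $\Phi$. I note in passing that the same conclusion can be reached more structurally by recalling that the kernel of the canonical quotient functor $R\modl\modl \to \Beh{R}{M}$ equals $\kernel(\ev{M})$, so that $\asBeh{\mathcal{G}}{M} \cong 0$ iff $\mathcal{G} \in \kernel(\ev{M})$ iff $\mathcal{G}(M) \cong 0$; the functor-theoretic argument above is, however, the one that aligns with the treatment via faithful exact functors used throughout this subsection.
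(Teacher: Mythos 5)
Your proposal is correct and matches the paper's proof: the paper also reduces the statement to the faithful exact functor $\Beh{R}{\M} \rightarrow \Ab$ of \ref{remark:morphisms_of_beh} and cites the general fact (Appendix \ref{lemma:faithful_implies_zero_test}) that a faithful additive functor reflects zero objects via the test $\id_A = 0$, which is precisely the argument you spell out. Your parenthetical alternative via $\kernel(\ev{\M})$ is also valid, but the functor-theoretic route is the one the paper takes.
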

\begin{proof}
Follows from Appendix \ref{lemma:faithful_implies_zero_test}.
\end{proof}

\begin{lemma}\label{lemma:desirable_features_Beh_monoepiiso}
Let $\mathcal{G},\mathcal{H} \in R\modl\modl$.
Suppose given a morphism 
\[
\asBeh{\mathcal{G}}{\M} \xrightarrow{\gamma}
\asBeh{\mathcal{H}}{\M}
\]
in $\Beh{R}{M}$. Then $\gamma$ is a mono/epi/iso in $\Beh{R}{\M}$ if and only if
\[
\mathcal{G}(\M) \xrightarrow{\gamma}
\mathcal{H}(\M)
\]
is a mono/epi/iso of abelian groups.
\end{lemma}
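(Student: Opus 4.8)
The plan is to reduce the entire statement to the single fact that the \emph{canonical} underlying-group functor
\[
U \colon \Beh{R}{M} \rightarrow \Ab, \qquad \asBeh{\mathcal{G}}{\M} \mapsto \mathcal{G}(\M),
\]
introduced in \ref{remark:morphisms_of_beh}, is faithful and exact, together with the general principle that such a functor between abelian categories both preserves and reflects monomorphisms, epimorphisms, and isomorphisms. The key observation is that $U$ acts as the identity on the underlying abelian group homomorphism, so that $U(\gamma)$ is literally the map $\mathcal{G}(\M) \xrightarrow{\gamma} \mathcal{H}(\M)$ appearing in the statement. Thus the lemma is exactly the assertion that $\gamma$ is mono/epi/iso in $\Beh{R}{M}$ if and only if $U(\gamma)$ is mono/epi/iso in $\Ab$.

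For the direction \emph{from $\Beh{R}{M}$ to $\Ab$} (preservation) I would use exactness of $U$: a morphism $\gamma$ is a monomorphism precisely when $0 \rightarrow \asBeh{\mathcal{G}}{\M} \xrightarrow{\gamma} \asBeh{\mathcal{H}}{\M}$ is exact, and applying the exact functor $U$ preserves this exactness, so $U(\gamma)$ is mono; dually, writing the epi condition as exactness of $\asBeh{\mathcal{G}}{\M} \xrightarrow{\gamma} \asBeh{\mathcal{H}}{\M} \rightarrow 0$ shows that $U(\gamma)$ is epi. Since every functor preserves isomorphisms, the iso case is immediate in this direction (or follows by combining the mono and epi cases).

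The substantial direction is \emph{reflection}, and here the crux is the reflection of zero objects, which is where faithfulness (and not merely exactness) enters; this is precisely the content of \ref{lemma:desirable_features_Beh_zero} (proved in Appendix \ref{lemma:faithful_implies_zero_test}). Concretely, if $U(\gamma)$ is a monomorphism, then $\kernel(U(\gamma)) \cong 0$ in $\Ab$; since $U$ is exact it commutes with kernels, so $U(\kernel(\gamma)) \cong \kernel(U(\gamma)) \cong 0$, and \ref{lemma:desirable_features_Beh_zero} forces $\kernel(\gamma) \cong 0$ in $\Beh{R}{M}$, whence $\gamma$ is mono. The epimorphism case is entirely dual, replacing $\kernel$ by $\cokernel$. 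Finally, if $U(\gamma)$ is an isomorphism then it is both mono and epi in $\Ab$, so by the two cases just treated $\gamma$ is both mono and epi in $\Beh{R}{M}$; as $\Beh{R}{M}$ is abelian, $\gamma$ is then an isomorphism. I expect no genuine obstacle beyond correctly invoking \ref{lemma:desirable_features_Beh_zero} in the reflection step; everything else is formal manipulation of exact sequences under the exact functor $U$.
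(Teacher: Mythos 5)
Your proof is correct and follows essentially the same route as the paper: the paper cites its Appendix result \ref{lemma:faithful_monoepiiso_test} (faithful exact functors between abelian categories preserve and reflect mono/epi/iso), whose proof is precisely your argument — test mono/epi via vanishing of kernel/cokernel, use exactness to commute $U$ with kernels and cokernels, invoke the zero-reflection lemma \ref{lemma:faithful_implies_zero_test} for faithfulness, and conclude the iso case from mono+epi in an abelian category. You have merely inlined the appendix lemma rather than citing it.
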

\begin{proof}
Follows from Appendix \ref{lemma:faithful_monoepiiso_test}.
\end{proof}

The following theorem shows that subobjects of $\asBeh{\ff}{M}^{\oplus k}$ correspond to those abelian subgroups of $M^{\oplus k}$ that are cut out by a pp formula in $k \in \Nzero$ free variables.

\begin{theorem}\label{theorem:subobjects_of_ff}
Let $k \in \Nzero$.
Every subobject of $\asBeh{\ff}{M}^{\oplus k}$, i.e., of the $k$-fold direct sum of the abstract signal space, is of the form
\[
\asBeh{\mathcal{B}( \phi, - )}{M} \xrightarrow{\iota_{M}} \asBeh{\ff}{M}^{\oplus k}
\]
for $\phi$ a pp formula in $k$ free variables and $\iota$ the canonical inclusion ${\mathcal{B}( \phi, - )} \hookrightarrow \ff^{\oplus k}$.
\end{theorem}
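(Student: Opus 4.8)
The plan is to deduce the statement from Corollary \ref{corollary:subobjects_are_beh_in_rmodmod}, which already classifies the subobjects of $\ff^{\oplus k}$ inside $R\modl\modl$, by transporting that classification across the canonical quotient functor $\pi \colon R\modl\modl \rightarrow \Beh{R}{M}$ using the subobject-lifting mechanism of Remark \ref{remark:lifting_subobjects}.

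First I would record that $\pi$ is exact, hence additive, so it preserves finite direct sums; consequently the $k$-fold power of the abstract signal space is the $\pi$-image of $\ff^{\oplus k}$, i.e.\ $\asBeh{\ff}{M}^{\oplus k} = \asBeh{\ff^{\oplus k}}{M}$. Now let $\mathcal{V} \hookrightarrow \asBeh{\ff}{M}^{\oplus k}$ be an arbitrary subobject in $\Beh{R}{M}$. Since $\Beh{R}{M} = \frac{R\modl\modl}{\kernel(\ev{M})}$ is realised through the exact functor $\ev{M}$, Remark \ref{remark:lifting_subobjects} applies and lifts $\mathcal{V}$ to a subobject $\mathcal{U} \hookrightarrow \ff^{\oplus k}$ in $R\modl\modl$ whose $\pi$-image equals $\mathcal{V}$ as a subobject of $\asBeh{\ff^{\oplus k}}{M}$.

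Finally I would invoke Corollary \ref{corollary:subobjects_are_beh_in_rmodmod} for the lifted subobject $\mathcal{U} \hookrightarrow \ff^{\oplus k}$: it produces a pp formula $\phi$ with $k$ free variables such that $\mathcal{U}$ coincides, as a subobject, with the canonical inclusion $\iota \colon \mathcal{B}(\phi,-) \hookrightarrow \ff^{\oplus k}$. Applying $\pi$ to this equality of subobjects and combining it with the identification of the previous step, I obtain that $\mathcal{V}$ equals $\asBeh{\mathcal{B}(\phi,-)}{M} \xrightarrow{\iota_M} \asBeh{\ff}{M}^{\oplus k}$ as subobjects of $\asBeh{\ff}{M}^{\oplus k}$, which is exactly the asserted form.

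The one point that deserves care—and the only plausible obstacle—is that the lifting of Remark \ref{remark:lifting_subobjects} must return $\mathcal{V}$ as literally the \emph{same} subobject (rather than merely an isomorphic domain equipped with some map), so that the equality supplied by Corollary \ref{corollary:subobjects_are_beh_in_rmodmod} survives intact after applying $\pi$; this is precisely the content guaranteed by the remark, so beyond citing it no further argument is required.
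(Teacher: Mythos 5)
Your proposal is correct and follows exactly the paper's own argument: lift the subobject along the canonical quotient functor via Remark \ref{remark:lifting_subobjects}, then apply Corollary \ref{corollary:subobjects_are_beh_in_rmodmod} to identify the lift with a canonical inclusion $\mathcal{B}(\phi,-)\hookrightarrow\ff^{\oplus k}$. The only difference is that you spell out the bookkeeping (exactness of the quotient functor preserving direct sums, and equality of subobjects surviving the descent) that the paper leaves implicit.
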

\begin{proof}
By \ref{remark:lifting_subobjects} we may lift subobjects of $\Beh{R}{M}$ to subobjects in $R\modl\modl$.
By \ref{corollary:subobjects_are_beh_in_rmodmod} every such subobject is given by a functor of trajectories.
\end{proof}

The next two lemmata describe a useful way to obtain morphisms in $\Beh{R}{M}$.

\begin{lemma}\label{lemma:desirable_features_create_mor_via_monos}
Suppose given the following diagram in $\Beh{R}{M}$:
    \begin{center}
\begin{tikzpicture}[baseline=(base)]
        \coordinate (r) at (1.5,0);
        \coordinate (d) at (0,1);
        \node (A) {$\asBeh{\mathcal{G}}{M}$};
        \node (B) at ($(A)+(r)+(d)$) {$\asBeh{\mathcal{E}}{M}$};
        \node (C) at ($(A) + 2*(r)$) {$\asBeh{\mathcal{H}}{M}$};
        \node (base) at ($(A) + 0.5*(d)$) {};
        \draw[<-right hook,thick] (B) to node[above]{$\alpha$} (A);
        \draw[<-,thick] (B) to node[above,xshift=0.5em]{$\beta$} (C);
\end{tikzpicture}
\end{center}
with $\alpha$ a mono.
If there exists a morphism $\gamma$ of abelian groups such that we have a commutative triangle
\begin{center}
\begin{tikzpicture}[baseline=(base)]
        \coordinate (r) at (1.5,0);
        \coordinate (d) at (0,1);
        \node (A) {$\mathcal{G}({M})$};
        \node (B) at ($(A)+(r)+(d)$) {$\mathcal{E}({M})$};
        \node (C) at ($(A) + 2*(r)$) {$\mathcal{H}({M})$};
        \node (base) at ($(A) + 0.5*(d)$) {};
        \draw[<-right hook,thick] (B) to node[above]{$\alpha$} (A);
        \draw[<-,thick] (B) to node[above,xshift=0.5em]{$\beta$} (C);
        \draw[<-, thick] (A) to node[above]{$\gamma$} (C);
\end{tikzpicture}
\end{center}
then $\gamma$ is already a morphism of abstract behaviors
\[
\gamma: \asBeh{\mathcal{H}}{M} \rightarrow \asBeh{\mathcal{G}}{M}.
\]
\end{lemma}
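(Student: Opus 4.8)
The plan is to exploit that the canonical functor $F \colon \Beh{R}{M} \to \Ab$, $\asBeh{\mathcal{G}}{M} \mapsto \mathcal{G}(M)$, is both faithful and exact, and to reduce the assertion to the standard fact that in an abelian category a morphism factors through a subobject precisely when its composite with the associated cokernel projection vanishes. Throughout I write $\alpha,\beta$ both for the morphisms in $\Beh{R}{M}$ and for their underlying group homomorphisms, exactly as in the statement.

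First I would form the cokernel projection $c \colon \asBeh{\mathcal{E}}{M} \to \cokernel(\alpha)$ of the mono $\alpha$ inside the abelian category $\Beh{R}{M}$. The key claim is that $c \circ \beta = 0$ in $\Beh{R}{M}$. Since $F$ is faithful, it reflects zero morphisms, so it suffices to verify $F(c \circ \beta) = 0$ in $\Ab$. Since $F$ is exact, it sends $c$ to the cokernel projection of the group monomorphism $\alpha$; in particular the underlying map of $c$ satisfies $c \circ \alpha = 0$ as group homomorphisms. Using the hypothesis $\beta = \alpha \circ \gamma$ on underlying groups, I obtain $c \circ \beta = c \circ \alpha \circ \gamma = 0$, which establishes the claim.

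Next, because $\alpha$ is a mono it is the kernel of its own cokernel, i.e.\ the subobject $\alpha \colon \asBeh{\mathcal{G}}{M} \hookrightarrow \asBeh{\mathcal{E}}{M}$ equals $\kernel(c)$. The vanishing $c \circ \beta = 0$ therefore yields a unique morphism $\delta \colon \asBeh{\mathcal{H}}{M} \to \asBeh{\mathcal{G}}{M}$ in $\Beh{R}{M}$ with $\alpha \circ \delta = \beta$. Applying $F$ and reading the resulting identity on underlying groups gives $\alpha \circ \delta_M = \beta = \alpha \circ \gamma$, where now $\alpha$ denotes the injective group homomorphism underlying the mono $\alpha$ (injectivity by Lemma \ref{lemma:desirable_features_Beh_monoepiiso}). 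Cancelling the monomorphism $\alpha$ forces $\delta_M = \gamma$, so $\gamma$ is precisely the underlying group homomorphism of $\delta$; this is the desired morphism of abstract behaviors $\gamma \colon \asBeh{\mathcal{H}}{M} \to \asBeh{\mathcal{G}}{M}$.

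I expect the only genuine subtlety to be the interplay of the two properties of $F$: faithfulness is what lets me detect the vanishing $c \circ \beta = 0$ after passing to $\Ab$, while exactness is what identifies $F(c)$ with the group-level cokernel of $\alpha$ and so legitimizes the computation $c \circ \alpha \circ \gamma = 0$. Once these are in place, the remainder is the routine ``factor through the kernel of the cokernel'' manipulation in $\Beh{R}{M}$ together with cancellation of the mono $\alpha$, and there is no real obstacle beyond keeping the directions of the three arrows straight.
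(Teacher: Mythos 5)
Your proof is correct and follows essentially the same route as the paper, which simply invokes Appendix \ref{lemma:left_exact_faithful_yields_triangles}: there, too, the mono is realized as the kernel of a morphism (your cokernel projection $c$), faithfulness of the functor to $\Ab$ reflects the vanishing $c \circ \beta = 0$, the universal property of kernels produces the lift, and cancelling the mono identifies its underlying map with $\gamma$. The only cosmetic difference is that you use full exactness where left exactness (preservation of kernels) suffices — in particular, $F(c)\circ F(\alpha)=0$ already follows from functoriality alone — but this does not affect correctness.
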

\begin{proof}
Follows from Appendix \ref{lemma:left_exact_faithful_yields_triangles}.
\end{proof}

\begin{lemma}\label{lemma:desirable_features_create_mor_via_epis}
Suppose given the following diagram in $\Beh{R}{M}$:
    \begin{center}
\begin{tikzpicture}[baseline=(base)]
        \coordinate (r) at (1.5,0);
        \coordinate (d) at (0,-1);
        \node (A) {$\asBeh{\mathcal{G}}{M}$};
        \node (B) at ($(A)+(r)+(d)$) {$\asBeh{\mathcal{E}}{M}$};
        \node (C) at ($(A) + 2*(r)$) {$\asBeh{\mathcal{H}}{M}$};
        \node (base) at ($(A) + 0.5*(d)$) {};
        \draw[->>,thick] (B) to node[below]{$\beta$} (A);
        \draw[->,thick] (B) to node[below,xshift=0.5em]{$\alpha$} (C);
\end{tikzpicture}
\end{center}
with $\beta$ an epi.
If there exists a morphism $\gamma$ of abelian groups such that we have a commutative triangle
\begin{center}
\begin{tikzpicture}[baseline=(base)]
        \coordinate (r) at (1.5,0);
        \coordinate (d) at (0,-1);
        \node (A) {$\mathcal{G}({M})$};
        \node (B) at ($(A)+(r)+(d)$) {$\mathcal{E}({M})$};
        \node (C) at ($(A) + 2*(r)$) {$\mathcal{H}({M})$};
        \node (base) at ($(A) + 0.5*(d)$) {};
        \draw[->>,thick] (B) to node[below]{$\beta$} (A);
        \draw[->,thick] (B) to node[below,xshift=0.5em]{$\alpha$} (C);
        \draw[->, thick] (A) to node[above]{$\gamma$} (C);
\end{tikzpicture}
\end{center}
then $\gamma$ is already a morphism of abstract behaviors
\[
\gamma: \asBeh{\mathcal{G}}{M} \rightarrow \asBeh{\mathcal{H}}{M}.
\]
\end{lemma}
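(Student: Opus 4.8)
Write $U\colon \Beh{R}{M}\to\Ab$ for the canonical faithful exact functor of \ref{remark:morphisms_of_beh}, so that $U(\asBeh{\mathcal{G}}{M}) = \mathcal{G}(M)$ and the commuting-triangle hypothesis reads $\gamma \circ U(\beta) = U(\alpha)$. By the identification of hom-sets in \ref{remark:morphisms_of_beh}, the assertion that $\gamma$ is a morphism of abstract behaviors means precisely that $\gamma = U(\tilde\gamma)$ for some morphism $\tilde\gamma\colon \asBeh{\mathcal{G}}{M}\to\asBeh{\mathcal{H}}{M}$ in $\Beh{R}{M}$. The plan is to produce such a $\tilde\gamma$ by a cokernel factorization. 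This lemma is the exact dual of \ref{lemma:desirable_features_create_mor_via_monos}, with epimorphisms and cokernels in place of monomorphisms and kernels, so one could alternatively cite the dual of the cited appendix result; I would instead spell out the short self-contained argument, which uses only that $U$ is faithful and exact.

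First I would set up the factorization. Since $\Beh{R}{M}$ is abelian, form the kernel $\kappa\colon \kernel(\beta)\hookrightarrow\asBeh{\mathcal{E}}{M}$ of the epimorphism $\beta$. As $\beta$ is epi, it is the cokernel of its own kernel $\kappa$. Hence, to obtain a $\tilde\gamma$ with $\tilde\gamma\circ\beta = \alpha$ it suffices, by the universal property of the cokernel, to verify the single vanishing $\alpha\circ\kappa = 0$ in $\Beh{R}{M}$.

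The crux is this vanishing, and faithfulness reduces it to a computation in $\Ab$: since a faithful additive functor reflects zero morphisms, it is enough to show $U(\alpha\circ\kappa) = 0$. Now $U(\alpha\circ\kappa) = U(\alpha)\circ U(\kappa) = \gamma\circ U(\beta)\circ U(\kappa) = \gamma\circ U(\beta\circ\kappa)$, where the middle equality is the hypothesis $U(\alpha) = \gamma\circ U(\beta)$. But $\beta\circ\kappa = 0$ already in $\Beh{R}{M}$, so $U(\beta\circ\kappa) = 0$ and therefore $U(\alpha\circ\kappa) = 0$; faithfulness then forces $\alpha\circ\kappa = 0$, and the universal property yields the desired $\tilde\gamma\colon \asBeh{\mathcal{G}}{M}\to\asBeh{\mathcal{H}}{M}$.

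It remains to identify the underlying map of $\tilde\gamma$ with $\gamma$. Applying $U$ to $\tilde\gamma\circ\beta = \alpha$ and using the hypothesis gives $U(\tilde\gamma)\circ U(\beta) = U(\alpha) = \gamma\circ U(\beta)$. Since $U$ is exact, it sends the epimorphism $\beta$ to a surjection $U(\beta)$, which may be cancelled on the right, yielding $U(\tilde\gamma) = \gamma$. Thus $\gamma$ is the morphism of abstract behaviors $\tilde\gamma$, as claimed. The only delicate point — the \emph{main obstacle}, though a mild one — is the passage from $U(\alpha\circ\kappa) = 0$ back to $\alpha\circ\kappa = 0$, which rests solely on the faithfulness of $U$; note that no injectivity, cogenerator, or largeness property of $M$ enters.
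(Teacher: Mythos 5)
Your proof is correct and is essentially the paper's own argument: the paper proves this lemma as the dual of \ref{lemma:desirable_features_create_mor_via_monos}, which in turn rests on Appendix \ref{lemma:left_exact_faithful_yields_triangles}, and your argument --- write the epi $\beta$ as the cokernel of its kernel $\kappa$, transfer the vanishing $\alpha\circ\kappa=0$ from $\Ab$ to $\Beh{R}{M}$ by faithfulness, then factor --- is precisely that dual argument spelled out self-contained. Your final identification $U(\tilde\gamma)=\gamma$ by right-cancelling the surjection $U(\beta)$ likewise mirrors the appendix proof's left-cancellation of the mono, so nothing is missing.
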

\begin{proof}
Dual to \ref{lemma:desirable_features_create_mor_via_monos}.
\end{proof}

We end this subsection with two corollaries that show that lifts of subgroups/quotient groups into the category $\Beh{R}{M}$ are necessarily unique, if they exist.

\begin{corollary}\label{corollary:desirable_features_at_most_one_lift_subobj}
Let $\asBeh{\mathcal{G}}{M} \in \Beh{R}{M}$. Suppose given an abelian subgroup
\[
U \hookrightarrow \mathcal{G}(M).
\]
Then there exists at most (up to equality of subojects) one subobject $\asBeh{\mathcal{U}}{M} \hookrightarrow\asBeh{\mathcal{G}}{M}$ that is mapped to $U \hookrightarrow \mathcal{G}(M)$ via the canonical functor from $\Beh{R}{M}$ to $\Ab$.
\end{corollary}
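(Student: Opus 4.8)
I want to show that a subobject $\asBeh{\mathcal{U}}{M} \hookrightarrow \asBeh{\mathcal{G}}{M}$ in $\Beh{R}{M}$ is uniquely determined (as a subobject) by the underlying subgroup $U \hookrightarrow \mathcal{G}(M)$ it produces under the canonical functor to $\Ab$. Since the canonical functor $\Beh{R}{M} \to \Ab$ is faithful and exact (see \ref{remark:morphisms_of_beh}), the natural strategy is to compare two candidate lifts and use faithfulness to force them to agree.

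\begin{proof}
Suppose $\asBeh{\mathcal{U}}{M} \xrightarrow{\iota} \asBeh{\mathcal{G}}{M}$ and $\asBeh{\mathcal{U}'}{M} \xrightarrow{\iota'} \asBeh{\mathcal{G}}{M}$ are two subobjects of $\asBeh{\mathcal{G}}{M}$ whose images under the canonical functor $\Beh{R}{M} \to \Ab$ both equal the given subgroup $U \hookrightarrow \mathcal{G}(M)$. By \ref{lemma:desirable_features_Beh_monoepiiso} the monos $\iota, \iota'$ map to monomorphisms of abelian groups, and by hypothesis both have the same image $U$ inside $\mathcal{G}(M)$. Hence on underlying abelian groups there is a unique isomorphism $\gamma: \mathcal{U}'(M) \to \mathcal{U}(M)$ with $\iota_M \circ \gamma = \iota'_M$, and likewise its inverse. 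The goal is to promote $\gamma$ to an isomorphism of abstract behaviors compatible with $\iota$ and $\iota'$, which is exactly what it means for the two subobjects to be equal.
\end{proof}

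**Key steps and the main obstacle.** First I would invoke \ref{lemma:desirable_features_create_mor_via_monos}: taking the mono there to be $\iota$ and the map $\beta$ to be $\iota'$, the commutative triangle $\iota_M \circ \gamma = \iota'_M$ in $\Ab$ lifts $\gamma$ to an honest morphism $\gamma : \asBeh{\mathcal{U}'}{M} \to \asBeh{\mathcal{U}}{M}$ in $\Beh{R}{M}$. Applying the same lemma with the roles of $\iota$ and $\iota'$ exchanged lifts the inverse group map to a morphism $\delta : \asBeh{\mathcal{U}}{M} \to \asBeh{\mathcal{U}'}{M}$. Then $\gamma$ and $\delta$ are mutually inverse as abstract behaviors: both composites map under the faithful exact functor to the identity of $U$, so by faithfulness $\gamma\circ\delta = \id$ and $\delta\circ\gamma = \id$ in $\Beh{R}{M}$. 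Finally, this isomorphism commutes with $\iota$ and $\iota'$ because the corresponding identity holds in $\Ab$ and the functor is faithful, so $\iota$ and $\iota'$ define the same subobject of $\asBeh{\mathcal{G}}{M}$. I expect the only delicate point to be the bookkeeping of which lemma (\ref{lemma:desirable_features_create_mor_via_monos} versus its dual) applies in which direction; the substance is carried entirely by the faithfulness of the canonical functor, which reduces every equation to be checked to the already-known corresponding equation among abelian groups.
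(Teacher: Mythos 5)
Your proof is correct and takes essentially the same route as the paper: the paper's one-line proof cites Appendix \ref{corollary:subobjects_fully_faithful} (full faithfulness of the induced functor $\Sub(\asBeh{\mathcal{G}}{M}) \rightarrow \Sub(\mathcal{G}(M))$), which rests on exactly the triangle-lifting lemma you invoke in its specialized form \ref{lemma:desirable_features_create_mor_via_monos}. You have merely unpacked the mutual-inclusion and faithfulness bookkeeping that the paper's citation leaves implicit.
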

\begin{proof}
Follows from Appendix \ref{corollary:subobjects_fully_faithful}.
\end{proof}

\begin{corollary}\label{corollary:desirable_features_at_most_one_lift_quoobj}
Let $\asBeh{\mathcal{G}}{M} \in \Beh{R}{M}$. Suppose given an abelian quotient group
\[
\mathcal{G}(M) \twoheadrightarrow Q
\]
Then there exists at most (up to equality of quotient objects) one quotient object $\asBeh{\mathcal{G}}{M} \twoheadrightarrow \asBeh{\mathcal{Q}}{M}$ that is mapped to $\mathcal{G}(M) \twoheadrightarrow Q$ via the canonical functor from $\Beh{R}{M}$ to $\Ab$.
\end{corollary}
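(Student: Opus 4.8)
The plan is to reduce the statement to its subobject counterpart \ref{corollary:desirable_features_at_most_one_lift_subobj} by exploiting the duality between quotient objects and subobjects in an abelian category. Concretely, in any abelian category a quotient object $\asBeh{\mathcal{G}}{M} \twoheadrightarrow \asBeh{\mathcal{Q}}{M}$ both determines and is determined by the subobject $\kernel(\asBeh{\mathcal{G}}{M} \twoheadrightarrow \asBeh{\mathcal{Q}}{M}) \hookrightarrow \asBeh{\mathcal{G}}{M}$, since forming kernels and forming cokernels of subobject embeddings are mutually inverse bijections between the quotient objects and the subobjects of a fixed object. The extra ingredient I would use is that the canonical functor $F \colon \Beh{R}{M} \rightarrow \Ab$ is faithful and \emph{exact} (see \ref{remark:morphisms_of_beh}), so that it preserves kernels; this lets me match kernels computed in $\Beh{R}{M}$ with the kernel of $\mathcal{G}(M) \twoheadrightarrow Q$ computed in $\Ab$.

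First I would take two putative lifts, i.e.\ two quotient objects $\asBeh{\mathcal{G}}{M} \twoheadrightarrow \asBeh{\mathcal{Q}_1}{M}$ and $\asBeh{\mathcal{G}}{M} \twoheadrightarrow \asBeh{\mathcal{Q}_2}{M}$, both of which $F$ sends to the prescribed quotient $\mathcal{G}(M) \twoheadrightarrow Q$. Next I would form their kernels in $\Beh{R}{M}$, obtaining subobjects $\asBeh{\mathcal{K}_i}{M} \hookrightarrow \asBeh{\mathcal{G}}{M}$ for $i = 1, 2$. Because $F$ is exact it preserves kernels, so it sends each $\asBeh{\mathcal{K}_i}{M} \hookrightarrow \asBeh{\mathcal{G}}{M}$ to the single subgroup $\kernel(\mathcal{G}(M) \twoheadrightarrow Q) \hookrightarrow \mathcal{G}(M)$. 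Thus $\asBeh{\mathcal{K}_1}{M}$ and $\asBeh{\mathcal{K}_2}{M}$ are subobjects of $\asBeh{\mathcal{G}}{M}$ lifting one and the same abelian subgroup, and \ref{corollary:desirable_features_at_most_one_lift_subobj} forces them to be equal as subobjects. Finally, running the kernel-cokernel bijection backwards inside $\Beh{R}{M}$ turns this equality of subobjects into the desired equality of quotient objects.

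The only point that requires genuine care is the last step, namely that equality of the two kernel subobjects really does yield equality of the two quotient objects. This is the statement that passing to the cokernel of a subobject embedding is well defined on equivalence classes of subobjects and inverts the kernel map -- a standard but load-bearing fact about abelian categories that I would make explicit rather than merely invoke. Everything else is a faithful transcription of the subobject case; indeed, since the hypotheses on $F$ are self-dual, the corollary can equally well be proved by dualizing \ref{corollary:desirable_features_at_most_one_lift_subobj} outright.
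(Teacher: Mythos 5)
Your proof is correct, but it takes a different route from the paper, whose entire proof is the single line ``Dual to \ref{corollary:desirable_features_at_most_one_lift_subobj}'' -- i.e., the paper silently invokes the dual versions of the appendix results (faithful functors respecting cokernels induce fully faithful functors on quotient objects), as licensed by the closing remark of the appendix that all statements there dualize. You instead reduce the quotient statement to the already-proven subobject statement \emph{inside} $\Beh{R}{M}$: pass to kernels, use exactness of the canonical functor to $\Ab$ to match them over the prescribed subgroup $\kernel(\mathcal{G}(M) \twoheadrightarrow Q)$, apply \ref{corollary:desirable_features_at_most_one_lift_subobj}, and then recover the quotient objects via the fact that in an abelian category every epimorphism is the cokernel of its kernel, so that the kernel map is a bijection from quotient objects to subobjects. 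What your reduction buys is that \ref{corollary:desirable_features_at_most_one_lift_subobj} can be used as a black box and no dual appendix lemmas need to be formulated; what it costs is a genuine reliance on the abelian structure of $\Beh{R}{M}$ (the epi-as-cokernel-of-its-kernel fact, which you rightly flag as the load-bearing step) and on kernel preservation by the functor, whereas the paper's dualization only needs the self-dual hypotheses already in place. You also correctly note at the end that outright dualization works, which is exactly the paper's argument, so the two approaches converge; yours is simply the more explicit of the two.
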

\begin{proof}
Dual to \ref{corollary:desirable_features_at_most_one_lift_subobj}.
\end{proof}

\subsection{The Serre quotient induced by the contravariant defect}\label{subsection:quo_ind_by_defect}

In this subsection, we show in \ref{corollary:beh_rmodop_equivalence} that for left coherent rings $R$ and fp-injective fp-cogenerators $M \in R\Modl$, we have an equivalence
\[
\Beh{R}{\M} \simeq (R\modl)^{\op}.
\]
This recovers the fundamental module-behaviour duality of algebraic systems theory.

\begin{para}\label{para:coherent_rings}
Recall that a ring $R$ is called \textbf{left coherent} if every finitely generated left submodule of ${_R}R$ is finitely presented.
In fact, this is equivalent to $R\modl$ being an abelian category.
In this case, the canonical embedding $R\modl \subseteq R\Modl$ is exact.
A constructive proof of these statements can for example be found in \cite[Theorem 4.6]{PosFreyd}.
Dually, $R$ is called \textbf{right coherent} if $R^{\op}$ is left coherent.
\end{para}

\begin{para}
Let $R$ be a left coherent ring.
Since $R\modl$ is abelian, the contravariant defect (see \ref{example:defect}) corestricts to a functor of type
\[
\defect: R\modl\modl \rightarrow (R\modl)^{\op}.
\]
\end{para}

\begin{theorem}\label{theorem:defect_adjunction}
Let $R$ be a left coherent ring.
Let
\begin{align*}
\yoneda: (R\modl)^{\op} &\rightarrow R\modl\modl \\
M &\mapsto \Hom(M,-)
\end{align*}
denote the contravariant Yoneda embedding. Then the contravariant defect is left adjoint to the contravariant Yoneda embedding:
\begin{equation}\label{eq:adj_setup}
\begin{tikzpicture}[mylabel/.style={fill=white}, baseline=(A)]
      \coordinate (r) at (6,0);
      \node (A) {$R\modl\modl$};
      \node (B) at ($(A) + (r)$) {$(R\modl)^{\op}$};
      \draw[->, out = 30, in = 180-30] (A) to node[mylabel]{$\defect$} (B);
      \draw[<-, out = -30, in = 180+30] (A) to node[mylabel]{$\yoneda$} (B);
      \node[rotate=90] (t) at ($(A) + 0.5*(r)$) {$\vdash$};
\end{tikzpicture}
\end{equation}
Moreover, the counit of this adjunction
\[
\defect(\yoneda(M)) \xrightarrow{\simeq} M
\]
is an isomorphism natural in $M \in R\modl$.
In particular, $\defect$ induces an equivalence of categories
\[
\frac{R\modl\modl}{\kernel( \defect )} \simeq (R\modl)^{\op}.
\]
\end{theorem}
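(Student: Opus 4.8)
The plan is to exhibit the stated adjunction $\defect \dashv \yoneda$ by a direct computation of the relevant hom-sets, to deduce from it that the counit is invertible, and then to read off the final equivalence as an instance of the identification of Serre quotients via adjunctions in \ref{para:serre_quots_via_adjunctions}. Before starting I would record the exactness and well-definedness that the hypotheses supply: since $R$ is left coherent, $R\modl$ is abelian and the inclusion $R\modl \hookrightarrow R\Modl$ is exact (see \ref{para:coherent_rings}), so $\kernel(\alpha)$ remains finitely presented for every morphism $\alpha$ between finitely presented modules, and $\defect$, being an evaluation functor (\ref{example:defect}, \ref{para:modules_as_functors}), is exact as a functor $R\modl\modl \rightarrow (R\modl)^{\op}$. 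These are exactly the ingredients required to apply \ref{para:serre_quots_via_adjunctions} at the end, with $F := \defect$ and right adjoint $\yoneda$.

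The heart of the argument is the construction of the adjunction isomorphism. Let $\mathcal{G} \in R\modl\modl$ have a presentation given by a morphism $\alpha: N \rightarrow N'$ in $R\modl$, so that $\defect(\mathcal{G}) \cong \kernel(\alpha)$ by \ref{example:defect}. I would apply the contravariant functor $\Hom_{R\modl\modl}(-, \Hom(M,-))$ to the presenting exact sequence
\[
\Hom(N',-) \xrightarrow{\Hom(\alpha,-)} \Hom(N,-) \to \mathcal{G} \to 0,
\]
obtaining the left-exact sequence
\[
0 \to \Hom_{R\modl\modl}(\mathcal{G}, \Hom(M,-)) \to \Hom_{R\modl\modl}(\Hom(N,-), \Hom(M,-)) \to \Hom_{R\modl\modl}(\Hom(N',-), \Hom(M,-)).
\]
By Yoneda's lemma the two right-hand terms are naturally $\Hom(M,N)$ and $\Hom(M,N')$, and the connecting map is post-composition $\Hom(M,\alpha)$. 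Hence $\Hom_{R\modl\modl}(\mathcal{G}, \Hom(M,-)) \cong \kernel(\Hom(M,\alpha))$, and by left-exactness of $\Hom(M,-)$ this kernel is $\Hom(M, \kernel(\alpha)) = \Hom_{R\modl}(M, \defect(\mathcal{G})) = \Hom_{(R\modl)^{\op}}(\defect(\mathcal{G}), M)$. Chasing these identifications shows the resulting bijection
\[
\Hom_{(R\modl)^{\op}}(\defect(\mathcal{G}), M) \cong \Hom_{R\modl\modl}(\mathcal{G}, \yoneda(M))
\]
is natural in both $\mathcal{G}$ and $M$, which is precisely the asserted adjunction.

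For the counit I would argue formally: the contravariant Yoneda embedding $\yoneda$ is fully faithful by Yoneda's lemma, and in any adjunction the right adjoint is fully faithful exactly when the counit is a natural isomorphism; this gives $\defect(\yoneda(M)) \xrightarrow{\simeq} M$. (Concretely, one may also present $\Hom(M,-)$ as the cokernel of $\Hom(M \to 0, -)$ and read off $\defect(\yoneda(M)) \cong \kernel(M \to 0) = M$ directly, checking that this matches the counit.) Having an exact functor $\defect$ together with a right adjoint $\yoneda$ whose counit is a natural isomorphism, \ref{para:serre_quots_via_adjunctions} immediately yields that the canonically induced functor $\frac{R\modl\modl}{\kernel(\defect)} \rightarrow (R\modl)^{\op}$ is an equivalence. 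The one genuinely laborious step is verifying naturality of the hom-set bijection in the second paragraph (so that it constitutes an adjunction rather than merely a pointwise isomorphism); everything else is either a Yoneda computation or a formal consequence of the Serre-quotient machinery already established.
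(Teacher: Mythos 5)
Your proposal is correct, and its overall skeleton --- establish the adjunction, deduce that the counit is invertible, then invoke \ref{para:serre_quots_via_adjunctions} with the exact functor $\defect$ --- is the same as the paper's. The genuine difference is the first step: the paper obtains the adjunction by citing \cite[Section 5, Lemma 8]{Russell16}, whereas you prove it directly by applying $\Hom_{R\modl\modl}(-,\yoneda(M))$ to a presentation $\Hom(N',-) \xrightarrow{\Hom(\alpha,-)} \Hom(N,-) \rightarrow \mathcal{G} \rightarrow 0$ and identifying, via Yoneda's lemma and left exactness of $\Hom(M,-)$, the kernel of $\Hom(M,\alpha)$ with $\Hom_{(R\modl)^{\op}}(\defect(\mathcal{G}),M)$; this makes the argument self-contained at the cost of the naturality verification you flag. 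That verification is the one point worth tightening: since your bijection is built from a chosen presentation of $\mathcal{G}$, naturality in $\mathcal{G}$ requires checking independence of that choice; a clean way is to observe that both $\Hom_{(R\modl)^{\op}}(\defect(-),M)$ and $\Hom_{R\modl\modl}(-,\yoneda(M))$ turn cokernels of maps between representables into kernels, and that your Yoneda identification is natural on the full subcategory of representables, so the comparison extends uniquely to all of $R\modl\modl$ --- this is precisely what Russell's lemma packages. For the counit, your formal argument (a right adjoint is fully faithful exactly when the counit is an isomorphism, and $\yoneda$ is fully faithful by Yoneda's lemma) is a legitimate alternative to the paper's direct computation $\defect(\yoneda(M)) \cong \kernel(M \rightarrow 0) \cong M$ from \ref{example:defect}, and your parenthetical remark in fact reproduces that computation. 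In short: same route, but you supply a proof where the paper supplies a citation, which buys self-containedness at the price of one standard but nontrivial naturality check.
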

\begin{proof}
The adjunction is a special case of \cite[Section 5, Lemma 8]{Russell16}.
The natural isomorphism
\[
\defect( \yoneda( M ) ) \cong \defect( \Hom( M, - ) ) \cong M
\]
for $M \in R\modl$ can be seen directly from the way we compute the defect in \ref{example:defect}.
The stated equivalence of categories follows from \ref{para:serre_quots_via_adjunctions}.
\end{proof}

Next, we identify the Serre quotient of \ref{theorem:defect_adjunction} with the category $\Beh{R}{\M}$ for a particular class of modules $M \in R\Modl$.

\begin{definition}
A module $M \in R\Modl$ is called \textbf{fp-injective} or \textbf{absolutely pure} if every short exact sequence in $R\Modl$ of the form
\begin{center}
       \begin{tikzpicture}[mylabel/.style={fill=white}]
        \coordinate (r) at (2,0);
        \node (A) {$0$};
        \node (B) at ($(A)+(r)$) {$N''$};
        \node (C) at ($(B) + (r)$) {$N'$};
        \node (D) at ($(C) + (r)$) {$N$};
        \node (E) at ($(D) + (r)$) {$0$};
        \draw[->,thick] (A) to (B);
        \draw[->,thick] (B) to (C);
        \draw[->,thick] (C) to (D);
        \draw[->,thick] (D) to (E);
        \end{tikzpicture}
\end{center}
with $N$ a finitely presented module is $\Hom(-,M)$-exact, i.e., the sequence
\begin{center}
       \begin{tikzpicture}[mylabel/.style={fill=white}]
        \coordinate (r) at (4,0);
        \node (A) {$0$};
        \node (B) at ($(A)+0.5*(r)$) {$\Hom(N'',M)$};
        \node (C) at ($(B) + 0.75*(r)$) {$\Hom(N',M)$};
        \node (D) at ($(C) + 0.75*(r)$) {$\Hom(N,M)$};
        \node (E) at ($(D) + 0.5*(r)$) {$0$};
        \draw[<-,thick] (A) to (B);
        \draw[<-,thick] (B) to (C);
        \draw[<-,thick] (C) to (D);
        \draw[<-,thick] (D) to (E);
        \end{tikzpicture}
\end{center}
is exact.
\end{definition}

\begin{remark}\label{remark:fp-injective_for_left_coherent}
It is easy to see that a module $M \in R\Modl$ is fp-injective if and only if $\Ext^{1}(N,M) \simeq 0$ for all finitely presented modules $N \in R\modl$ (see, e.g., \cite[Proposition 2.3.1]{PrestPSL}).

If $R$ is left coherent, then $M \in R\Modl$ is fp-injective if and only if every short exact sequence within the category of finitely presented modules
\begin{center}
       \begin{tikzpicture}[mylabel/.style={fill=white}]
        \coordinate (r) at (2,0);
        \node (A) {$0$};
        \node (B) at ($(A)+(r)$) {$N''$};
        \node (C) at ($(B) + (r)$) {$N'$};
        \node (D) at ($(C) + (r)$) {$N$};
        \node (E) at ($(D) + (r)$) {$0$};
        \draw[->,thick] (A) to (B);
        \draw[->,thick] (B) to (C);
        \draw[->,thick] (C) to (D);
        \draw[->,thick] (D) to (E);
        \end{tikzpicture}
\end{center}
is $\Hom(-,M)$-exact, i.e., mapped to an exact sequence by $\Hom(-,M)$.
This follows from the computation of $\Ext^{1}(N,M)$ for a finitely presented module $N$ that uses a projective resolution within $R\modl$.

Since $\Hom(-,M)$ is always left exact, we obtain the following useful characterization:
if $R$ is left coherent, then $M \in R\Modl$ is fp-injective if and only if for every monomorphism $N'' \rightarrow N' \in R\modl$ of finitely presented modules, the map
\[
\Hom(N', M) \rightarrow \Hom( N'', M)
\]
is surjective.
\end{remark}

\begin{definition}\label{definition:fpcogenerator}
A module $M \in R\Modl$ is called 
\begin{itemize}
    \item a \textbf{cogenerator} if
    \[
    \Hom(-,M): (R\Modl)^{\op} \rightarrow \Ab
    \]
    is a faithful functor. Here, the domain of the hom functor is given by all $R$-modules.
    \item an \textbf{fp-cogenerator}\footnote{This notion occurs for example in \cite{Garkusha}.} if
    \[
    \Hom(-,M): (R\modl)^{\op} \rightarrow \Ab
    \]
    is a faithful functor. Here, the domain of the hom functor is given only by finitely presented modules.
\end{itemize}
\end{definition}

\begin{lemma}\label{lemma:test_fpcogenerator}
Let $M \in R\Modl$.
If $M$ is an fp-cogenerator, then
\[
\Hom(N,M) \cong 0 \text{\hspace{1em} if and only if \hspace{1em}} N \cong 0
\]
for all $N \in R\modl$. The converse holds if $R$ is left coherent and $M$ is fp-injective. 
\end{lemma}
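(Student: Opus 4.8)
The plan is to prove the two implications separately, treating the converse (the ``if and only if'' under the left coherence and fp-injectivity hypotheses) as the substantial part.

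First I would dispatch the forward claim, that fp-cogeneration implies the stated zero-test. The direction $N \cong 0 \Rightarrow \Hom(N,M) \cong 0$ is immediate. For the other direction, suppose $\Hom(N,M) \cong 0$. Since $\Hom(-,M)$ is additive, it sends $\id_N$ and the zero endomorphism $0_N$ to $\id_{\Hom(N,M)}$ and $0$ respectively; but $\id_{\Hom(N,M)} = 0$ because $\Hom(N,M) \cong 0$, so $\Hom(\id_N, M) = \Hom(0_N, M)$. Faithfulness of $\Hom(-,M)$ then forces $\id_N = 0_N$, i.e.\ $N \cong 0$.

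For the converse I would first reduce faithfulness to a kernel condition: since $\Hom(-,M)$ is additive, it is faithful exactly when every morphism $f$ in $R\modl$ with $\Hom(f,M) = 0$ already vanishes. So let $f \colon N \to N'$ in $R\modl$ satisfy $\Hom(f,M) = 0$. Because $R$ is left coherent, $R\modl$ is abelian (see \ref{para:coherent_rings}), so $f$ factors as $f = \iota \circ \pi$ with $\pi \colon N \twoheadrightarrow \image(f)$ an epimorphism, $\iota \colon \image(f) \hookrightarrow N'$ a monomorphism, and $\image(f) \in R\modl$. The key step is to show $\Hom(\image(f), M) \cong 0$. Applying the contravariant functor $\Hom(-,M)$ to $f = \iota \circ \pi$ gives $\Hom(f,M) = \Hom(\pi,M) \circ \Hom(\iota,M) = 0$. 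Here $\Hom(\pi,M)$ is injective because $\Hom(-,M)$ is left exact and hence sends the epimorphism $\pi$ to a monomorphism, while $\Hom(\iota,M)$ is surjective by the characterization of fp-injectivity for left coherent rings in \ref{remark:fp-injective_for_left_coherent}, applied to the monomorphism $\iota$. Since $\Hom(\iota,M)$ is onto, its image is all of $\Hom(\image(f), M)$; since $\Hom(\pi,M)$ annihilates this image yet is injective, we conclude $\Hom(\image(f), M) \cong 0$. The zero-test just established then yields $\image(f) \cong 0$, so $f = 0$, proving faithfulness.

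The hard part is the interplay in this last step: the two opposite exactness properties of $\Hom(-,M)$ must be combined, surjectivity at the monomorphism $\iota$ coming from fp-injectivity and injectivity at the epimorphism $\pi$ coming from the general left exactness of $\Hom(-,M)$, so that the vanishing of the composite forces the intermediate group $\Hom(\image(f),M)$ to vanish. The left coherence hypothesis is essential throughout, both to guarantee that $\image(f)$ lands in $R\modl$ and to license the surjectivity statement in \ref{remark:fp-injective_for_left_coherent}.
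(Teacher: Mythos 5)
Your proof is correct and takes essentially the same route as the paper: the paper simply invokes its Appendix lemma that a functor which is exact between abelian categories is faithful if and only if it reflects zero objects, applied to $\Hom(-,M)\colon (R\modl)^{\op} \rightarrow \Ab$ (abelian source by coherence, exactness by fp-injectivity). Your argument is exactly that lemma's proof inlined — the identity trick $\Hom(\id_N,M)=0 \Rightarrow \id_N=0$ for the forward direction, and the epi–mono factorization through $\image(f)$ (i.e., the fact that an exact functor detects vanishing via images) for the converse.
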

\begin{proof}
Follows from Appendix \ref{lemma:faithful_implies_zero_test}.
Note that if $R$ is left coherent, $R\modl$ is abelian (Paragraph \ref{para:coherent_rings}), and if $M$ is fp-injective, $\Hom(-,M): (R\modl)^{\op} \rightarrow \Ab$ is an exact functor by \ref{remark:fp-injective_for_left_coherent}.
\end{proof}

\begin{ex}\label{ex:von_neumann}
Let $R$ be a von Neumann regular ring, i.e., for every $r \in R$ there is an $s \in R$ such that $r = rsr$.
Von Neumann regular rings and their modules admit a rich structure theory \cite[Subsection 2.3.4]{PrestPSL}:
\begin{itemize}
    \item $R$ is left and right coherent.
    \item Every $M \in R\Modl$ is fp-injective.
    \item Every $M \in R\modl$ is projective.
\end{itemize}
It follows that ${_R}R$ is an fp-cogenerator (Lemma \ref{lemma:test_fpcogenerator}): since every $M \in R\modl$ is projective, it is a direct summand of ${{_R}R}^{n}$ for some $n \in \Nzero$ and thus
\[
\Hom(M, {_R}R) \cong 0 \hspace{1em}\text{if and only if}\hspace{1em} M \cong 0.
\]
We remark that $R$ is not necessarily a cogenerator. For example, let $K$ be a field, and let $V := \bigoplus_{i=1}^{\infty} K$ be a vector space of infinite dimension. Then $R := \End_{K}(V)$ is an example of a von Neumann regular ring that is injective as a module over itself \cite[Corollary 1.23]{Goodearl}.
Set
\[
M := R/\langle \psi \in R \mid \dim( \image(\psi) ) < \infty \rangle.
\]
Then
\begin{align*}
\Hom( M, R ) \cong \{ \chi \in R \mid \psi \circ \chi = 0 \text{~for all $\psi$ such that $\dim( \image(\psi) ) < \infty$}\} \cong 0
\end{align*}
but $M \not\cong 0$ since $V$ is infinite-dimensional.
\end{ex}

\begin{theorem}\label{theorem:kernel_of_defect}
Let $R$ be a left coherent ring and let $\M \in R\Modl$.
\begin{enumerate}
\item Then 
\[
\kernel( \defect ) \subseteq \kernel(\ev{\M})
\]
if and only if $\M$ is fp-injective.
\item Moreover, 
\[
\kernel( \defect ) = \kernel(\ev{\M})
\]
if and only if $\M$ is fp-injective and an fp-cogenerator.
\end{enumerate}
\end{theorem}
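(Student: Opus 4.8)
The plan is to translate both kernels into concrete conditions on a presentation and then match them against the characterization of fp-injectivity in \ref{remark:fp-injective_for_left_coherent}. Fix $\mathcal{G} \in R\modl\modl$ together with a presentation given by a morphism $\alpha \colon N \to N'$ in $R\modl$ (recall that $R\modl$ is abelian since $R$ is left coherent). By \ref{example:defect} we have $\defect(\mathcal{G}) \cong \kernel(\alpha)$, so $\mathcal{G} \in \kernel(\defect)$ if and only if $\alpha$ is a monomorphism. By the componentwise computation in \ref{para:compute_ev}, $\mathcal{G}(\M) \cong \cokernel\big(\Hom(\alpha,\M)\colon \Hom(N',\M) \to \Hom(N,\M)\big)$, so $\mathcal{G} \in \kernel(\ev{\M})$ if and only if $\Hom(\alpha,\M)$ is surjective. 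Thus the containment $\kernel(\defect) \subseteq \kernel(\ev{\M})$ says precisely that $\Hom(\alpha,\M)$ is surjective whenever $\alpha$ is a monomorphism.

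For part (1) this last reformulation is essentially \ref{remark:fp-injective_for_left_coherent}. For the backward direction I would assume $\M$ fp-injective and read off surjectivity of $\Hom(\alpha,\M)$ for mono $\alpha$ directly from that remark. For the forward direction I would, given an arbitrary monomorphism $\alpha\colon N \hookrightarrow N'$ in $R\modl$, form the finitely presented functor $\mathcal{G} := \cokernel(\Hom(\alpha,-))$; it has $\alpha$ as a presentation, hence lies in $\kernel(\defect)$, hence in $\kernel(\ev{\M})$ by hypothesis, which forces $\Hom(\alpha,\M)$ to be surjective, and the remark then yields fp-injectivity.

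For part (2) the \emph{new} content is the reverse containment $\kernel(\ev{\M}) \subseteq \kernel(\defect)$. The key computation I would establish is that, when $\M$ is fp-injective, evaluation is governed by the defect through an (objectwise) isomorphism $\mathcal{G}(\M) \cong \Hom(\defect(\mathcal{G}),\M)$. To prove this I would factor $\alpha$ as $N \xrightarrow{\pi} \image(\alpha) \xrightarrow{j} N'$ with $\pi$ epi and $j$ mono; applying $\Hom(-,\M)$ and using fp-injectivity on the monomorphism $j$ shows $\image(\Hom(\alpha,\M)) = \image(\Hom(\pi,\M))$, while left-exactness of $\Hom(-,\M)$ applied to $0 \to \kernel(\alpha) \to N \xrightarrow{\pi} \image(\alpha) \to 0$ identifies $\image(\Hom(\pi,\M))$ with $\kernel\big(\Hom(\kernel(\alpha)\hookrightarrow N,\M)\big)$; a second application of fp-injectivity, now to the monomorphism $\kernel(\alpha) \hookrightarrow N$, makes that last map surjective, so $\cokernel(\Hom(\alpha,\M)) \cong \Hom(\kernel(\alpha),\M)$. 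Given this, the backward direction of (2) is immediate: if $\M$ is fp-injective and an fp-cogenerator and $\mathcal{G} \in \kernel(\ev{\M})$, then $\Hom(\defect(\mathcal{G}),\M) \cong \mathcal{G}(\M) = 0$ with $\defect(\mathcal{G}) \in R\modl$, so $\defect(\mathcal{G}) = 0$ by \ref{lemma:test_fpcogenerator}. For the forward direction of (2), part (1) already gives fp-injectivity from $\kernel(\defect) \subseteq \kernel(\ev{\M})$; to obtain the fp-cogenerator property I would test on representables: for $N \in R\modl$ with $\Hom(N,\M) = 0$, the functor $\yoneda(N) = \Hom(N,-)$ lies in $\kernel(\ev{\M}) = \kernel(\defect)$, and since $\defect(\yoneda(N)) \cong N$ by \ref{theorem:defect_adjunction}, we get $N = 0$; \ref{lemma:test_fpcogenerator} then upgrades this to faithfulness of $\Hom(-,\M)$.

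The main obstacle is the identification $\mathcal{G}(\M) \cong \Hom(\defect(\mathcal{G}),\M)$: this is where both halves of the hypotheses on $\M$ genuinely enter, and it requires using fp-injectivity twice — once for the image-monomorphism $j$ and once for the kernel-monomorphism $\kernel(\alpha) \hookrightarrow N$ — in order to collapse $\cokernel(\Hom(\alpha,\M))$ onto $\Hom(\kernel(\alpha),\M)$. Everything else is bookkeeping with presentations together with the already-established adjunction of \ref{theorem:defect_adjunction} and the cogenerator test of \ref{lemma:test_fpcogenerator}.
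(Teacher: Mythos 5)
Your proposal is correct. Part (1) is essentially the paper's own argument: both reduce membership in $\kernel(\defect)$ to ``admits a presentation by a monomorphism'' and membership in $\kernel(\ev{\M})$ to surjectivity of $\Hom(\alpha,\M)$ for a presenting morphism $\alpha$, and then invoke the characterization of fp-injectivity in \ref{remark:fp-injective_for_left_coherent}. For part (2), however, you take a genuinely different route. The paper stays at the level of Serre quotients: fp-injectivity yields an induced functor $\frac{R\modl\modl}{\kernel(\defect)} \rightarrow \frac{R\modl\modl}{\kernel(\ev{\M})}$; by \ref{theorem:defect_adjunction} the source is equivalent to $(R\modl)^{\op}$, so every object there is isomorphic to some $\Hom(N,-)$, which dies in $\Beh{R}{\M}$ exactly when $\Hom(N,\M) \cong 0$ by \ref{lemma:desirable_features_Beh_zero}; equality of kernels is then read off as the fp-cogenerator condition. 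You instead prove an objectwise identity: for fp-injective $\M$ one has $\mathcal{G}(\M) \cong \Hom(\defect(\mathcal{G}),\M)$, obtained by factoring a presenting morphism $\alpha$ through its image and applying fp-injectivity twice, to $\image(\alpha) \hookrightarrow N'$ and to $\kernel(\alpha) \hookrightarrow N$ (both legitimate instances since left coherence keeps $\image(\alpha)$ and $\kernel(\alpha)$ finitely presented). This identity makes the reverse containment $\kernel(\ev{\M}) \subseteq \kernel(\defect)$ immediate from \ref{lemma:test_fpcogenerator}, and your test on representables recovers the fp-cogenerator property in the forward direction. Your computation is more elementary and self-contained: it needs neither the Serre-quotient formalism nor the full equivalence of \ref{theorem:defect_adjunction}, only the counit formula $\defect(\yoneda(N)) \cong N$, and it isolates a reusable fact (evaluation at an fp-injective module factors through the defect) that the paper leaves implicit. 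The paper's argument is shorter given the machinery already in place and makes transparent that $\Beh{R}{\M}$ is a further Serre quotient of $(R\modl)^{\op}$.
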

\begin{proof}
Let $\mathcal{G} \in R\modl\modl$ with a presentation given by $\alpha: N \rightarrow N' \in R\modl$.
Then by \ref{example:defect}, we have
\[
\defect( \mathcal{G} ) \cong \kernel( \alpha )
\]
which is zero if and only if $\alpha$ is a mono.
Thus, $\kernel( \defect )$ consists of those $\mathcal{G}$ that admit a presentation by a mono.
Now, the first statementfollows from our characterization of fp-injective modules in \ref{remark:fp-injective_for_left_coherent}.
In particular, whenever $M$ is fp-injective, we have a commutative diagram (up to natural isomorphism) of the following form, with the arrows provided by the canonical functors:
\begin{equation}\label{equation:defect_vs_eval}
       \begin{tikzpicture}[mylabel/.style={fill=white},baseline=($(A) + 0.5*(d)$)]
        \coordinate (r) at (2,0);
        \coordinate (d) at (0,-2);
        \node (A) {$R\modl\modl$};
        \node (B) at ($(A)-(r)+(d)$) {$\frac{R\modl\modl}{\kernel(\defect)}$};
        \node (C) at ($(B) + 2*(r)$) {$\frac{R\modl\modl}{\kernel(\ev{M})}$};
        \draw[->,thick] (A) to (B);
        \draw[->,thick] (A) to (C);
        \draw[->,thick] (B) to (C);
        \end{tikzpicture}
\end{equation}
By \ref{theorem:defect_adjunction}, $\frac{R\modl\modl}{\kernel(\defect)} \simeq (R\modl)^{\op}$, in particular, every object in $\frac{R\modl\modl}{\kernel(\defect)}$ is isomorphic to $\Hom(N,-)$ for some $N \in R\modl$. The functor given by the horizontal arrow in \eqref{equation:defect_vs_eval} maps $\Hom(N,-)$ to zero if and only if $\Hom(N,M)$ is zero (by \ref{lemma:desirable_features_Beh_zero}).
Thus, $\kernel( \defect ) = \kernel(\ev{\M})$ if and only if $M$ is an fp-cogenerator.
\end{proof}

\begin{corollary}[Module-behavior duality]\label{corollary:beh_rmodop_equivalence}
Let $R$ be a left coherent ring, and let $\M \in R\Modl$ be fp-injective and an fp-cogenerator. Then
\[
\Beh{R}{\M} \simeq (R\modl)^{\op}.
\]
\end{corollary}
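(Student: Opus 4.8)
The plan is to deduce this corollary directly by chaining together the two immediately preceding theorems, so that essentially no new work is required. By definition we have $\Beh{R}{\M} = \frac{R\modl\modl}{\kernel(\ev{\M})}$, so the entire task reduces to identifying the Serre subcategory $\kernel(\ev{\M})$ that we are quotienting out and recognizing it as one we already understand.

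First I would invoke Theorem \ref{theorem:kernel_of_defect}(2): since $R$ is left coherent and $\M$ is assumed to be both fp-injective and an fp-cogenerator, the hypotheses of its second part are met, and we obtain the equality of Serre subcategories $\kernel(\defect) = \kernel(\ev{\M})$ inside $R\modl\modl$. Because the Serre quotient of an abelian category depends only on the Serre subcategory being annihilated, this equality of kernels immediately yields an equality of quotient categories $\Beh{R}{\M} = \frac{R\modl\modl}{\kernel(\defect)}$.

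Second, I would apply the final assertion of Theorem \ref{theorem:defect_adjunction}, which already supplies the equivalence $\frac{R\modl\modl}{\kernel(\defect)} \simeq (R\modl)^{\op}$ for any left coherent ring $R$; this in turn rests on the adjunction between $\defect$ and the contravariant Yoneda embedding together with the localization criterion recorded in \ref{para:serre_quots_via_adjunctions}. Chaining the two identifications then gives $\Beh{R}{\M} \simeq (R\modl)^{\op}$, as claimed.

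There is no genuine obstacle here: all the substantive content lives in Theorems \ref{theorem:kernel_of_defect} and \ref{theorem:defect_adjunction}, where the fp-injectivity controls the inclusion $\kernel(\defect) \subseteq \kernel(\ev{\M})$ and the fp-cogenerator property supplies the reverse inclusion. The only point that requires a moment of care is the observation that passing to a Serre quotient is insensitive to anything beyond the kernel subcategory, so that equal kernels produce literally the same quotient category; this is immediate from the universal property in \ref{para:up_of_serre_quotients}.
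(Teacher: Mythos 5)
Your proposal is correct and follows precisely the paper's own argument: the paper's proof is the one-line ``This follows from \ref{theorem:kernel_of_defect} and \ref{theorem:defect_adjunction},'' i.e., use \ref{theorem:kernel_of_defect}(2) to identify $\kernel(\defect) = \kernel(\ev{\M})$ and then apply the equivalence $\frac{R\modl\modl}{\kernel(\defect)} \simeq (R\modl)^{\op}$ from \ref{theorem:defect_adjunction}. Your additional observation that equal Serre subcategories yield the same quotient category is a fine (and implicit in the paper) justification of the chaining step.
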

\begin{proof}
This follows from \ref{theorem:kernel_of_defect} and \ref{theorem:defect_adjunction}.
\end{proof}

The following corollary corresponds to the second key problem described by Wood in \cite{WoodKey}.
It is a reformulation of the model theoretic fact that fp-injective left modules over left coherent rings have elimination of quantifiers - this is due to Eklof and Sabbagh \cite{ES70}.

\begin{corollary}[Elimination of latent variables]
Let $R$ be a left coherent ring, and let $\M \in R\Modl$ be fp-injective and an fp-cogenerator. Let $k \in \Nzero$.
Every subobject of $\asBeh{\ff}{M}^{\oplus k}$, i.e., of the $k$-fold direct sum of the abstract signal space, is of the form
\[
\asBeh{\mathcal{B}( \phi, - )}{M} \xrightarrow{\iota_{M}} \asBeh{\ff}{M}^{\oplus k}
\]
for $\phi$ a \emph{quantifier-free} pp formula and $\iota$ the canonical inclusion ${\mathcal{B}( \phi, - )} \hookrightarrow \ff^{\oplus k}$.
\end{corollary}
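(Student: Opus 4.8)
The plan is to run the argument through the module-behavior duality of Corollary~\ref{corollary:beh_rmodop_equivalence}, under which the absence of latent variables corresponds to the fact that a quotient of a free module is cut out by a single matrix. Under the present hypotheses the equivalence $\Beh{R}{M} \simeq (R\modl)^{\op}$ is the one induced by the contravariant defect (combine Theorem~\ref{theorem:defect_adjunction} with Theorem~\ref{theorem:kernel_of_defect}), so it sends an abstract behavior $\asBeh{\mathcal{G}}{M}$ to $\defect(\mathcal{G})$. This sharpens Theorem~\ref{theorem:subobjects_of_ff} by showing that the pp formula occurring there may be chosen quantifier-free.

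First I would identify the object $\asBeh{\ff}{M}^{\oplus k}$ with a free module on the dual side. By the counit isomorphism of Theorem~\ref{theorem:defect_adjunction} we have $\defect(\ff) \cong \defect(\Hom({_R}R,-)) \cong {_R}R = R^{1\times 1}$, and since the equivalence is additive it carries $\asBeh{\ff}{M}^{\oplus k}$ to $R^{1\times k}$. As the equivalence is contravariant, it induces an isomorphism between the poset of subobjects of $\asBeh{\ff}{M}^{\oplus k}$ in $\Beh{R}{M}$ and the poset of quotient objects of $R^{1\times k}$ in $R\modl$. Thus every subobject $\mathcal{U} \hookrightarrow \asBeh{\ff}{M}^{\oplus k}$ corresponds to a unique quotient object $R^{1\times k} \twoheadrightarrow Q$ in $R\modl$.

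Next I would make this quotient object explicit. Since $R$ is left coherent, $R\modl$ is abelian and its inclusion into $R\Modl$ is exact (Paragraph~\ref{para:coherent_rings}); hence the kernel $U$ of the epimorphism $R^{1\times k} \twoheadrightarrow Q$ is a finitely generated submodule of $R^{1\times k}$, say the row space of a matrix $A \in R^{m\times k}$. Then $Q \cong \cokernel(R^{1\times m} \xrightarrow{A} R^{1\times k})$ via the canonical projection, so as a quotient object $R^{1\times k} \twoheadrightarrow Q$ equals $R^{1\times k} \twoheadrightarrow \cokernel(A)$. Transporting this back along the inverse equivalence, namely the contravariant Yoneda embedding $\yoneda$ with $N \mapsto \Hom(N,-)$, the quotient object corresponds to the subobject $\asBeh{\Hom(\cokernel(A),-)}{M} \hookrightarrow \asBeh{\ff}{M}^{\oplus k}$ arising from $\Hom(\cokernel(A),-) \hookrightarrow \Hom(R^{1\times k},-) = \ff^{\oplus k}$. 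Finally, the Malgrange isomorphism (Lemma~\ref{lemma:malgrange_iso}) identifies $\Hom(\cokernel(A),-)$ with the functor of trajectories $\mathcal{B}({_A}\phi,-)$ of the quantifier-free pp formula ${_A}\phi$ defined by $Ax = 0$, compatibly with the canonical inclusions into $\ff^{\oplus k}$. Hence $\mathcal{U}$ is of the asserted form with $\phi$ quantifier-free.

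The step I expect to require the most care is keeping track of the identifications at the level of subobjects rather than merely up to isomorphism: one must verify that the embedding produced really coincides with the canonical inclusion $\mathcal{B}({_A}\phi,-) \hookrightarrow \ff^{\oplus k}$. I would settle this with Corollary~\ref{corollary:desirable_features_at_most_one_lift_subobj}, by which a subobject of $\asBeh{\ff}{M}^{\oplus k}$ is uniquely determined by the abelian subgroup it cuts out in $M^{k\times 1}$. Since $\mathcal{B}({_A}\phi,M)$ is precisely the solution set $\{x \in M^{k\times 1} \mid Ax = 0\}$, the two subobjects induce the same subgroup and hence agree; this pins down the identification and absorbs the ambiguity in the choice of $A$.
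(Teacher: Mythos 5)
Your proposal is correct and takes essentially the same route as the paper's own proof: both pass through the module-behavior duality of Corollary \ref{corollary:beh_rmodop_equivalence} to translate subobjects of $\asBeh{\ff}{M}^{\oplus k}$ into quotient objects $R^{1 \times k} \twoheadrightarrow N$ in $R\modl$ and then back into subobjects of the form $\asBeh{\Hom(N,-)}{M} \hookrightarrow \asBeh{\ff}{M}^{\oplus k}$. Your write-up merely fills in the details the paper leaves implicit, namely writing $N \cong \cokernel(A)$ using coherence, invoking the Malgrange isomorphism (Lemma \ref{lemma:malgrange_iso}) to recognize $\Hom(\cokernel(A),-)$ as $\mathcal{B}({_A}\phi,-)$, and pinning down the identification at the level of subobjects via Corollary \ref{corollary:desirable_features_at_most_one_lift_subobj}.
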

\begin{proof}
By \ref{corollary:beh_rmodop_equivalence}, subobjects of $\asBeh{\ff}{M}^{\oplus k}$ correspond to quotient objects $R^{1 \times k} \twoheadrightarrow N$, which in turn translate to subobjects of the form \[
\asBeh{\Hom( N, - )}{M} \hookrightarrow \asBeh{\ff}{M}^{\oplus k}.
\]
The claim follows.
\end{proof}

\begin{ex}
Let $R := \R[\partial_1, \dots, \partial_n]$ act on $M := \mathcal{C}^{\infty}(\R^n,\R)$, where $n \in \Nzero$ (see \ref{ex:pdes}).
Then $M$ is an injective cogenerator. This powerful result is due to Oberst \cite{Ob}.
In particular, we can use the module-behavior duality \ref{corollary:beh_rmodop_equivalence} as a tool for working with linear partial differential equations with constant coefficients when we want the solutions to lie in $M$.
\end{ex}

\begin{ex}
Let $R$ be a von Neumann regular ring.
Then $_{R}R$ is fp-injective and an fp-cogenerator by \ref{ex:von_neumann}.
Thus, we have
\[
\Beh{R}{_{R}R} \simeq (R\modl)^{\op}
\]
by \ref{corollary:beh_rmodop_equivalence}.
Note that by \ref{ex:von_neumann}, $_{R}R$ is not necessarily a cogenerator.
\end{ex}

\begin{remark}[Inclusion of behaviors]\label{remark:first_key_problem}
We close this subsection with a remark about the first key problem introduced by Wood in \cite{WoodKey}.
Let $R$ be an arbitrary ring and $M \in R\Modl$.
Note that $R\modl$ is not necessarily an abelian category.
The following holds:
\begin{enumerate}
    \item The functor $\Hom(-,M)$ respects kernels (it maps cokernels in $R\modl$ to kernels in $\Ab$).
    \item Each subobject $U \hookrightarrow N$ in $(R\modl)^{\op}$ arises as the kernel of a morphism. 
    This means that every quotient $N' \in R\modl$ of $N$ arises as the cokernel of a morphism in $R\modl$.
    And this always holds since any short exact sequence of $R$-modules
    \begin{center}
       \begin{tikzpicture}[mylabel/.style={fill=white}]
        \coordinate (r) at (2,0);
        \node (A) {$0$};
        \node (B) at ($(A)+(r)$) {$N''$};
        \node (C) at ($(B) + (r)$) {$N$};
        \node (D) at ($(C) + (r)$) {$N'$};
        \node (E) at ($(D) + (r)$) {$0$};
        \draw[->,thick] (A) to (B);
        \draw[->,thick] (B) to (C);
        \draw[->,thick] (C) to (D);
        \draw[->,thick] (D) to (E);
        \end{tikzpicture}
    \end{center}
    with $N'$ finitely presented and $N$ finitely generated implies that $N''$ is finitely generated \cite[{Tag 0517}]{stacks-project}.
\end{enumerate}

Moreover, if $M$ is an fp-cogenerator, we have:
\begin{enumerate}
    \setcounter{enumi}{2}
    \item The functor
    \[
    \Hom(-,M): (R\modl)^{\op} \rightarrow \Ab
    \]
    is faithful.
\end{enumerate}

By Appendix \ref{corollary:subobjects_fully_faithful}, these three categorical properties suffice to see that inclusion of behaviors can be decided by mere computations within $(R\modl)^{\op}$:
for $N \in R\modl$ and $\epsilon': N \twoheadrightarrow N'$, $\epsilon'': N \twoheadrightarrow N''$ two quotient objects, we have an inclusion
\[
\Hom( N', M ) \subseteq \Hom( N'',M )
\]
as subobjects of $\Hom( N, M )$ if and only if there exists an $\alpha: N'' \rightarrow N' \in R\modl$ such that
\[
\epsilon' = \alpha \circ \epsilon''.
\]
\end{remark}

\subsection{The Serre quotient induced by the covariant defect}

The discussion in this subsection is dual to the one in Subsection \ref{subsection:quo_ind_by_defect}.

\begin{para}
Let $R$ be a right coherent ring.
Since $\modr R$ is abelian, taking the covariant defect (see \ref{example:covariant_defect}) corestricts to a functor of type
\[
\codefect: R\modl\modl \rightarrow (\modr R)
\]
\end{para}

For our convenience, we state the dual version of \ref{theorem:defect_adjunction}.

\begin{theorem}\label{theorem:codefect_adjunction}
Let $R$ be a right coherent ring.
Let
\begin{align*}
\mathrm{Tensor}: (\modr R) &\rightarrow R\modl\modl \\
N &\mapsto (N \otimes -)
\end{align*}
denote the functor that maps a finitely presetend module to its tensor functor.
Then taking the covariant defect is right adjoint to $\mathrm{Tensor}$:
\begin{equation}\label{eq:codefect_adj_setup}
\begin{tikzpicture}[mylabel/.style={fill=white}, baseline=(A)]
      \coordinate (r) at (6,0);
      \node (A) {$R\modl\modl$};
      \node (B) at ($(A) + (r)$) {$(\modr R)$};
      \draw[->, out = 30, in = 180-30] (A) to node[mylabel]{$\codefect$} (B);
      \draw[<-, out = -30, in = 180+30] (A) to node[mylabel]{$\mathrm{Tensor}$} (B);
      \node[rotate=-90] (t) at ($(A) + 0.5*(r)$) {$\vdash$};
\end{tikzpicture}
\end{equation}
Moreover, the unit of this adjunction
\[
N \xrightarrow{\simeq} \codefect( N \otimes - )
\]
is an isomorphism natural in $N \in \modr R$.
In particular, the covariant defect $\codefect$ induces an equivalence of categories
\[
\frac{R\modl\modl}{\kernel( \codefect )} \simeq (\modr R).
\]
\end{theorem}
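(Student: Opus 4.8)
The plan is to obtain the whole statement by dualizing Theorem~\ref{theorem:defect_adjunction} through the Auslander-Gruson-Jensen duality of \ref{theorem:agj_duality}, which is precisely the way the covariant defect was defined in \ref{example:covariant_defect}. Since $R$ is right coherent, the opposite ring $R^{\op}$ is left coherent by \ref{para:coherent_rings}, so Theorem~\ref{theorem:defect_adjunction} applies verbatim to $R^{\op}$ and supplies an adjunction $\defect_{R^{\op}} \dashv \yoneda_{R^{\op}}$ between $R^{\op}\modl\modl$ and $(R^{\op}\modl)^{\op}$ whose counit $\defect_{R^{\op}}(\yoneda_{R^{\op}}(N)) \xrightarrow{\simeq} N$ is a natural isomorphism.

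First I would transport this adjunction to $R$ along two identifications: the equivalence $\modr R \simeq R^{\op}\modl$ of \ref{remark:left_rop_modules}, and the Auslander-Gruson-Jensen equivalence $R^{\op}\modl\modl \simeq (R\modl\modl)^{\op}$ obtained by instantiating \ref{theorem:agj_duality} at $R^{\op}$ (using $\modr{R^{\op}} = R\modl$). Passing to opposite categories swaps a left adjoint for a right adjoint and a counit for a unit, so $\defect_{R^{\op}}$ transports to a right adjoint and $\yoneda_{R^{\op}}$ to a left adjoint. Under these identifications $\defect_{R^{\op}}$ becomes exactly $\codefect$, and, because the duality sends hom functors to tensor functors by \ref{para:duality_hom_tensor}, the Yoneda embedding $\yoneda_{R^{\op}}$ becomes the functor $\mathrm{Tensor}$. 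This yields the asserted adjunction $\mathrm{Tensor} \dashv \codefect$.

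For the unit I would avoid chasing it through the duality and instead verify $N \xrightarrow{\simeq} \codefect(N \otimes -)$ directly, mirroring the way the counit was checked in \ref{theorem:defect_adjunction}. Using the alternative description of the covariant defect as evaluation at ${_R}R$ from \ref{example:covariant_defect}, one gets $\codefect(N \otimes -) \cong (N \otimes -)({_R}R) = N \otimes_R R \cong N$ naturally, the canonical map $n \mapsto n \otimes 1$ being the unit; it is a map of right $R$-modules because the right action on $\codefect(N \otimes -)$ is by definition induced by $(N \otimes -)({_R}R \xrightarrow{r} {_R}R)$. Since $\codefect$ is an evaluation functor it is exact (\ref{example:covariant_defect}), so once the unit of $\mathrm{Tensor} \dashv \codefect$ is known to be a natural isomorphism, the final equivalence $\frac{R\modl\modl}{\kernel(\codefect)} \simeq \modr R$ drops out of the dual half of \ref{para:serre_quots_via_adjunctions}, namely the case of an exact functor with a left adjoint whose unit is invertible.

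The hard part will be bookkeeping rather than mathematics: one must track the functors faithfully along the chain passing from $R$ to $R^{\op}$, then through the Auslander-Gruson-Jensen duality, then to opposite categories, and confirm that the transported $\defect_{R^{\op}}$ and $\yoneda_{R^{\op}}$ agree, up to natural isomorphism, with $\codefect$ and $\mathrm{Tensor}$, and that the counit over $R^{\op}$ matches the unit over $R$. Because the duality is contravariant, it is easy to misplace a variance or to swap the wrong side of the adjunction; the safest remedy is to pin down the transported functors on the generating forgetful functors, exactly as in the proof of \ref{theorem:agj_duality}, which by the relevant universal properties is all that needs to be checked.
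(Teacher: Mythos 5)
Your proposal is correct and follows essentially the same route as the paper: the paper's proof is exactly to apply Theorem~\ref{theorem:defect_adjunction} to $R^{\op}$ (left coherent since $R$ is right coherent), transport through the equivalence $\modr R \simeq R^{\op}\modl$ and the Auslander--Gruson--Jensen duality, and invoke the fact from \ref{para:duality_hom_tensor} that this duality carries hom functors to tensor functors, which identifies the transported Yoneda embedding with $\mathrm{Tensor}$ and the transported defect with $\codefect$. Your additional direct check of the unit via $\codefect(N\otimes -)\cong (N\otimes -)({_R}R)\cong N$ and the appeal to the dual half of \ref{para:serre_quots_via_adjunctions} are consistent with, and merely make explicit, what the paper leaves implicit.
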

\begin{proof}
Apply \ref{theorem:defect_adjunction} to $R^{\op}$, use $\modr R \simeq R^{\op}\modl$, the Auslander-Gruson-Jensen duality $R\modl\modl \simeq (R^{\op}\modl\modl)^{\op}$, and the fact that this duality maps hom functors to tensor functors, see \ref{para:duality_hom_tensor}.
\end{proof}

Analogous to Subsection \ref{subsection:quo_ind_by_defect}, we recognize the Serre quotient of \ref{theorem:codefect_adjunction} as being of the form $\Beh{R}{\M}$ for a particular class of modules.

We recall the definition of flat modules.
\begin{definition}
A module $M \in R\Modl$ is called \textbf{flat} if
\[
(- \otimes M): \Modr R \rightarrow \Ab
\]
is an exact functor. Here, the domain of the tensor functor is given by all $R$-modules.
\end{definition}

\begin{remark}\label{remark:flat}
The functor $(- \otimes M)$ is always right exact. Thus, $M$ is flat if and only if $(- \otimes M)$ preserves monomorphisms. Moreover, it suffices to test this only for inclusions of finitely generated right ideals $I$ into $R$ \cite[Proposition 10.6]{Stenstroem}.

From this, we obtain the following useful characterization: if $R$ is right coherent, then $M \in R\Modl$ is flat if and only if for every monomorphism $N'' \rightarrow N' \in \modr R$ of finitely presented right modules, the map
\[
N'' \otimes M \rightarrow N' \otimes M
\]
is injective.
\end{remark}

\begin{definition}\label{definition:fp_faithfully_flat}
A module $M \in R\Modl$ is called \textbf{fp-faithfully flat} if it is flat and if 
\[
(- \otimes M): \modr R \rightarrow \Ab
\]
is a faithful functor. Here, the domain of the tensor functor is given only by finitely presented modules.
\end{definition}

\begin{lemma}\label{lemma:test_fpfaithfullyflat}
Let $R$ be a right coherent ring.
A flat module $M \in R\Modl$ is fp-faithfully flat if and only if the following holds:
\[
(N \otimes M) \cong 0 \hspace{1em}\text{if and only if}\hspace{1em} N \cong 0
\]
for all $N \in \modr{R}$.
\end{lemma}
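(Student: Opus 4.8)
The plan is to mirror the proof of the fp-cogenerator version in Lemma \ref{lemma:test_fpcogenerator}, simply replacing the functor $\Hom(-,M)$ by the tensor functor and invoking the same general fact from Appendix \ref{lemma:faithful_implies_zero_test} about faithful exact functors. Everything here concerns the functor
\[
(-\otimes M): \modr R \rightarrow \Ab,
\]
and the crux is to observe that, under the standing hypotheses, this functor is exact, so that its faithfulness and its reflection of zero objects coincide. Since the lemma assumes $M$ flat throughout, being fp-faithfully flat is by definition equivalent to $(-\otimes M)$ being faithful, and I would prove that this faithfulness is in turn equivalent to the stated zero test.

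For the forward implication I would argue directly and without using either hypothesis on $R$ or the flatness of $M$: if $M$ is fp-faithfully flat then $(-\otimes M)$ is faithful, and a faithful additive functor always reflects zero objects (Appendix \ref{lemma:faithful_implies_zero_test}). Hence $N\otimes M\cong 0$ forces $N\cong 0$, while the reverse implication $N\cong 0\Rightarrow N\otimes M\cong 0$ is automatic because additive functors preserve the zero object.

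For the converse I would assume the zero test and deduce faithfulness of $(-\otimes M)$ on $\modr R$. The point where both assumptions enter is in checking exactness: since $R$ is right coherent, $\modr R$ is abelian and the inclusion $\modr R\subseteq\Modr R$ is exact (the dual of \ref{para:coherent_rings}); and since $M$ is flat, $(-\otimes M)\colon\Modr R\rightarrow\Ab$ is exact, so its restriction to $\modr R$ is exact as a composite of exact functors (cf.\ the characterization in \ref{remark:flat}). With exactness in hand, the second part of Appendix \ref{lemma:faithful_implies_zero_test}---that an exact functor between abelian categories which reflects zero objects is faithful---turns the zero test into faithfulness of $(-\otimes M)$, which together with flatness is exactly fp-faithful flatness.

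The argument is essentially formal, and I expect no genuine difficulty. The only step requiring any care, and the single place where both right coherence and flatness are actually used, is the verification that $(-\otimes M)$ is exact on $\modr R$; this is precisely what makes the general Appendix lemma applicable in the converse direction.
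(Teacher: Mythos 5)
Your proposal is correct and follows essentially the same route as the paper: both reduce the statement to Appendix \ref{lemma:faithful_implies_zero_test}, using right coherence to make $\modr R$ abelian and flatness of $M$ to make $(-\otimes M)\colon \modr R \rightarrow \Ab$ exact, so that faithfulness and reflection of zero objects coincide. Your write-up merely spells out in more detail what the paper's two-line proof leaves implicit.
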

\begin{proof}
Follows from Appendix \ref{lemma:faithful_implies_zero_test}.
Note that if $R$ is right coherent, $\modr R$ is abelian (Paragraph \ref{para:coherent_rings}), and if $M$ is flat, $(- \otimes M): \modr R \rightarrow \Ab$ is an exact functor.
\end{proof}

\begin{remark}
There is the common notion of a module $M \in R\Modl$ being \textbf{faithfully flat}, which means that $M$ is flat and that the functor $(- \otimes M)$ on the whole module category $R\Modl$ is faithful.
For example, ${_R}R$ is faithfully flat.
However, we will only need the weaker version given in \ref{definition:fp_faithfully_flat}, a version which the author could not find in the literature.
\end{remark}

\begin{ex}
We give an example of an fp-faithfully flat module which is not faithfully flat.
Let
\[
R := k[x_i \mid i \in \N]
\]
be the commutative polynomial ring over a field $k$ in infinitely many variables.
Set
\[
M := \bigoplus_{i \in \N} R[\frac{1}{x_i}]
\]
where $R[\frac{1}{x_i}]$ denotes the localization of $R$ at the element $x_i$ for $i \in \N$.
Then each $R[\frac{1}{x_i}]$ is a flat $R$-module (since it is a localization of $R$), and $M$ is flat as a filtered colimit of flat modules.
Moreover, $M$ is not faithfully flat: if we define the ideal
\[
\mathfrak{m} := \langle x_i \mid i \in \N \rangle \subseteq R,
\]
then
\[
({R}/{\mathfrak{m}}) \otimes M \cong \bigoplus_{i \in \N} \left( ({R}/{\mathfrak{m}}) \otimes R[\frac{1}{x_i}]\right) \cong 0
\]
since $x_i$ acts on $({R}/{\mathfrak{m}}) \otimes R[\frac{1}{x_i}]$ both as zero and as an automorphism for all $i \in \N$.

But $M$ is fp-faithfully flat. To see this,
let the matrix $A \in R^{m \times n}$ present the module $N \in \modr R$ for $m,n \in \Nzero$, i.e., 
\[
N \cong \cokernel( R^{n \times 1} \xrightarrow{A} R^{m \times 1}).
\]
Let $J \subseteq N$ be the subset of all $j \in \N$ such that $x_j$ occurs as a variable in one of the polynomial entries of $A$. Note that $J$ is finite.
We define the subring
\[
R_J := k[ x_j \mid j \in J] \subseteq R
\]
By definition of $J$, the matrix $A$ also gives rise to a map
\[
R_J^{n \times 1} \xrightarrow{A} R_J^{m \times 1}.
\]
Applying $(- \otimes_{R_J} R)$ to that map gives us back
\[
R^{n \times 1} \xrightarrow{A} R^{m \times 1}
\]
(up to isomorphism). It follows that we have
\[
N \cong \cokernel( (R_J^{n \times 1} \xrightarrow{A} R_J^{m \times 1}) \otimes_{R_J} R ) \cong \cokernel( R_J^{n \times 1} \xrightarrow{A} R_J^{m \times 1}) \otimes_{R_J} R
\]
since the tensor product is right exact.
We set
\[
N' := \cokernel( R_J^{n \times 1} \xrightarrow{A} R_J^{m \times 1})
\]
and note that, since $R$ is a free $R_J$-module,
$N'$ is zero if and only if $N$ is zero.
Last, we compute
\begin{align*}
N \otimes M &\cong \bigoplus_{i \in \N} N \otimes R[\frac{1}{x_i}] \\
&\cong \bigoplus_{i \in \N} \big(N' \otimes_{R_J} R \otimes_R R[\frac{1}{x_i}] \big) \\
&\cong \bigoplus_{i \in \N} \big(N' \otimes_{R_J} R[\frac{1}{x_i}]\big)
\end{align*}
But now, for all $i \not\in J$, $R[\frac{1}{x_i}]$ is a free $R_J$-module. For those $i$, the summand $N' \otimes_{R_J} R[\frac{1}{x_i}]$ is zero if and only if $N'$ is zero. Now, the claim follows from \ref{lemma:test_fpfaithfullyflat}.
\end{ex}

\begin{theorem}\label{theorem:kernel_of_codefect}
Let $R$ be a right coherent ring and let $\M \in R\Modl$.
\begin{enumerate}
\item Then 
\[
\kernel( \codefect ) \subseteq \kernel(\ev{\M})
\]
if and only if $\M$ is flat.
\item Moreover, 
\[
\kernel( \codefect ) = \kernel(\ev{\M})
\]
if and only if $\M$ is fp-faithfully flat.
\end{enumerate}
\end{theorem}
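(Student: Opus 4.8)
The plan is to dualize the proof of \ref{theorem:kernel_of_defect} by systematically replacing presentations with copresentations, hom functors with tensor functors, the contravariant defect with the covariant defect, fp-injectivity with flatness, and the fp-cogenerator property with fp-faithful flatness; the logical skeleton of the argument is otherwise identical, so I expect no genuinely new difficulty.

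For the first statement I would begin from a copresentation of an arbitrary $\mathcal{G} \in R\modl\modl$ given by a morphism $\alpha \colon N \to N'$ in $\modr R$, which exists by \ref{corollary:copresentations}, i.e.\ an exact sequence
\[
0 \to \mathcal{G} \to (N \otimes -) \xrightarrow{(\alpha \otimes -)} (N' \otimes -).
\]
By \ref{example:covariant_defect} we have $\codefect(\mathcal{G}) \cong \kernel(\alpha)$, so $\mathcal{G} \in \kernel(\codefect)$ precisely when $\alpha$ is a monomorphism. Applying the exact evaluation functor $\ev{\M}$ to the copresentation yields an exact sequence $0 \to \mathcal{G}(\M) \to N \otimes \M \to N' \otimes \M$, whence $\mathcal{G}(\M) \cong \kernel(N \otimes \M \to N' \otimes \M)$, which vanishes exactly when $\alpha \otimes \M$ is injective. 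Since conversely every monomorphism $\alpha$ in $\modr R$ arises as the copresentation of the object $\mathcal{G} := \kernel((N \otimes -) \to (N' \otimes -))$ of the abelian category $R\modl\modl$, the inclusion $\kernel(\codefect) \subseteq \kernel(\ev{\M})$ is equivalent to the implication ``$\alpha$ mono $\Rightarrow$ $\alpha \otimes \M$ mono'' for all $\alpha$ in $\modr R$. By the characterization of flatness in \ref{remark:flat}, this is exactly the assertion that $\M$ is flat.

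For the second statement I would assume $\M$ is flat, which by the first part secures $\kernel(\codefect) \subseteq \kernel(\ev{\M})$, and consider the induced commutative triangle of canonical quotient functors
\[
R\modl\modl \to \frac{R\modl\modl}{\kernel(\codefect)} \to \frac{R\modl\modl}{\kernel(\ev{\M})}.
\]
By \ref{theorem:codefect_adjunction} the middle category is equivalent to $\modr R$, with every object isomorphic to some tensor functor $(N \otimes -)$; and by \ref{lemma:desirable_features_Beh_zero} the comparison functor on the right sends $(N \otimes -)$ to zero if and only if $N \otimes \M \cong 0$. Hence $\kernel(\codefect) = \kernel(\ev{\M})$ holds exactly when $N \otimes \M \cong 0$ forces $N \cong 0$ for all $N \in \modr R$, which by \ref{lemma:test_fpfaithfullyflat} is precisely fp-faithful flatness.

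The only points I expect to require a moment of care are verifying that every monomorphism of $\modr R$ genuinely occurs as a copresentation, so that flatness, a condition quantified over all monomorphisms, can be detected through $\kernel(\codefect) \subseteq \kernel(\ev{\M})$, and that $\ev{\M}$ may be applied to a copresentation to compute $\mathcal{G}(\M)$ as the kernel of the tensored map. Both reduce to the exactness of $\ev{\M}$ together with the fact that tensor functors are finitely presented, so they are entirely routine; the result is thus the Auslander--Gruson--Jensen dual of \ref{theorem:kernel_of_defect}.
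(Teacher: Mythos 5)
Your proposal is correct and takes essentially the same route as the paper's own proof, which likewise dualizes \ref{theorem:kernel_of_defect} by working with copresentations, identifying $\kernel(\codefect)$ via \ref{example:covariant_defect} as the functors admitting a copresentation by a monomorphism and invoking \ref{remark:flat} for the first statement, and then using the triangle of quotient functors together with \ref{theorem:codefect_adjunction} and \ref{lemma:desirable_features_Beh_zero} for the second. The only difference is cosmetic: you spell out the routine verifications (that $\ev{\M}$ applied to a copresentation computes $\mathcal{G}(\M)$ as a kernel, and that every monomorphism in $\modr R$ occurs as a copresentation) that the paper leaves implicit.
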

\begin{proof}
We remark that the proof is dual to the proof of \ref{theorem:kernel_of_defect}. For the convenience of the reader, we spell it out: we need work with copresentations instead of presentations, i.e., for $\mathcal{G} \in R\modl\modl$ we choose an $\alpha: N \rightarrow N' \in \modr R$ such that
\[
\mathcal{G} \simeq \kernel( (N \otimes -) \xrightarrow{(\alpha \otimes -)} (N' \otimes -) ).
\]
Then by \ref{example:covariant_defect}, we have
\[
\codefect( \mathcal{G} ) \cong \kernel( \alpha )
\]
which is zero if and only if $\alpha$ is a mono.
Thus, $\kernel( \codefect )$ consists of those $\mathcal{G}$ that admit a copresentation by a mono.
Now, the first statementfollows from our characterization of flat modules in \ref{remark:flat}.
In particular, whenever $M$ is flat, we have a commutative diagram (up to natural isomorphism) of the following form:
\begin{equation}\label{equation:codefect_vs_eval}
       \begin{tikzpicture}[mylabel/.style={fill=white},baseline=($(A) + 0.5*(d)$)]
        \coordinate (r) at (2,0);
        \coordinate (d) at (0,-2);
        \node (A) {$R\modl\modl$};
        \node (B) at ($(A)-(r)+(d)$) {$\frac{R\modl\modl}{\kernel(\codefect)}$};
        \node (C) at ($(B) + 2*(r)$) {$\frac{R\modl\modl}{\kernel(\ev{M})}$};
        \draw[->,thick] (A) to (B);
        \draw[->,thick] (A) to (C);
        \draw[->,thick] (B) to (C);
        \end{tikzpicture}
\end{equation}
By \ref{theorem:codefect_adjunction}, $\frac{R\modl\modl}{\kernel(\codefect)} \simeq (\modr R)$, in particular, every object $\frac{R\modl\modl}{\kernel(\codefect)}$ is isomorphic to $(N \otimes -)$ for some $N \in \modr R$. The functor given by the horizontal arrow in \eqref{equation:codefect_vs_eval} maps $(N \otimes -)$ to zero if and only if $(N \otimes M)$ is zero (by \ref{lemma:desirable_features_Beh_zero}).
Thus, $\kernel( \codefect ) = \kernel(\ev{\M})$ if and only if $M$ is fp-faithfully flat.
\end{proof}

\begin{corollary}\label{corollary:equivalence_beh_modR}
Let $R$ be a right coherent ring, and let $\M \in R\Modl$ be an fp-faithfully flat module. Then
\[
\Beh{R}{\M} \simeq \modr R.
\]
\end{corollary}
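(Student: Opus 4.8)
The plan is to deduce this statement purely formally from the two results established earlier in this subsection, exactly mirroring the way the module-behavior duality \ref{corollary:beh_rmodop_equivalence} was obtained from \ref{theorem:kernel_of_defect} and \ref{theorem:defect_adjunction}. By definition, $\Beh{R}{\M}$ is the Serre quotient $\frac{R\modl\modl}{\kernel(\ev{\M})}$, so the entire task reduces to recognizing this quotient as $\modr R$. All the substance has already been done; what remains is a concatenation of two equivalences.

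First I would invoke \ref{theorem:kernel_of_codefect}: since $R$ is right coherent and $\M$ is fp-faithfully flat, its second part delivers the equality of Serre subcategories
\[
\kernel(\codefect) = \kernel(\ev{\M}).
\]
Because a Serre quotient depends only on the Serre subcategory being divided out — and, via the explicit description in \ref{para:explicit_serre_quotients}, two quotients by the same subcategory share objects, morphisms, and composition — this yields a canonical identification
\[
\Beh{R}{\M} = \frac{R\modl\modl}{\kernel(\ev{\M})} = \frac{R\modl\modl}{\kernel(\codefect)}.
\]

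Next I would feed this identification into \ref{theorem:codefect_adjunction}, whose concluding assertion is precisely that the covariant defect induces an equivalence $\frac{R\modl\modl}{\kernel(\codefect)} \simeq \modr R$. Composing the two steps produces the desired equivalence $\Beh{R}{\M} \simeq \modr R$, completing the argument.

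There is no genuine obstacle at the level of this corollary; the real work resides in the preceding two theorems. The only point requiring minor attention is that the hypotheses line up correctly: right coherence of $R$ is exactly what makes $\modr R$ abelian (Paragraph \ref{para:coherent_rings}) and permits $\codefect$ to corestrict to $\modr R$, while fp-faithful flatness of $\M$ is precisely the condition that upgrades the inclusion $\kernel(\codefect) \subseteq \kernel(\ev{\M})$ of \ref{theorem:kernel_of_codefect}(1) to the equality used above. Once these hypotheses are seen to match, the statement follows immediately.
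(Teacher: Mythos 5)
Your proposal is correct and follows essentially the same route as the paper, whose proof is simply the citation of \ref{theorem:kernel_of_codefect} and \ref{theorem:codefect_adjunction}; you have merely spelled out the two-step composition (equality of Serre subcategories, then the equivalence induced by $\codefect$) that the paper leaves implicit.
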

\begin{proof}
This follows from \ref{theorem:kernel_of_codefect} and \ref{theorem:codefect_adjunction}.
\end{proof}

\begin{remark}
If $R$ is not right coherent, $\modr R$ is not abelian, and hence cannot be equivalent to the abelian category $\Beh{R}{M}$, even when $M$ is fp-faithfully flat. In \cite[Lemma 6.3]{PrestRavi} , it is shown that in this case, $\Beh{R}{M}$ can be characterized as the smallest abelian subcategory (not necessarily full) of $\Modr R$ that contains $\modr R$.
\end{remark}

\subsection{Characterization by a universal property}

In the following lemma, we enlist data and properties which will turn out to characterize $\Beh{R}{\M}$.
\begin{lemma}\label{lemma:data_of_beh}
Let $\M \in R\Modl$.
\begin{enumerate}
    \item The category $\Beh{R}{\M}$ is abelian.
    \item The functor $\Beh{R}{\M} \rightarrow \Ab: \asBeh{\mathcal{G}}{\M} \mapsto \mathcal{G}(\M)$ is faithful and exact.
    \item The additive functor 
    \begin{align*}
    R &\rightarrow \Beh{R}{\M} \\
    \bullet &\mapsto \asBeh{\ff}{\M}
    \end{align*}
    i.e., the functor defined by the following composition of canonical functors
    \[
    R \rightarrow R\modl\modl \rightarrow \Beh{R}{\M}
    \]
    satisfies
    \[
    (R \rightarrow \Beh{R}{\M} \rightarrow \Ab) \simeq (R \xrightarrow{\M} \Ab).
    \]
\end{enumerate}
\end{lemma}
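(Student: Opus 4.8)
The plan is to read off all three statements directly from the general theory of Serre quotients recalled in the previous subsection, together with the exactness of the evaluation functor established in \ref{para:modules_as_functors}; essentially no new construction is needed, and the work lies in invoking the cited results in the correct order. First I would record that $\ev{\M}: R\modl\modl \rightarrow \Ab$ is exact, since exactness in a functor category can be tested componentwise (as noted in \ref{para:modules_as_functors}). Consequently its kernel $\kernel(\ev{\M})$ is a Serre subcategory of the abelian category $R\modl\modl$, because the kernel of any exact functor between abelian categories is a Serre subcategory. Since $R\modl\modl$ is small abelian, the Serre quotient $\Beh{R}{\M} = R\modl\modl/\kernel(\ev{\M})$ exists and is itself abelian; this is exactly part (1).

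For part (2) I would appeal to the explicit description of Serre quotients in \ref{para:explicit_serre_quotients}. Because $\kernel(\ev{\M})$ is precisely the kernel of the exact functor $\ev{\M}$, that description furnishes the canonical factorization of $\ev{\M}$ as the quotient functor $R\modl\modl \rightarrow \Beh{R}{\M}$ followed by a canonical faithful and exact functor $\Beh{R}{\M} \rightarrow \Ab$. By construction, this second functor is exactly the assignment $\asBeh{\mathcal{G}}{\M} \mapsto \mathcal{G}(\M)$, and its faithfulness and exactness are asserted in \ref{para:explicit_serre_quotients}. This yields part (2).

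For part (3), the functor $R \rightarrow \Beh{R}{\M}$ is by definition the composite of the canonical functor $R \rightarrow R\modl\modl$ of \ref{para:R_to_Rmodmod} with the quotient functor, so it sends $\bullet \mapsto \asBeh{\ff}{\M}$. Composing further with the faithful exact functor of part (2) and using the commutativity of the diagram in \ref{remark:morphisms_of_beh}, this composite equals the restriction of $\ev{\M}$ along $R \rightarrow R\modl\modl$. By the discussion in \ref{para:modules_as_functors}, that restriction is naturally isomorphic to the functor $M: R \rightarrow \Ab$ determined by the module $\M$, which is exactly the claimed identity. I do not expect any substantial obstacle here, since each clause is a direct citation; the only point requiring a moment's care is that the natural isomorphism in part (3) be compatible with the designated endomorphisms indexed by $r \in R$, but this is automatic, as the isomorphism of \ref{para:modules_as_functors} is natural precisely with respect to the morphisms of $R$.
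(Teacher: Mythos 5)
Your proof is correct and takes essentially the same route as the paper's: abelianness from the Serre-quotient construction, faithfulness and exactness of $\Beh{R}{\M} \rightarrow \Ab$ from the explicit description of Serre quotients (the factorization recorded in \ref{remark:morphisms_of_beh}), and the identification of the composite $R \rightarrow \Beh{R}{\M} \rightarrow \Ab$ with $\M$ via the universal property of $R\modl\modl$. You merely spell out in more detail the citations that the paper's three-line proof invokes implicitly.
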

\begin{proof}
The category $\Beh{R}{\M}$ is abelian since it is constructed as a Serre quotient.
In \ref{remark:morphisms_of_beh}, we already saw exactness and faithfulness of $\Beh{R}{\M} \rightarrow \Ab$.
Last, the given equation is satisfied by the universal property of $R\modl\modl$ (see \ref{theorem:up_Rmodmod}).
\end{proof}

\begin{theorem}[Universal property of $\Beh{R}{\M}$]\label{theorem:up_of_beh}
Let $\M \in R\Modl$. Suppose given the following data (which we also call a \textbf{context for behaviors over $M$}):
\begin{enumerate}
    \item an abelian category $\AC$,
    \item a faithful and exact functor $\AC \xrightarrow{I} \Ab$,
    \item an additive functor $R \xrightarrow{F} \AC$ such that
    \[
    (R \xrightarrow{F} \AC \xrightarrow{I} \Ab) \simeq (R \xrightarrow{\M} \Ab).
    \]
\end{enumerate}
Then there exists an exact functor $\Beh{R}{\M} \rightarrow \AC$ (unique up to natural isomorphism) such that the following diagram commutes (up to natural isomorphism):
\begin{equation}\label{eq:up_of_beh}
       \begin{tikzpicture}[mylabel/.style={fill=white}, baseline=(R)]
        \coordinate (r) at (3,0);
        \coordinate (d) at (0,-0.75);
        \node (R) {$R$};
        \node (A) at ($(R)+(r)-(d)$) {$\Beh{R}{\M}$};
        \node (B) at ($(R)+(r)+(d)$) {$\AC$};
        \node (Ab) at ($(R)+2*(r)$) {$\Ab$};
        \draw[->,thick] (R) to (A);
        \draw[->,thick] (R) to (B);
        \draw[->,thick] (A) to (Ab);
        \draw[->,thick] (B) to (Ab);
        \draw[->,thick,dashed] (A) to (B);
        \end{tikzpicture}
\end{equation}
In other words, the data in \ref{lemma:data_of_beh} are an initial context for behaviors over $M$.
\end{theorem}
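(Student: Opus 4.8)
The plan is to produce the functor in two stages: first lift the given data to an exact functor out of the free abelian category $R\modl\modl$, and then descend it along the Serre quotient defining $\Beh{R}{\M}$. First I would invoke the universal property of $R\modl\modl$ (Theorem~\ref{theorem:up_Rmodmod}) for the additive functor $F: R \to \AC$. This yields an exact functor $\ev{F}: R\modl\modl \to \AC$, unique up to natural isomorphism, whose restriction along the canonical functor $R \to R\modl\modl$ is $F$. The whole problem then reduces to showing that $\ev{F}$ descends along the canonical quotient $R\modl\modl \to \Beh{R}{\M}$, i.e. that $\kernel(\ev{\M}) \subseteq \kernel(\ev{F})$. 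Once this inclusion is established, the universal property of Serre quotients (\ref{para:up_of_serre_quotients}) produces the desired exact functor $\Phi: \Beh{R}{\M} \to \AC$ together with a natural isomorphism between $\ev{F}$ and the composite of $\Phi$ with the quotient.

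The key step, which I expect to be the main (though not deep) obstacle, is the kernel inclusion, and the crux is to identify $I \circ \ev{F}$ correctly. The composite $I \circ \ev{F}: R\modl\modl \to \Ab$ is exact, and its restriction to $R$ is $I \circ F \simeq \M$ by the third hypothesis. But $\ev{\M}$ is by construction the unique exact functor with this restriction, so the uniqueness clause of Theorem~\ref{theorem:up_Rmodmod} forces $I \circ \ev{F} \simeq \ev{\M}$. Consequently, for any $\mathcal{G} \in R\modl\modl$ with $\mathcal{G}(\M) = \ev{\M}(\mathcal{G}) \cong 0$, we obtain $I(\ev{F}(\mathcal{G})) \cong 0$; since $I$ is faithful and exact it reflects the zero object (Appendix~\ref{lemma:faithful_implies_zero_test}), whence $\ev{F}(\mathcal{G}) \cong 0$. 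This gives $\kernel(\ev{\M}) \subseteq \kernel(\ev{F})$, and hence the factorization $\Phi$.

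It remains to verify that $\Phi$ makes both triangles of \eqref{eq:up_of_beh} commute. For the triangle involving $\AC$, precomposing $\Phi$ with the quotient recovers $\ev{F}$, whose restriction to $R$ is $F$; as the quotient sends the canonical $R \to R\modl\modl$ to the canonical $R \to \Beh{R}{\M}$, this yields $\Phi \circ (R \to \Beh{R}{\M}) \simeq F$. For the triangle involving $\Ab$, both $I \circ \Phi$ and the canonical faithful exact functor $\Beh{R}{\M} \to \Ab$ of Lemma~\ref{lemma:data_of_beh} restrict to $\ev{\M}$ after precomposition with the quotient, the former using the isomorphism $I \circ \ev{F} \simeq \ev{\M}$ from the previous paragraph. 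They therefore agree by the epimorphism property of the canonical quotient functor (\ref{remark:can_quotient_is_epi}).

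Finally, for uniqueness I would argue that any exact functor $\Phi': \Beh{R}{\M} \to \AC$ satisfying \eqref{eq:up_of_beh} has the property that $\Phi'$ precomposed with the quotient is an exact functor $R\modl\modl \to \AC$ whose restriction to $R$ is $F$; by the uniqueness in Theorem~\ref{theorem:up_Rmodmod} this composite is naturally isomorphic to $\ev{F}$, and then the uniqueness of the Serre quotient factorization of $\ev{F}$ (\ref{para:up_of_serre_quotients}) forces $\Phi' \simeq \Phi$. This completes the plan; the only nontrivial ingredient is the double appeal to the uniqueness in the universal property of $R\modl\modl$, once to pin down $I \circ \ev{F}$ and once to pin down $\Phi$.
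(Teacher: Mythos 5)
Your proposal is correct and follows essentially the same route as the paper's proof: obtain $\ev{F}$ from the universal property of $R\modl\modl$, identify $I \circ \ev{F} \simeq \ev{\M}$ by cancelling the canonical functor $R \to R\modl\modl$ via the uniqueness clause of Theorem~\ref{theorem:up_Rmodmod}, use faithfulness of $I$ to get $\kernel(\ev{\M}) \subseteq \kernel(\ev{F})$, descend along the Serre quotient, and verify the right triangle by cancelling the quotient functor using \ref{remark:can_quotient_is_epi}. The only difference is that you spell out the uniqueness of the induced functor explicitly, which the paper leaves implicit.
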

\begin{proof}
We make use of the following diagram, which also serves the purpose of temporarily giving names to all functors involved in the argument:
\begin{center}
       \begin{tikzpicture}[mylabel/.style={fill=white}]
        \coordinate (r) at (3.5,0);
        \coordinate (d) at (0,-1);
        \node (Ronly) {$R$};
        \node (R) at ($(Ronly)+(r)$){$R\modl\modl$};
        \node (A) at ($(R)+(r)-(d)$) {$\Beh{R}{\M}$};
        \node (B) at ($(R)+(r)+(d)$) {$\AC$};
        \node (Ab) at ($(R)+2*(r)$) {$\Ab$};
        \draw[->,thick] (R) to node[mylabel]{$Q$}(A);
        \draw[->,thick] (R) to node[mylabel]{$\ev{F}$}(B);
        \draw[->,thick] (A) to node[mylabel]{$S$}(Ab);
        \draw[->,thick] (B) to node[mylabel]{$I$}(Ab);
        \draw[->,thick] (Ronly) to node[above]{$J$}(R);
        \draw[->,thick,out=-25,in=-180] (Ronly) to node[mylabel]{$F$}(B);
        \draw[->,thick,out=45,in=180-45] (R) to node[mylabel]{$\ev{\M}$}(Ab);
        \draw[->,thick,out=-45,in=-180+45] (Ronly) to node[above]{$\M$}(Ab);
        \draw[->,thick,dashed] (A) to node[mylabel]{$G$}(B);
        \end{tikzpicture}
\end{center}
First, we compute that
\begin{align*}
\ev{\M} \circ J &\simeq M \\
&\simeq I \circ F \simeq I \circ \ev{F} \circ J
\end{align*}
from which we cancel $J$ on the right (by the universal property of $R\modl\modl$) and deduce
\[
\ev{\M} \simeq I \circ \ev{F}.
\]
Since $I$ is faithful, it follows that
\[
\kernel( \ev{M} ) \subseteq \kernel( \ev{F} ) 
\]
which gives us the functor $G$ in our diagram above (by the universal property of Serre quotients) which satisfies
\[
G \circ Q \simeq \ev{F}.
\]
Composition with $J$ yields
\[
G \circ Q \circ J \simeq \ev{F} \circ J \simeq F.
\]
This is the commutativity of the left triangle in \ref{eq:up_of_beh}.
For proving the commutativity of the right triangle in \ref{eq:up_of_beh}, we calculate
\begin{align*}
S \circ Q &\simeq \ev{\M} \\
&\simeq I \circ \ev{F} \simeq I \circ G \circ Q
\end{align*}
and cancel $Q$ from the right (by \ref{remark:can_quotient_is_epi}) to obtain
\[
S \simeq I \circ G
\]
and thus the claim.
\end{proof}

\begin{remark}
The induced functor in \ref{theorem:up_of_beh} is always faithful.
\end{remark}

\begin{ex}\label{example:comparision_with_oberst}
Let $M \in R\Modl$ and let $\End(M)$ be the ring of $R$-module endomorphisms of $M$. 
We remark that modules over $\End(M)$ also provide a context for behaviors over $M$:
\begin{enumerate}
 \item $\End(M)\Modl$ is an abelian category,
    \item the functor $\End(M)\Modl \rightarrow \Ab$ which forgets the module structure is faithful and exact, 
    \item we have an additive functor
        \begin{align*}
        R &\rightarrow \End(M)\Modl \\
        (\bullet \xrightarrow{r} \bullet) &\mapsto (M \xrightarrow{x \mapsto rx} M)
        \end{align*}
        such that
    \[
    (R \rightarrow \End(M)\Modl \rightarrow \Ab) \simeq (R \xrightarrow{\M} \Ab).
    \]   
\end{enumerate}
It follows from \ref{theorem:up_of_beh} that we have a faithful and exact functor
\[
\Beh{R}{M} \rightarrow \End(M)\Modl
\]
such that the diagram
\begin{center}
       \begin{tikzpicture}[mylabel/.style={fill=white}, baseline=(R)]
        \coordinate (r) at (3,0);
        \coordinate (d) at (0,-0.75);
        \node (R) {$R$};
        \node (A) at ($(R)+(r)-(d)$) {$\Beh{R}{\M}$};
        \node (B) at ($(R)+(r)+(d)$) {$\End(M)\Modl$};
        \node (Ab) at ($(R)+2*(r)$) {$\Ab$};
        \draw[->,thick] (R) to (A);
        \draw[->,thick] (R) to (B);
        \draw[->,thick] (A) to (Ab);
        \draw[->,thick] (B) to (Ab);
        \draw[->,thick] (A) to (B);        
        \end{tikzpicture}
\end{center}    
commutes (up to natural isomorphism).
This functor does not need to be full in general.
For example, let $R := \Z_{\langle p \rangle}$ be the ring of integers localized at the maximal ideal spanned by a prime $p \in \Z$.
Let $M := \Z[\frac{1}{p}]/\Z$ be the Prüfer group regarded as an $R$-module. Note that $M$ is an injective cogenerator and consequently $\Beh{R}{M} \simeq (R\modl)^{\op}$ by \ref{corollary:beh_rmodop_equivalence}.
In particular, $\End_{\Beh{R}{M}}( \asBeh{\ff}{M} ) \cong R$. More concretely, every endomorphism of $\asBeh{\ff}{M}$ is given by multiplication with an $r \in R$.

On the other hand, $\End(M)$ is given by the $p$-adic integers $\Z_p$.
Moreover, we have an injective map
\begin{align*}
\Z_p &\rightarrow \End_{\End(M)}( M )\\
r &\mapsto (x \mapsto rx).
\end{align*}
It follows that the image of the map
\[
\End_{\Beh{R}{M}}( \asBeh{\ff}{M} ) \rightarrow \End_{\End(M)}( M )
\]
which is induced by the functor $\Beh{R}{M} \rightarrow \End(M)\Modl$, only consists of those endomorphisms that are induced by multiplication with an $r \in R \subsetneq \Z_p$ and hence is not surjective.
\end{ex}

\subsection{Change of rings}

We investigate the change of rings as a tool to compare ambient categories for behaviors over modules which are defined over different rings.

\begin{construction}\label{construction:functor_change_of_rings}
Let $R \xrightarrow{\ringmap} S$ be a morphism of rings.
Let $M \in S\Modl$.
We denote the restriction of scalars by $M_{|\ringmap} \in R\Modl$, i.e., $M_{|\ringmap}$ is the module whose underlying abelian group is given by $M$, and $r \in R$ acts on $M$ via $\ringmap(r) \in S$.
In our functorial language (see \ref{para:modules_as_functors}), $M_{|\ringmap}$ is defined by the functor
\[
R \xrightarrow{\ringmap} S \xrightarrow{M} \Ab.
\]
In this situation, we have a commutative square of functors (up to natural isomorphism):
\begin{center}
    \begin{tikzpicture}[mylabel/.style={fill=white}, baseline=(R)]
    \coordinate (r) at (4,0);
    \coordinate (d) at (0,-1);
    \node (R) {$R$};
    \node (S) at ($(R)+0.5*(r)+0.5*(d)$) {$S$};
    \node (A) at ($(R)+(r)-(d)$) {$\Beh{R}{M_{|\ringmap}}$};
    \node (B) at ($(R)+(r)+(d)$) {$\Beh{S}{M}$};
    \node (Ab) at ($(R)+2*(r)$) {$\Ab$};
    \draw[->,thick] (R) to node[mylabel]{$\ringmap$} (S);
    \draw[->,thick] (R) to (A);
    \draw[->,thick] (S) to (B);
    \draw[->,thick] (A) to (Ab);
    \draw[->,thick] (B) to (Ab);
    \end{tikzpicture}
\end{center}
Thus, by the universal property of $\Beh{R}{M_{|\ringmap}}$ (see \ref{theorem:up_of_beh}), we get an exact and faithful functor $\ringmap_{\ast}$ which renders the following diagram commutative:
\begin{center}
    \begin{tikzpicture}[mylabel/.style={fill=white}, baseline=(R)]
    \coordinate (r) at (4,0);
    \coordinate (d) at (0,-1);
    \node (R) {$R$};
    \node (S) at ($(R)+0.5*(r)+0.5*(d)$) {$S$};
    \node (A) at ($(R)+(r)-(d)$) {$\Beh{R}{M_{|\ringmap}}$};
    \node (B) at ($(R)+(r)+(d)$) {$\Beh{S}{M}$};
    \node (Ab) at ($(R)+2*(r)$) {$\Ab$};
    \draw[->,thick] (R) to node[mylabel]{$\ringmap$} (S);
    \draw[->,thick] (R) to (A);
    \draw[->,thick] (S) to (B);
    \draw[->,thick] (A) to (Ab);
    \draw[->,thick] (B) to (Ab);
    \draw[->,thick,dashed] (A) to node[right]{$\ringmap_{\ast}$}(B);
    \end{tikzpicture}
\end{center}
\end{construction}

We will need the next lemma in our proof of \ref{theorem:epi_induces_equivalence}.

\begin{lemma}\label{lemma:prest_epis}
Let $\ringmap: R \rightarrow S$ be a morphism of rings.
We denote by $F$ the following composition of functors:
\[
R \xrightarrow{\ringmap} S \rightarrow S\modl\modl.
\]
Then $\ringmap$ is an epimorphism (in the category of rings) if and only if the functor 
\[\ev{F}: R\modl\modl \rightarrow S\modl\modl\]
(induced by the universal property of $R\modl\modl$, see \ref{theorem:up_Rmodmod}) induces an equivalence of categories
\[
\frac{R\modl\modl}{\kernel(\ev{F})} \simeq S\modl\modl.
\]
\end{lemma}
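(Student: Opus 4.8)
The plan is to factor $\ev{F}$ through its Serre quotient and to recognise, via the universal property of free abelian categories, exactly when the resulting comparison functor is an equivalence; the entire argument is then steered by a single ring-theoretic fact about endomorphisms of the generator. By \ref{para:explicit_serre_quotients} the functor $\ev{F}$ factors canonically as
\[
R\modl\modl \xrightarrow{\;Q\;} \frac{R\modl\modl}{\kernel(\ev{F})} \xrightarrow{\;\overline{\ev{F}}\;} S\modl\modl ,
\]
where $Q$ is the quotient functor and $\overline{\ev{F}}$ is faithful and exact. Hence $\overline{\ev{F}}$ is an equivalence if and only if it is full and essentially surjective. Composing the canonical functor $R \to R\modl\modl$ with $Q$ produces an object $Q(\ff)$ together with a ring homomorphism $\rho\colon R \to \End(Q(\ff))$, and faithfulness of $\overline{\ev{F}}$ yields an injective ring homomorphism $\End(Q(\ff)) \hookrightarrow \End_{S\modl\modl}(\ff) \cong S$ (using the analogue of \ref{example:end_of_ff} for $S$) whose composite with $\rho$ is $\ringmap$. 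Everything will hinge on the image of this injection, i.e. on which elements of $S$ are realised as endomorphisms of $Q(\ff)$.

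For the direction ``$\ringmap$ epimorphism $\Rightarrow$ equivalence'' I would first establish that the embedding $\End(Q(\ff)) \hookrightarrow S$ above is an isomorphism. Granting this, the object $Q(\ff)$ acquires an $S$-action extending $\rho$, that is, an additive functor $S \to \frac{R\modl\modl}{\kernel(\ev{F})}$. The universal property of $S\modl\modl$ as the free abelian category on $S$ (\ref{theorem:up_Rmodmod} applied to $S$) then extends this to an exact functor $H\colon S\modl\modl \to \frac{R\modl\modl}{\kernel(\ev{F})}$. That $H$ and $\overline{\ev{F}}$ are mutually inverse is formal: $\overline{\ev{F}}\circ H$ restricts on $S$ to the identity $S$-action on $\ff$ (because $\overline{\ev{F}}$ induces the given isomorphism on endomorphisms of the generator), so $\overline{\ev{F}}\circ H \simeq \id$ by uniqueness in the universal property of $S\modl\modl$; and $H\circ\overline{\ev{F}}\circ Q = H\circ\ev{F} \simeq Q$ by the universal property of $R\modl\modl$ (both restrict on $R$ to $Q(\ff)$ with action $\rho$), whence $H\circ\overline{\ev{F}}\simeq\id$ after cancelling the epimorphic quotient functor $Q$ via \ref{remark:can_quotient_is_epi}.

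For the reverse direction ``equivalence $\Rightarrow$ $\ringmap$ epimorphism'' I would use that an equivalence is in particular full, so every $s \in S = \End(\ff)$ lifts to an endomorphism of $Q(\ff)$, i.e. is realised by a roof of $R$-morphisms. Thus both implications rest on the same identification: by the explicit description of Serre quotients in \ref{para:explicit_serre_quotients}, a morphism $Q(\ff) \to Q(\ff)$ is of the form $\ev{F}(\alpha)\circ\ev{F}(\beta)^{-1}$ for a roof $\ff \xleftarrow{\beta} \mathcal{E} \xrightarrow{\alpha} \ff$ with $\ev{F}(\beta)$ invertible, so that
\[
\End(Q(\ff)) \cong \{\, s \in S \mid s \text{ is realised by such a roof of } R\text{-morphisms} \,\} \subseteq S .
\]
Unwinding such a roof with the matrix calculus already used in the paper for the (co)defect rewrites $s$ as a finite ``zig-zag'' built from entries of $\ringmap(R)$; here one uses that $\ev{F}$ sends a representable $\Hom(N,-)$ to $\Hom(S\otimes_R N,-)$ and a morphism given by a matrix $A$ over $R$ to the matrix $\ringmap(A)$, so that all entries involved lie in $\ringmap(R)$.

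I expect the main obstacle to be precisely this roof-to-zig-zag translation, together with its identification with the classical theory of dominions: $\End(Q(\ff)) \cong S$ holds if and only if every element of $S$ lies in the dominion of $\ringmap(R)$, which by the standard characterisation of ring epimorphisms (equivalently, restriction of scalars $S\Modl \to R\Modl$ is fully faithful; this is the Silver--Isbell--Mazet criterion) is equivalent to $\ringmap$ being an epimorphism of rings. Feeding this equivalence into the two directions above then completes the proof: in the forward direction it supplies the isomorphism $\End(Q(\ff)) \cong S$ needed to build $H$, and in the reverse direction fullness of $\overline{\ev{F}}$ forces the dominion to be all of $S$. The careful bookkeeping of the matrix zig-zags, and checking that the roofs in $\frac{R\modl\modl}{\kernel(\ev{F})}$ capture exactly the dominion and nothing more, is where the real work lies.
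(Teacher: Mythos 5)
Your proposal is correct in outline, and it takes a genuinely different route from the paper: the paper gives no argument of its own but simply cites \cite[Theorem 12.8.7]{PrestPSL}, whereas you sketch a self-contained proof. Your formal reductions are sound: factoring $\ev{F}$ as the quotient functor $Q$ followed by a faithful exact $\overline{\ev{F}}$, building the candidate inverse $H$ from the universal property of $S\modl\modl$ (Theorem \ref{theorem:up_Rmodmod} applied to $S$) once $\End(Q(\ff)) \rightarrow S$ is known to be surjective, and verifying $\overline{\ev{F}} \circ H \simeq \id$ and $H \circ \overline{\ev{F}} \simeq \id$ by uniqueness in the universal properties together with cancellation of $Q$ via \ref{remark:can_quotient_is_epi}; this cleverly avoids having to prove essential surjectivity of $\ev{F}$ by hand. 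The crux is exactly where you place it, namely relating $\End(Q(\ff))$, i.e.\ the set of $s \in S$ realised by roofs (\ref{para:explicit_serre_quotients}), to the dominion of $\ringmap(R)$; you leave this unexecuted, but it does go through, and in fact only two one-sided statements are needed rather than your full biconditional. Roof $\Rightarrow$ dominion: since $\ev{F}(\mathcal{G})(L) \cong \mathcal{G}(L_{|\ringmap})$ naturally in $\mathcal{G}$ and $L$, a roof computes $s \cdot (-)$ on any $S$-module purely from its $R$-restriction; hence for ring maps $f,g \colon S \rightarrow T$ with $f \ringmap = g \ringmap$, the two $S$-structures on $T$ restrict to the same $R$-module, so $f(s)\cdot(-) = g(s)\cdot(-)$, and evaluating at $1 \in T$ gives $f(s) = g(s)$. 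Epi $\Rightarrow$ roof: by the classical zigzag criterion for ring epimorphisms (Mazet; see \cite{Stenstroem}, Ch.~XI), write $s = XAY$ with $X \in S^{1 \times n}$, $Y \in S^{m \times 1}$, and $A$, $XA$, $AY$ all with entries in $\ringmap(R)$; then the pp formula $\exists w \, \bigl( y = (XA)w \wedge Aw = (AY)x \bigr)$, lifted along $\ringmap$ to a pp formula over $R$, defines the graph of $s$ in every restricted $S$-module (if $Aw = (AY)x$ then $(XA)w = X(AY)x = sx$, and $w := Yx$ witnesses existence), so the associated subfunctor of $\ff^{\oplus 2}$ (Lemma \ref{lemma:malgrange_beh}) with its two projections is the required roof. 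So your plan is viable: what it buys over the paper's citation is an explicit, matrix-level proof in the spirit of the rest of the paper, at the cost of importing the Silver--Isbell--Mazet/zigzag theory of ring epimorphisms as a black box.
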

\begin{proof}
See \cite[Theorem 12.8.7]{PrestPSL}.
\end{proof}

\begin{theorem}\label{theorem:epi_induces_equivalence}
Let $\ringmap: R \rightarrow S$ be an epimorphism of rings, and let $\M \in S\Modl$.
Then
\[
\ringmap_{\ast}: \Beh{R}{\M_{|\ringmap}} \xrightarrow{\sim} \Beh{S}{M}
\]
is an equivalence of categories.
\end{theorem}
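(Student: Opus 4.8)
The plan is to realize both $\Beh{R}{\M_{|\ringmap}}$ and $\Beh{S}{\M}$ as Serre quotients of $R\modl\modl$ and to compare them using Prest's characterization of ring epimorphisms in \ref{lemma:prest_epis}. First I would introduce the functor $F\colon R \xrightarrow{\ringmap} S \rightarrow S\modl\modl$ of \ref{lemma:prest_epis} and the induced exact functor $\ev{F}\colon R\modl\modl \rightarrow S\modl\modl$ coming from the universal property of $R\modl\modl$ (\ref{theorem:up_Rmodmod}). The crucial preliminary identity is
\[
\ev{\M_{|\ringmap}} \simeq \ev{\M} \circ \ev{F}.
\]
Both sides are exact functors $R\modl\modl \rightarrow \Ab$, so by the uniqueness clause of \ref{theorem:up_Rmodmod} it suffices to check that they restrict to the same functor along $R \rightarrow R\modl\modl$; but both restrictions equal $R \xrightarrow{\ringmap} S \xrightarrow{\M} \Ab$, which is exactly $\M_{|\ringmap}$ by definition. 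Using \ref{lemma:desirable_features_Beh_zero}, I then read off from this identity that a functor $\mathcal{G}$ lies in $\kernel(\ev{\M_{|\ringmap}})$ if and only if $\ev{F}(\mathcal{G})$ lies in $\kernel(\ev{\M})$; in particular $\kernel(\ev{F}) \subseteq \kernel(\ev{\M_{|\ringmap}})$.

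Next I would invoke the hypothesis that $\ringmap$ is a ring epimorphism. By \ref{lemma:prest_epis} the functor $\ev{F}$ induces an equivalence
\[
\bar{E}\colon \frac{R\modl\modl}{\kernel(\ev{F})} \xrightarrow{\sim} S\modl\modl,
\]
and in particular $\ev{F}$ is essentially surjective. Since $\kernel(\ev{F}) \subseteq \kernel(\ev{\M_{|\ringmap}})$, the standard description of a Serre quotient taken in two stages (Gabriel \cite{Gab_thesis}) gives
\[
\Beh{R}{\M_{|\ringmap}} = \frac{R\modl\modl}{\kernel(\ev{\M_{|\ringmap}})} \simeq \frac{R\modl\modl/\kernel(\ev{F})}{\,\kernel(\ev{\M_{|\ringmap}})/\kernel(\ev{F})\,}.
\]
Transporting the inner Serre subcategory along $\bar{E}$, its essential image consists of those $\mathcal{H} \in S\modl\modl$ with $\mathcal{H} \cong \ev{F}(\mathcal{G})$ for some $\mathcal{G} \in \kernel(\ev{\M_{|\ringmap}})$; by the kernel criterion above together with essential surjectivity of $\ev{F}$, this is precisely $\kernel(\ev{\M})$. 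Hence $\bar{E}$ descends to an equivalence
\[
\Psi\colon \Beh{R}{\M_{|\ringmap}} \xrightarrow{\sim} \frac{S\modl\modl}{\kernel(\ev{\M})} = \Beh{S}{\M},
\]
which on objects sends $\asBeh{\mathcal{G}}{\M_{|\ringmap}}$ to $\asBeh{\ev{F}(\mathcal{G})}{\M}$.

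It remains to identify $\Psi$ with the comparison functor $\ringmap_{\ast}$ of \ref{construction:functor_change_of_rings}. Writing $Q_{\M_{|\ringmap}}$ and $Q_{\M}$ for the two canonical quotient functors, the previous paragraph shows $\Psi \circ Q_{\M_{|\ringmap}} \simeq Q_{\M} \circ \ev{F}$. On the other hand, by the construction in \ref{theorem:up_of_beh}, $\ringmap_{\ast} \circ Q_{\M_{|\ringmap}}$ is the exact functor induced by the context functor $R \xrightarrow{\ringmap} S \rightarrow \Beh{S}{\M}$. Both $\ringmap_{\ast} \circ Q_{\M_{|\ringmap}}$ and $Q_{\M} \circ \ev{F}$ are exact functors $R\modl\modl \rightarrow \Beh{S}{\M}$ whose restriction along $R \rightarrow R\modl\modl$ equals $R \xrightarrow{\ringmap} S \rightarrow \Beh{S}{\M}$, so they agree by \ref{theorem:up_Rmodmod}; cancelling the quotient epimorphism $Q_{\M_{|\ringmap}}$ (\ref{remark:can_quotient_is_epi}) yields $\ringmap_{\ast} \simeq \Psi$, and therefore $\ringmap_{\ast}$ is an equivalence. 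The main obstacle I anticipate is the bookkeeping in this last identification step, i.e., verifying that the two routes from $R\modl\modl$ into $\Beh{S}{\M}$ are genuinely the same functor up to natural isomorphism; a secondary technical point is a clean invocation of the quotient-in-stages lemma, which is unproblematic here since $R\modl\modl$ is small and hence all the Serre quotients in sight exist.
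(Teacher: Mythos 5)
Your proof is correct, and it shares its essential inputs with the paper's argument: both hinge on Prest's lemma \ref{lemma:prest_epis} and on the identity $\ev{\M_{|\ringmap}} \simeq \ev{\M}\circ\ev{F}$ obtained from the uniqueness clause of \ref{theorem:up_Rmodmod}, which yields $\kernel(\ev{F})\subseteq\kernel(\ev{\M_{|\ringmap}})$. Where you diverge is in the endgame. The paper never computes the image of $\kernel(\ev{\M_{|\ringmap}})$ inside $S\modl\modl$; it observes instead that the composite $S\modl\modl \xrightarrow{\simeq} \frac{R\modl\modl}{\kernel(\ev{F})} \rightarrow \Beh{R}{\M_{|\ringmap}}$ is itself a Serre quotient functor of $S\modl\modl$, that $\ringmap_{\ast}$ links this quotient functor to the quotient functor $S\modl\modl \rightarrow \Beh{S}{\M}$, and that a \emph{faithful} exact functor linking two Serre quotient functors forces their kernels to coincide --- so $\ringmap_{\ast}$ must be an equivalence, with faithfulness (automatic from \ref{theorem:up_of_beh}) doing the decisive work. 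You instead invoke Gabriel's quotient-in-two-stages theorem, compute the transported Serre subcategory exactly (your ``iff'' form of the kernel criterion together with essential surjectivity of $\ev{F}$ is precisely what makes this computation close), and then must separately identify the resulting equivalence $\Psi$ with $\ringmap_{\ast}$ via the universal property of $R\modl\modl$ and cancellation along the quotient functor (\ref{remark:can_quotient_is_epi}). Your route is more explicit and makes no use of faithfulness of $\ringmap_{\ast}$; the paper's route avoids both the two-stage theorem and the exact determination of the transported kernel, at the price of being terse about why the displayed composite is a quotient functor --- which is, implicitly, the same two-stage fact you cite. The identification steps in the two proofs (yours of $\Psi$ with $\ringmap_{\ast}$, the paper's commutativity of the square involving $\ringmap_{\ast}$) come down to the same universal-property argument, so both are sound; yours simply trades the paper's slick faithfulness shortcut for a self-contained kernel computation.
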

\begin{proof}
We consider the following diagram:
\begin{center}
   \begin{tikzpicture}[mylabel/.style={fill=white}]
    \coordinate (r) at (3.5,0);
    \coordinate (d) at (0,-3);
    \node (R) {$R$};
    \node (S) at ($(R)+2*(r)$) {$S$};
    \node (AbR) at ($(R)+(d)$) {$R\modl\modl$};
    \node (Z) at ($(AbR)+(r)$) {$\frac{R\modl\modl}{\kernel({\ev{F}})}$};
    \node (AbS) at ($(Z)+(r)$) {$S\modl\modl$};
    \node (BehR) at ($(AbR) + (d) + 0.5*(r)$) {$\Beh{R}{\M_{|\ringmap}}$};
    \node (BehS) at ($(AbS) + (d) - 0.5*(r)$) {$\Beh{S}{\M}$};
    \node (Ab) at ($(Z) + 2*(d)$) {$\Ab$};

    \draw[->,thick] (R) to node[above]{$\ringmap$}(S);
    \draw[->,thick] (AbR) to (Z);
    \draw[->,thick] (Z) to node[above]{$\simeq$}(AbS);
    \draw[->,thick,out=45,in=180-45] (AbR) to node[mylabel]{$\ev{F}$} (AbS);
    \draw[->,thick] (BehR) to node[above]{$\ringmap_{\ast}$}(BehS);
    \draw[->,thick] (BehR) to (Ab);
    \draw[->,thick] (BehS) to (Ab);
    
    \draw[->,thick] (R) to (AbR);
    \draw[->,thick] (S) to (AbS);
    \draw[->,thick] (AbR) to (BehR);
    \draw[->,thick] (AbS) to (BehS);
    
    \draw[->,thick,out=-180+45,in=180] (AbR) to node[mylabel]{$\ev{\M_{|\ringmap}}$}(Ab);
    \draw[->,thick,out=-45,in=0] (AbS) to node[mylabel]{$\ev{\M}$} (Ab);

    \draw[->,thick,dashed] (Z) to (BehR);
    
    \end{tikzpicture}
\end{center}
How to read this diagram:
$F$ denotes the functor given by the composition
\[
R \xrightarrow{\ringmap} S \rightarrow S\modl\modl,
\]
and $\ev{F}$ is the functor induced by the universal property of $R\modl\modl$.
The arrow marked as an equivalence is due to \ref{lemma:prest_epis}.
The unlabeled solid arrows are either canonical quotient functors or the canonical functors from ambient categories for behaviors to $\Ab$.
All inner shapes consisting of solid arrows are easily seen to be commutative (up to natural isomorphism). It follows that
\[
\ev{\M} \circ \ev{F} \simeq \ev{\M_{|\ringmap}}
\]
which implies
\[
\kernel( \ev{F} ) \subseteq \kernel( \ev{\M_{|\ringmap}} )
\]
which in turn implies the existence of the dashed arrow that renders the whole diagram commutative (up to natural isomorphism).
It follows that we can identify the composite
\[
S\modl\modl \xrightarrow{\simeq} \frac{R\modl\modl}{\kernel({\ev{F}})} \dashrightarrow \Beh{R}{\M_{|\ringmap}}
\]
with a canonical quotient functor of $S\modl\modl$, and since this canonical quotient functor is linked via a faithful functor $\ringmap_{\ast}$ to the canonical quotient functor
\[
S\modl\modl \rightarrow \Beh{S}{\M}
\]
the functor $\ringmap_{\ast}$ needs to be an equivalence.
\end{proof}

\begin{corollary}
Let $\ringmap: R \rightarrow R/I$ be the canonical ring epimorphism induced by a two-sided ideal $I \subseteq R$.
If $M \in R/I\Modl$, then 
\[
\ringmap_{\ast}: \Beh{R}{M_{|\ringmap}} \xrightarrow{\sim} \Beh{R/I}{M}
\]
is an equivalence of categories.
\end{corollary}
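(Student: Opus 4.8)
The plan is to deduce this as an immediate instance of Theorem~\ref{theorem:epi_induces_equivalence} applied with $S := R/I$. Inspecting that theorem, its hypotheses are that $\ringmap$ be an epimorphism in the category of rings and that the signal module live over the target ring. The latter holds by assumption, since $M \in R/I\Modl$, and the functor $\ringmap_{\ast}$ together with the restriction of scalars $M_{|\ringmap}$ are exactly the data supplied by Construction~\ref{construction:functor_change_of_rings}. Hence the entire content of the corollary reduces to verifying the single remaining hypothesis: that the canonical projection $\ringmap: R \rightarrow R/I$ is a ring epimorphism.

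First I would observe that $\ringmap$ is surjective by construction. Then I would invoke the elementary fact that every surjective morphism of rings is an epimorphism in the category of rings: if $f, g: R/I \rightarrow T$ are two ring homomorphisms with $f \circ \ringmap = g \circ \ringmap$, then, writing an arbitrary element of $R/I$ in the form $\ringmap(r)$ for some $r \in R$, we obtain $f(\ringmap(r)) = g(\ringmap(r))$, whence $f = g$. This is the only point genuinely specific to the corollary.

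With $\ringmap$ recognized as an epimorphism of rings, Theorem~\ref{theorem:epi_induces_equivalence} directly yields that
\[
\ringmap_{\ast}: \Beh{R}{M_{|\ringmap}} \xrightarrow{\sim} \Beh{R/I}{M}
\]
is an equivalence of categories, which is precisely the assertion. There is no real obstacle here: the substantive work was already carried out in the preceding theorem, and all that remains is the trivial observation that surjections are ring epimorphisms. (Note that the converse direction fails in general, for instance the inclusion $\Z \hookrightarrow \Q$ is a non-surjective ring epimorphism, but we need only the easy implication.)
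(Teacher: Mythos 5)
Your proof is correct and follows exactly the same route as the paper: the paper's proof is the one-line observation that $\ringmap: R \rightarrow R/I$ is a ring epimorphism and then an appeal to Theorem~\ref{theorem:epi_induces_equivalence}. You merely spell out the elementary fact that surjective ring homomorphisms are epimorphisms, which the paper leaves implicit.
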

\begin{proof}
Since $\ringmap: R \rightarrow R/I$ is an epimorphism, we can apply \ref{theorem:epi_induces_equivalence}.
\end{proof}

\begin{para}
    For the next corollary, we need the localization of an arbitrary ring $R$ at an arbitrary subset $S \subseteq R$.
    Such a notion can be defined by means of a universal property.
    The resulting canonical map $R \rightarrow S^{-1}R$ is always an epimorphism in the category of rings, even though it is not necessarily a surjective map on the level of sets.
\end{para}

\begin{corollary}
Let $\ringmap: R \rightarrow S^{-1}R$ be the canonical ring morphism induced by a multiplicatively closed subset $S \subseteq R$.
If $M \in S^{-1}R\Modl$, then 
\[
\ringmap_{\ast}: \Beh{R}{M_{|\ringmap}} \xrightarrow{\sim} \Beh{S^{-1}R}{M}
\]
is an equivalence of categories.
\end{corollary}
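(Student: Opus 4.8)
The plan is to deduce this corollary directly from Theorem \ref{theorem:epi_induces_equivalence}, precisely as the corollary for the quotient map $R \to R/I$ was deduced. That theorem already asserts that $\ringmap_{\ast}$ is an equivalence whenever $\ringmap$ is an epimorphism of rings, so the only thing left to supply is the observation that the localization map $\ringmap: R \to S^{-1}R$ is such an epimorphism.

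This observation is recorded in the paragraph preceding the corollary, but I would make it explicit as follows. The ring $S^{-1}R$ is characterized by a universal property: it is initial among rings under $R$ in which every image of an element of $S$ is invertible. To verify that $\ringmap$ is an epimorphism, suppose $f, g: S^{-1}R \to U$ are ring maps with $f \circ \ringmap = g \circ \ringmap$. The common composite $f \circ \ringmap$ is a ring map out of $R$ that inverts every $s \in S$, since the image of the already-invertible element $\ringmap(s)$ under a ring homomorphism is again invertible; by the uniqueness clause of the universal property it therefore factors through $\ringmap$ in exactly one way, forcing $f = g$. Thus $\ringmap$ is an epimorphism in the category of rings.

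With this in hand, applying Theorem \ref{theorem:epi_induces_equivalence} to $\ringmap: R \to S^{-1}R$ and the $S^{-1}R$-module $M$ produces the restriction-of-scalars module $M_{|\ringmap} \in R\Modl$ together with the desired equivalence $\ringmap_{\ast}: \Beh{R}{M_{|\ringmap}} \xrightarrow{\sim} \Beh{S^{-1}R}{M}$. I expect no genuine obstacle: all the substantive content lives in Theorem \ref{theorem:epi_induces_equivalence}, which itself rests on the characterization of ring epimorphisms via Serre quotients of $R\modl\modl$ in Lemma \ref{lemma:prest_epis}. The single point that warrants a word of caution is not to conflate the categorical notion of epimorphism with surjectivity of the underlying set map: although $R \to S^{-1}R$ need be neither injective nor surjective, it is always a ring epimorphism, which is exactly the hypothesis the theorem requires.
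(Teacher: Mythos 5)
Your proposal is correct and follows exactly the paper's own route: cite that the localization map $\ringmap: R \rightarrow S^{-1}R$ is an epimorphism of rings (which the paper records in the paragraph preceding the corollary) and then apply Theorem \ref{theorem:epi_induces_equivalence}. Your explicit verification of the epimorphism property via the universal property of localization is a sound elaboration of a fact the paper simply asserts, and your caution about not conflating epimorphisms with surjections matches the paper's own remark.
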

\begin{proof}
Since $\ringmap: R \rightarrow S^{-1}R$ is an epimorphism, we can apply \ref{theorem:epi_induces_equivalence}.
\end{proof}

Again, as a preparation of the next \ref{theorem:compute_beh_via_ring_of_def_scalars}, we need the following lemma.

\begin{lemma}\label{lemma:prest_equivalence_ring_of_def_scalars}
Let 
\begin{itemize}
    \item $\CC \subseteq R\modl\modl$ be a Serre subcategory,
    \item $R_{\CC}$ denote the ring of endomorphisms of $\ff$ (the forgetful functor $R\Modl \rightarrow \Ab$) regarded as an object in the Serre quotient $\frac{R\modl\modl}{\CC}$,
    \item $G$ denote the functor given by the natural inclusion $R_{\CC} \rightarrow \frac{R\modl\modl}{\CC}$,
    \item $F$ denote the functor given by the composition of functors $R \rightarrow R_{\CC} \rightarrow R_{\CC}\modl\modl$.
\end{itemize}
Then we have the following commutative diagram (up to natural isomorphism) with the bottom horizontal arrow an equivalence of categories:
\begin{center}
   \begin{tikzpicture}[mylabel/.style={fill=white}]
    \coordinate (r) at (6,0);
    \coordinate (d) at (0,-2);
    \node (R) {$R$};
    \node (RC) at ($(R)+(r)$) {$R_{\CC}$};
    \node (AbR) at ($(R)+(d)$) {$R\modl\modl$};
    \node (AbRC) at ($(RC)+(d)$) {$R_{\CC}\modl\modl$};
    \node (AbRmodC) at ($(AbR)+(d)$) {$\frac{R\modl\modl}{\CC}$};
    \node (AbRCmodKer) at ($(AbRC)+(d)$) {$\frac{R_{\CC}\modl\modl}{\kernel( \ev{G} )}$};
    
    \draw[->,thick] (R) to (RC);
    \draw[->,thick] (R) to (AbR);
    \draw[->,thick] (RC) to (AbRC);
    \draw[->,thick] (AbR) to node[above]{$\ev{F}$} (AbRC);
    \draw[->,thick] (AbR) to (AbRmodC);
    \draw[->,thick] (AbRC) to (AbRCmodKer);
    \draw[->,thick] (AbRC) to node[mylabel]{$\ev{G}$}(AbRmodC); 
    \draw[->,thick] (AbRCmodKer) to node[mylabel]{$\simeq$} (AbRmodC);
    \end{tikzpicture}
\end{center}
Here, the lower two vertical arrows are the canonical quotient functors, and the equivalence is induced by the factorization of $\ev{G}$ described in \ref{para:explicit_serre_quotients}.
\end{lemma}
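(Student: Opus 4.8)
The plan is to record the commutativity of the diagram first and then to prove that the factorization $\overline{\ev{G}}\colon \frac{R_{\CC}\modl\modl}{\kernel(\ev{G})} \to \frac{R\modl\modl}{\CC}$ of $\ev{G}$ is an equivalence. For the commutativity I would name the canonical functors: let $J\colon R \to R\modl\modl$, let $Q\colon R\modl\modl \to \frac{R\modl\modl}{\CC}$ be the quotient, let $\iota\colon R \to R_{\CC}$ be the ring map sending $r$ to the class of $Q(\ff \xrightarrow{r} \ff)$ in $R_{\CC} = \End_{R\modl\modl/\CC}(Q(\ff))$, and let $J'\colon R_{\CC} \to R_{\CC}\modl\modl$. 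By the defining universal properties we have $\ev{F}\circ J \simeq F = J'\circ \iota$ and $\ev{G}\circ J' \simeq G$. Now $G\circ\iota$ and $Q\circ J$ both send the single object of $R$ to $Q(\ff)$ and each $r$ to $Q(\ff \xrightarrow{r} \ff)$, hence agree; cancelling $J$ on the right by the universal property of $R\modl\modl$ (Theorem \ref{theorem:up_Rmodmod}) yields $\ev{G}\circ\ev{F} \simeq Q$. Together with $\overline{\ev{G}}\circ Q' \simeq \ev{G}$ from \ref{para:explicit_serre_quotients}, this accounts for every inner cell of the diagram.

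The heart of the argument is the claim that $\iota\colon R \to R_{\CC}$ is an \emph{epimorphism of rings}. The ring $R_{\CC} = \End_{R\modl\modl/\CC}(Q(\ff))$ is exactly the ring of definable scalars attached to $\CC$ (equivalently, to the corresponding definable subcategory of $R\Modl$), and the canonical map of $R$ into it is a ring epimorphism by Prest's theory of definable scalars \cite{PrestPSL}. Granting this, Lemma \ref{lemma:prest_epis} applied to $\iota$ shows that $\ev{F}$ induces an equivalence $\frac{R\modl\modl}{\kernel(\ev{F})} \simeq R_{\CC}\modl\modl$.

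It then remains to assemble the two quotients. From $Q \simeq \ev{G}\circ\ev{F}$ we get $\kernel(\ev{F}) \subseteq \kernel(Q) = \CC$, so the two Serre subcategories are nested, and I would invoke the standard third-isomorphism theorem for Serre quotients \cite{Gab_thesis}: the image of $\CC$ is a Serre subcategory of $\frac{R\modl\modl}{\kernel(\ev{F})}$ and
\[
\frac{R\modl\modl}{\CC} \simeq \frac{R\modl\modl/\kernel(\ev{F})}{\CC/\kernel(\ev{F})}.
\]
Transporting along the equivalence $\frac{R\modl\modl}{\kernel(\ev{F})}\simeq R_{\CC}\modl\modl$ turns the right-hand side into a Serre quotient of $R_{\CC}\modl\modl$. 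A routine check using $\ev{G}\circ\ev{F}\simeq Q$ identifies the subcategory being quotiented out with $\kernel(\ev{G})$ and the resulting comparison functor with $\overline{\ev{G}}$: indeed $\ev{G}(\ev{F}(X)) \simeq Q(X)$ shows $\ev{F}(\CC) \subseteq \kernel(\ev{G})$, and since $\ev{F}$ is essentially surjective onto $R_{\CC}\modl\modl$ the reverse inclusion follows. This gives $\frac{R_{\CC}\modl\modl}{\kernel(\ev{G})} \simeq \frac{R\modl\modl}{\CC}$, as claimed.

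The main obstacle is the middle step, namely that $R \to R_{\CC}$ is a ring epimorphism; everything else is formal manipulation of universal properties and of the lattice of Serre subcategories. I would therefore cite Prest's characterization of the ring of definable scalars, or, if a self-contained route is preferred, verify the epimorphism property in one of its standard equivalent forms (for instance that restriction of scalars $R_{\CC}\Modl \to R\Modl$ is fully faithful, reflecting that the $R_{\CC}$-action on objects of $\frac{R\modl\modl}{\CC}$ is already pinned down by the $R$-action).
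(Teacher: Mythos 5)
Your opening bookkeeping is fine: the identities $\ev{F}\circ J \simeq J'\circ\iota$, $\ev{G}\circ J'\simeq G$, and the cancellation of $J$ giving $\ev{G}\circ\ev{F}\simeq Q$ are all correct, and so is the inclusion $\kernel(\ev{F})\subseteq\CC$. The genuine gap is exactly the step you call the heart of the argument: the canonical map $\iota\colon R\to R_{\CC}$ to a ring of definable scalars is \emph{not} an epimorphism of rings in general, and Prest's theory does not assert this. Here is a counterexample inside the paper's own framework. Let $k$ be a field, $R:=k[x^2,x^3]\subseteq k[x]$ (the cuspidal cubic), $M:=k[x]$ viewed as an $R$-module, and $\CC:=\kernel(\ev{M})$, so that $\frac{R\modl\modl}{\CC}=\Beh{R}{M}$ and $R_{\CC}=\End_{\Beh{R}{M}}(\asBeh{\ff}{M})$. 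First, $R_{\CC}$ contains multiplication by $x$: the subfunctor $\mathcal{E}:=\mathcal{B}(\rho,-)\hookrightarrow\ff^{\oplus 2}$ cut out by the quantifier-free pp formula $\rho(u,v)\colon x^3u-x^2v=0$ (coefficients in $R$) satisfies $\mathcal{E}(M)=\{(u,xu)\mid u\in M\}$, since $x^2$ can be cancelled in the domain $k[x]$; so if $\beta,\alpha\colon\mathcal{E}\to\ff$ denote the two coordinate projections, then $\beta_M$ is an isomorphism and $\alpha_M\circ\beta_M^{-1}$ is multiplication by $x$, which by \ref{remark:morphisms_of_beh} is an endomorphism of $\asBeh{\ff}{M}$. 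Second, $R_{\CC}$ is no bigger: every endomorphism of $\asBeh{\ff}{M}$ has the form $\alpha_M\circ\beta_M^{-1}$ by \ref{remark:morphisms_of_beh}, and naturality of $\alpha,\beta$ (extended to $R\Modl$ as in \ref{para:eval_at_a_module}) forces it to commute with every $f\in\End_R(M)$; since $\End_R(M)=k[x]$ acting by multiplication, we get $R_{\CC}\subseteq\End_{k[x]}(k[x])=k[x]$. Hence $R_{\CC}=k[x]$ and $\iota$ is the inclusion $k[x^2,x^3]\hookrightarrow k[x]$. This is a module-finite, injective, non-surjective ring map, and a finite epimorphism of commutative rings is necessarily surjective (tensor $R\to k[x]\to k[x]/R\to 0$ with $k[x]$ over $R$: the epimorphism property gives $k[x]\otimes_R(k[x]/R)=0$, and Nakayama's lemma then forces $k[x]/R=0$). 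So $\iota$ is not an epimorphism.

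Since Lemma \ref{lemma:prest_epis} is an if-and-only-if, in this example $\ev{F}$ does \emph{not} induce an equivalence $\frac{R\modl\modl}{\kernel(\ev{F})}\simeq R_{\CC}\modl\modl$, so your second step fails; and your third step is infected as well, because identifying the image of $\CC$ with $\kernel(\ev{G})$ explicitly invokes essential surjectivity of $\ev{F}$, which is precisely what is missing. The lemma itself is nevertheless true: its real content is that $\ev{G}$ (not $\ev{F}$) is full and essentially surjective, with the nontrivial Serre subcategory $\kernel(\ev{G})$ absorbing exactly the failure of $\iota$ to be an epimorphism. (In the example, the lemma yields $\Beh{R}{M}\simeq\frac{k[x]\modl\modl}{\kernel(\ev{G})}$, a proper quotient of $k[x]\modl\modl$.) This has to be proved directly, e.g.\ by a generation argument: every object of $\frac{R\modl\modl}{\CC}$ is obtained from finite direct sums of $Q(\ff)$ by finitely many kernels and cokernels, all morphisms between such direct sums are matrices over $R_{\CC}=\End(Q(\ff))$ and hence lie in the image of $\ev{G}$, so exactness of $\ev{G}$ gives fullness and essential surjectivity, and an exact, full, essentially surjective functor induces an equivalence from the Serre quotient by its kernel. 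This is the route of \cite[Theorem 12.8.1]{PrestPSL}, which is all the paper itself cites. Your fallback suggestion (full faithfulness of restriction $R_{\CC}\Modl\to R\Modl$) fails for the same reason: definable scalars are pinned down only on members of the definable subcategory attached to $\CC$, not on arbitrary $R_{\CC}$-modules; in the example, $R$-linear maps of the $k[x]$-module $k[x]/(x^2)$ need not be $k[x]$-linear.
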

\begin{proof}
See \cite[Theorem 12.8.1]{PrestPSL}.
\end{proof}

\begin{definition}\label{definition:ring_of_def_scalars}
    Let $\M \in R\Modl$. Recall that we denote by $\ff$ the forgetful functor $R\Modl \rightarrow \Ab$.
    Then the endomorphism ring
    \[
    R_{\M} := \End_{\Beh{R}{M}}(\asBeh{\ff}{\M})
    \]
    is called the \textbf{ring of definable scalars} of $\M$.
    Note that $R_{\M}$ can actually be seen as an $R$-algebra, since we have a functor $R \rightarrow \Beh{R}{M}$ by \ref{lemma:data_of_beh} that directly corresponds to a ring map
    \[
    R \rightarrow R_{\M},
    \]
    which we call the \textbf{structure map} of $R_{\M}$ as an $R$-algebra.
\end{definition}

\begin{remark}
In \cite[Section 6]{PrestPSL}, the ring $R_{\M}$ is described in terms of pp-formulas. Furthermore, it is proven \cite[Section 12.8]{PrestPSL} that this coincides with the description that we used as our definition.
\end{remark}

\begin{construction}\label{construction:modules_over_ring_of_def_scalars}
Let $\M \in R\Modl$.
Then we can construct an $R_{\M}$-module, which we denote by $_{R_{\M}}\M$, that fits into the following commuative (up to natural isomorphism) diagram:
\begin{center}
    \begin{tikzpicture}[mylabel/.style={fill=white}, baseline=(R)]
    \coordinate (r) at (3,0);
    \coordinate (d) at (0,-1.5);
    \node (R) {$R$};
    \node (RM) at ($(R)+2*(d)$) {$R_{\M}$};
    \node (Beh) at ($(R)+(r)+(d)$) {$\Beh{R}{\M}$};
    \node (Ab) at ($(Beh) + (r)$) {$\Ab$};
    \draw[->,thick] (R) to (Beh);
    \draw[->,thick] (R) to (RM);
    \draw[->,thick] (RM) to (Beh);
    \draw[->,thick] (Beh) to (Ab);
    \draw[->,thick,out=0,in=180-45] (R) to node[below]{$\M$} (Ab);
    \draw[->,thick,out=0,in=180+45] (RM) to node[above]{$_{R_{\M}}\M$}(Ab);
    \end{tikzpicture}
\end{center}
How to read this diagram: from \ref{lemma:data_of_beh}, we get the commutative triangle at the top, from \ref{definition:ring_of_def_scalars}, we get the commutative triangle on the left. Then, we simply define $_{R_{\M}}\M$ as the composition of the functors
\[
R_{\M} \rightarrow \Beh{R}{\M} \rightarrow \Ab.
\]

\end{construction}

\begin{theorem}\label{theorem:compute_beh_via_ring_of_def_scalars}
Let $\M \in R\Modl$ and let $\ringmap: R \rightarrow R_{\M}$ be the structure map of the ring of definable scalars of $M$ (see \ref{definition:ring_of_def_scalars}). Let $_{R_{\M}}\M$ be the $R_{\M}$-module of \ref{construction:modules_over_ring_of_def_scalars}.
Then the restriction of $_{R_{\M}}\M$ along $\ringmap$ yields $\M$, and
\[
\ringmap_{\ast}: \Beh{R}{\M} \xrightarrow{\sim} \Beh{R_{\M}}{_{R_{\M}}\M}
\]
is an equivalence of categories.
\end{theorem}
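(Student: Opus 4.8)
The plan is to realize the claimed equivalence directly from the description of the ring of definable scalars provided by \ref{lemma:prest_equivalence_ring_of_def_scalars}, rather than first proving that $\ringmap$ is a ring epimorphism and invoking \ref{theorem:epi_induces_equivalence} (that route is also available, but establishing the epimorphism property is more work than we need here). Throughout I apply \ref{lemma:prest_equivalence_ring_of_def_scalars} to the Serre subcategory $\CC := \kernel(\ev{\M})$; for this choice we have $R_{\CC} = R_{\M}$ and $\frac{R\modl\modl}{\CC} = \Beh{R}{\M}$, and the lemma produces an equivalence $\Phi\colon \frac{R_{\M}\modl\modl}{\kernel(\ev{G})} \xrightarrow{\sim} \Beh{R}{\M}$ with $\Phi \circ Q \simeq \ev{G}$, where $G\colon R_{\M} \to \Beh{R}{\M}$ is the inclusion of $\End_{\Beh{R}{\M}}(\asBeh{\ff}{\M})$ and $Q$ is the canonical quotient functor.

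First I would dispose of the restriction-of-scalars assertion. By \ref{construction:modules_over_ring_of_def_scalars} the module $_{R_{\M}}\M$ is by definition the composite $R_{\M} \xrightarrow{G} \Beh{R}{\M} \to \Ab$. Restricting along $\ringmap$ and using that $G \circ \ringmap$ is the canonical functor $R \to \Beh{R}{\M}$ (the commutativity built into \ref{definition:ring_of_def_scalars} and \ref{construction:modules_over_ring_of_def_scalars}), \ref{lemma:data_of_beh}(3) yields $(_{R_{\M}}\M)_{|\ringmap} \simeq \M$. This is what makes $\ringmap_{\ast}$ a functor of type $\Beh{R}{\M} \to \Beh{R_{\M}}{_{R_{\M}}\M}$ in the sense of \ref{construction:functor_change_of_rings}.

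Next I would match the two Serre subcategories of $R_{\M}\modl\modl$ being quotiented. Composing $\ev{G}$ with the faithful exact functor $\Beh{R}{\M} \to \Ab$ gives an exact functor $R_{\M}\modl\modl \to \Ab$ whose restriction to $R_{\M}$ is precisely $_{R_{\M}}\M$; by the universal property \ref{theorem:up_Rmodmod} this composite must be $\ev{_{R_{\M}}\M}$. Since a faithful exact functor reflects zero objects (Appendix \ref{lemma:faithful_implies_zero_test}), I conclude $\kernel(\ev{G}) = \kernel(\ev{_{R_{\M}}\M})$, so that $\frac{R_{\M}\modl\modl}{\kernel(\ev{G})}$ is literally $\Beh{R_{\M}}{_{R_{\M}}\M}$ and $\Phi$ becomes an equivalence $\Beh{R_{\M}}{_{R_{\M}}\M} \xrightarrow{\sim} \Beh{R}{\M}$.

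The hard part will be the identification $\ringmap_{\ast} \simeq \Phi^{-1}$, i.e. checking that the abstract equivalence $\Phi$ is really the inverse of the comparison functor and not merely some equivalence. I would settle this by the uniqueness clause of \ref{theorem:up_of_beh}: it suffices to show that $\Phi^{-1}$ is exact, commutes with the two forgetful functors to $\Ab$, and sends the canonical image of $R \to \Beh{R}{\M}$ to $R \xrightarrow{\ringmap} R_{\M} \to \Beh{R_{\M}}{_{R_{\M}}\M}$. Commutation with the forgetful functors follows by cancelling the quotient functor $Q$ (an epimorphism by \ref{remark:can_quotient_is_epi}) from $S \circ \Phi \circ Q \simeq S \circ \ev{G} \simeq \ev{_{R_{\M}}\M}$, where $S$ denotes the forgetful functor on $\Beh{R}{\M}$; and $\Phi$ carries $\asBeh{\ff}{_{R_{\M}}\M}$ to $\ev{G}(\ff) = \asBeh{\ff}{\M}$. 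The only genuinely delicate bookkeeping is tracking the full $R_{\M}$-action (not just the $R$-action) through $\Phi$, which is exactly where the identification $R_{\M} = \End_{\Beh{R}{\M}}(\asBeh{\ff}{\M})$ from \ref{definition:ring_of_def_scalars} is needed; once this compatibility on endomorphisms is verified, uniqueness in \ref{theorem:up_of_beh} gives $\ringmap_{\ast} \simeq \Phi^{-1}$, an equivalence.
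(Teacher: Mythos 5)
Your proposal is correct and follows essentially the same route as the paper: both apply \ref{lemma:prest_equivalence_ring_of_def_scalars} with $\CC := \kernel(\ev{\M})$, identify the resulting Serre quotient $\frac{R_{\M}\modl\modl}{\kernel(\ev{G})}$ with $\Beh{R_{\M}}{_{R_{\M}}\M}$, and then invoke the uniqueness characterization of $\ringmap_{\ast}$ from \ref{construction:functor_change_of_rings}/\ref{theorem:up_of_beh} to conclude that $\ringmap_{\ast}$ is the inverse of the equivalence produced by the lemma. The only difference is that you spell out two details the paper leaves implicit, namely the verification $\kernel(\ev{G}) = \kernel(\ev{_{R_{\M}}\M})$ via the faithful exact forgetful functor and the cancellation-of-$Q$ argument for commutation with the functors to $\Ab$, which is a harmless (indeed welcome) elaboration rather than a different method.
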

\begin{proof}
The restriction of $_{R_{\M}}\M$ along $\ringmap$ yields $\M$ due to the commutativity of the diagram in \ref{construction:modules_over_ring_of_def_scalars}.

Next, we apply \ref{lemma:prest_equivalence_ring_of_def_scalars} in the case where $\CC := \kernel( \ev{\M} )$. In the notation of \ref{lemma:prest_equivalence_ring_of_def_scalars}, this yields $R_{\CC} = R_{\M}$, $\frac{R\modl\modl}{\CC} \simeq \Beh{R}{\M}$ and $\frac{R_{\CC}\modl\modl}{\kernel( \ev{G} )} \simeq \Beh{R_{\M}}{_{R_{\M}}\M}$.
Now, it follows from \ref{lemma:prest_equivalence_ring_of_def_scalars} that we have a commutative diagram (up to natural isomorphism)
\begin{equation}\label{equation:proof_equiv_ring_of_def_scalars}
    \begin{tikzpicture}[mylabel/.style={fill=white}, baseline=(R)]
    \coordinate (r) at (5,0);
    \coordinate (d) at (0,-1);
    \node (R) {$R$};
    \node (S) at ($(R)+0.5*(r)+0.5*(d)$) {$R_{\M}$};
    \node (A) at ($(R)+(r)-(d)$) {$\Beh{R}{\M}$};
    \node (B) at ($(R)+(r)+(d)$) {$\Beh{R_{\M}}{_{R_{\M}}M}$};
    \node (Ab) at ($(R)+2*(r)$) {$\Ab$};
    \draw[->,thick] (R) to node[mylabel]{$\ringmap$} (S);
    \draw[->,thick] (R) to (A);
    \draw[->,thick] (S) to (B);
    \draw[->,thick] (A) to (Ab);
    \draw[->,thick] (B) to (Ab);
    \draw[->,thick] (B) to node[mylabel]{$\simeq$}(A);
    \end{tikzpicture}
\end{equation}
Since $\ringmap_{\ast}$ is characterized as the unique functor (up to natural isomorphism) that renders the diagram
\begin{center}
    \begin{tikzpicture}[mylabel/.style={fill=white}, baseline=(R)]
    \coordinate (r) at (5,0);
    \coordinate (d) at (0,-1);
    \node (R) {$R$};
    \node (S) at ($(R)+0.5*(r)+0.5*(d)$) {$R_{\M}$};
    \node (A) at ($(R)+(r)-(d)$) {$\Beh{R}{M}$};
    \node (B) at ($(R)+(r)+(d)$) {$\Beh{R_{\M}}{_{R_{\M}}M}$};
    \node (Ab) at ($(R)+2*(r)$) {$\Ab$};
    \draw[->,thick] (R) to node[mylabel]{$\ringmap$} (S);
    \draw[->,thick] (R) to (A);
    \draw[->,thick] (S) to (B);
    \draw[->,thick] (A) to (Ab);
    \draw[->,thick] (B) to (Ab);
    \draw[->,thick,dashed] (A) to node[right]{$\ringmap_{\ast}$}(B);
    \end{tikzpicture}
\end{center}
commutative up to natural isomorphism (see \ref{construction:functor_change_of_rings}), $\ringmap_{\ast}$ has to be given by the inverse of the vertical arrow in \eqref{equation:proof_equiv_ring_of_def_scalars}. In particular, $\ringmap_{\ast}$ is an equivalence.
\end{proof}

\section{An example: delay-differential systems}\label{section:example}
\begin{ex}\label{example:delay_differential_systems}
In this example, we study delay-differential systems with constant coefficients and a polynomial signal space.
Let $R := K[\sigma, \partial]$ be the polynomial ring in two indeterminates $\sigma$, $\partial$ over a field $K$ of characteristic $0$.
Then the set of polynomials with coefficients in $K$ in one indeterminate $M := K[t]$ becomes an $R$-module by letting $\partial$ act via differentiation
\[
\partial(p) := \frac{d p}{d t}
\]
and by letting $\sigma$ act via a unit shift
\[
(\sigma p)(t) := p(t + 1)
\]
for all $n \in \Nzero$ and $p \in M$. This gives a well-defined module structure since the two operators commute:
\begin{align*}
(\partial \sigma) p(t) &= \partial (p(t+1)) \\
&= \frac{d p}{d t}(t+1) \\
&=\sigma (\frac{d p}{d t}(t)) = (\sigma \partial) p(t)
\end{align*}
for all $p(t) \in M$.

It is our goal to describe the category $\Beh{R}{{_R}M}$.
Note that ${_R}M$ is not an fp-injective $R$-module and not an fp-cogenerator: for example, the following non-zero mono
\[
R/\langle \partial \rangle \xrightarrow{1 \mapsto (\sigma - 1) } R/\langle  \partial \rangle
\]
is mapped to the zero map (which is not an epi)
\[
K \xleftarrow{0} K
\]
via $\Hom(-,{_R}M)$, where we compute
\[
\Hom( R/\langle \partial \rangle, {_R}M ) \simeq \{ p \in {_R}M \mid \partial p = 0 \} = K.
\]
Since $\partial$ does not act via automorphisms on ${_R}M$, the same counter example remains valid if we would replace $R$ by the localization $S^{-1}R$ w.r.t. the set $S$ that consists of all $s \in R$ which act via automorphisms on ${_R}M$. 
However, we will see that $M$ becomes an injective module over its ring of definable scalars (see \ref{definition:ring_of_def_scalars}).

For this, we first note that $M$ is also a module over the ring of formal power series
$P := K[[\partial]]$ in one indeterminate $\partial$, where the action is given by
\[
(\sum_{i=0}^{\infty}a_i \partial^i) p:= \sum_{i=0}^{\infty}a_i \frac{d^i p}{d t^i}.
\]
for any given power series $(\sum_{i=0}^{\infty}a_i \partial^i) \in P$. Note that this expression is well-defined since $\frac{d^i p}{d t^i}$ eventually vanishes.
Next, we can reexpress the action of $\sigma$ by the element
\[
\exp( \partial ) := \sum_{i = 0}^{\infty}\frac{1}{i!} \partial^i \in P
\]
since
\begin{align*}
\exp(\partial)(t^n) &= \sum_{i = 0}^n\frac{1}{i!}\partial^i(t^n) \\
&= \sum_{i = 0}^n\frac{1}{i!}\frac{n!}{(n-i)!}t^{n-i} \\
&= \sum_{i = 0}^n \binom{n}{i}t^{n-i} = (t+1)^n = \sigma(t^n).
\end{align*}
We get a ring homomorphism
\begin{align*}
\ringmap: R &\rightarrow P \\
\partial &\mapsto \partial \\
\sigma &\mapsto \exp(\partial)
\end{align*}
and if we restrict $_{P}M$ along $\ringmap$, we get back $_{R}M$.
It follows from \ref{construction:functor_change_of_rings} that we get a faithful functor
\[
\ringmap_{\ast}: \Beh{R}{_R{M}} \rightarrow \Beh{\hat{R}}{_{P}M}.
\]
The ring $P$ is a local principal ideal domain with maximal ideal given by $\langle \partial \rangle$. Moreover,
the module ${_P}M$ is an injective cogenerator, since we have an isomorphism of $P$-modules
\begin{align*}
    {_P}M &\rightarrow \frac{\mathrm{Quot}( K[[\partial]] )}{K[[\partial]]} \\
    t^i &\mapsto i! \cdot \frac{1}{\partial^i}
\end{align*}
where $\mathrm{Quot}( K[[\partial]] )$ denotes the field of fractions of $K[[\partial]]$, and the module on the right hand side is known to be an injective cogenerator.
It follows from \ref{corollary:beh_rmodop_equivalence} that we have an equivalence of categories
\[
\Beh{\hat{R}}{_{P}M} \simeq (P\modl)^{\op}.
\]
In particular, 
\[
\End( \asBeh{\ff}{_{P}M} ) \cong \End_{(P\modl)^{\op}}( {_P}P ) \cong P.
\]
Since $\ringmap_{\ast}$ is faithful, it follows that we can compute the ring
\[
D := \End( \asBeh{\ff}{{_R}M} ) \subseteq \End( \asBeh{\ff}{_{P}M} ) \cong P
\]
of definable scalars of ${_R}M$ as a $K$-subalgebra of $P$.
We claim that $D$ satisfies the following three properties:
\begin{enumerate}
    \item The elements $\partial$ and $\exp({\partial})$ are in $D$.
    \item If $p \in D$ is invertible in $P$, then $p^{-1} \in D$.
    \item Whenever $p\partial \in D$ for $p \in P$, then $p \in D$.
\end{enumerate}
The first property is clear, since $\partial$ and $\exp(\partial)$ in $P$ correspond to $\partial$ and $\sigma$ in $R$.
The second property follows from \ref{lemma:desirable_features_Beh_monoepiiso}.
The third property follows from \ref{lemma:desirable_features_create_mor_via_epis} applied to the commutative triangle
\begin{center}
\begin{tikzpicture}[baseline=(base)]
        \coordinate (r) at (1.5,0);
        \coordinate (d) at (0,-1);
        \node (A) {$\ff({M})$};
        \node (B) at ($(A)+(r)+(d)$) {$\ff({M})$};
        \node (C) at ($(A) + 2*(r)$) {$\ff({M})$};
        \node (base) at ($(A) + 0.5*(d)$) {};
        \draw[->>,thick] (B) to node[below]{$\partial$} (A);
        \draw[->,thick] (B) to node[below,xshift=0.5em]{$p\partial$} (C);
        \draw[->, thick] (A) to node[above]{$p$} (C);
\end{tikzpicture}
\end{center}
Note that
\[
\partial: \ff(M) \twoheadrightarrow \ff(M)
\]
is surjective since every function in $M$ has an antiderivative.

It follows that $D$ is a local principal ideal domain with maximal ideal given by $\langle \partial \rangle$, since any element in $P$ can be written as $\partial^i \cdot q$ for $q \in P$ an invertible element, $i \in \Nzero$. The module ${_D}M$ is divisible since $\partial$ acts via a surjection on $M$. Since $D$ is a principal ideal domain, ${_D}M$ is an injective $D$-module. Since ${_D}M$ is clearly an fp-cogenerator, we end up with
\[
\Beh{R}{{_R}M} \simeq \Beh{D}{{_D}M} \simeq (D\modl)^{\op}
\]
where we combine \ref{theorem:compute_beh_via_ring_of_def_scalars} and \ref{corollary:beh_rmodop_equivalence}.
Thus, we managed to describe $\Beh{R}{{_R}M}$ as an opposite category of finitely presented modules over a subring of the ring of formal power series.
\end{ex}

\begin{ex}
Let $R$, $M$, and $D$ be as in \ref{example:delay_differential_systems}.
Our equivalence $\Beh{R}{{_R}M} \simeq (D\modl)^{\op}$ enables us to explain the equality of behaviors
\[
\{ p \in M \mid \partial p = 0 \} = K = \{ p \in M \mid (\sigma - 1) p = 0 \}
\]
by the fact that we have an equality of ideals
\[
\langle \partial \rangle = \langle \exp( \partial ) - 1 \rangle
\]
in $D$, since 
\[
\exp( \partial ) - 1 = \partial \cdot q
\]
for $q \in D$ an invertible element.
\end{ex}

\begin{remark}
Delay-differential systems were studied by Habets in \cite{Habets}. In the theoretical part of his paper, Habets assumes that $R$ is a commutative domain and $M \in R\Modl$ is a module such that each $q \neq 0$ in $R$ acts via a surjection on $\ff(M)$ \cite[Assumption 4.1]{Habets}. He introduces a subring of $\End( \ff(M) )$, the endomorphism ring of the underlying abelian group of $M$, that is given by all $r \in \End( \ff(M) )$ that fit into a commutative triangle
\begin{center}
\begin{tikzpicture}[baseline=(base)]
        \coordinate (r) at (1.5,0);
        \coordinate (d) at (0,-1);
        \node (A) {$\ff({M})$};
        \node (B) at ($(A)+(r)+(d)$) {$\ff({M})$};
        \node (C) at ($(A) + 2*(r)$) {$\ff({M})$};
        \node (base) at ($(A) + 0.5*(d)$) {};
        \draw[->,thick] (B) to node[left, xshift=-1em]{$(x \mapsto qx)$} (A);
        \draw[->,thick] (B) to node[right, xshift=1em]{$(x \mapsto px)$} (C);
        \draw[->, thick] (A) to node[above]{$r$} (C);
\end{tikzpicture}
\end{center}
with $q, p \in R$. 
It follows that all such $r \in \End( \ff(M) )$ are also endomorphisms of $\asBeh{\ff}{M}$ by \ref{lemma:desirable_features_create_mor_via_epis}.
In particular, the ring proposed by Habets is a subring of the ring of definable scalars of $M$.
Habets uses this ring to solve the problem of system equivalence in his setup, i.e., the problem whether two behaviors are mutually included in each other.
\end{remark}

\section{Controllability and observability}\label{section:controllability}
In this section, we define the controllability of an abstract behavior.
In \ref{example:controllable_and_ext1} we show that our definition yields the well-established notion of controllability in contexts studied in algebraic systems theory. For an overview of the development of this notion in algebraic systems theory, we refer to \cite[Remark 4.3]{Robertz15}.

Moreover, we also define the observability of an abstract behavior as an idea that suggests itself in our setup.
The observability of an abstract behavior given by a pp formula turns out to be a trivial notion (Example \ref{example:observability_trivial_for_beh}) which presumably is why it has not been studied as a mere property of a single abstract behavior in the context of algebraic systems theory, but rather as a property of a relation between behaviors.

\begin{definition}
Let $\asBeh{\mathcal{G}}{M} \in \Beh{R}{M}$. We call the elements of the underlying abelian group of $\asBeh{\mathcal{G}}{M}$, i.e., all elements $t \in \mathcal{G}(M)$, the \textbf{trajectories} of $\asBeh{\mathcal{G}}{M}$.
By abuse of notation, we will also address trajectories by writing an expression like
\[ t \in \asBeh{\mathcal{G}}{M}.\]
\end{definition}

\begin{para}\label{para:unconstrained_trajectories}
The abstract signal space $\asBeh{\ff}{\M}$ plays a prominent role.
We may regard its trajectories as being ``unconstrained'', i.e., the trajectories in $\asBeh{\ff}{\M}$ are exactly those trajectories that we want to specify as being ``physically possible in our universe in which we model our systems''.
\end{para}

\begin{para}\label{para:interpretation_controllable_observable}
If we think of a morphism within $\Beh{R}{M}$ as a valid way to transform the trajectories from one abstract behavior to another abstract behavior, then a morphism of type $\asBeh{\ff}{M} \rightarrow \asBeh{\mathcal{G}}{M}$ is a valid way to transform unconstrained trajectories (see \ref{para:unconstrained_trajectories}) to trajectories of $\asBeh{\mathcal{G}}{M}$. In this sense such morphisms are a means of ``controlling'' $\asBeh{\mathcal{G}}{M}$.

Dually, a morphism of type $\asBeh{\mathcal{G}}{M} \rightarrow \asBeh{\ff}{M}$ is a valid way to transform a trajectory of $\asBeh{\mathcal{G}}{M}$ to an unconstrained trajectory. In this sense such morphisms are a means of ``observing'' $\asBeh{\mathcal{G}}{M}$.
\end{para}

\begin{definition}
We call $t \in \asBeh{\mathcal{G}}{M}$ \textbf{controllable} if there exists an $s \in \asBeh{\ff}{M}$ and a morphism
$\alpha: \asBeh{\ff}{M} \rightarrow \asBeh{\mathcal{G}}{M}$ such that $\alpha(s) = t$.
\end{definition}

\begin{para}
Here, the intuitive understanding of a trajectory being controllable is that it can be obtained from some ``unconstrained input trajectory'' in the sense of \ref{para:interpretation_controllable_observable}.
\end{para}

\begin{definition}
We call $t \in \asBeh{\mathcal{G}}{M}$ \textbf{unobservable} if for all morphisms $\alpha:  \asBeh{\mathcal{G}}{M}\rightarrow \asBeh{\ff}{M}$ we have $\alpha(t) = 0$.
\end{definition}

\begin{para}
Here, the intuitive understanding of a trajectory being unobservable is that it can not be ``observed'' in the sense of \ref{para:interpretation_controllable_observable}.
\end{para}

\begin{remark}
Clearly, the set of controllable trajectories forms a subgroup.
Also, the set of unobservable trajectories forms a subgroup.
\end{remark}

\begin{definition}
We call the abelian subgroup 
\[
\{ t \in \mathcal{G}(M) \mid \text{$t$ is controllable} \}
\]
of $\mathcal{G}(M)$ the \textbf{controllable part} of $\asBeh{\mathcal{G}}{M}$.
Dually, we call the abelian quotient group 
\[
\mathcal{G}(M)/\{ t \in \mathcal{G}(M) \mid \text{$t$ is unobservable} \}
\]
of $\mathcal{G}(M)$ the \textbf{observable quotient} of $\asBeh{\mathcal{G}}{M}$.
\end{definition}

\begin{remark}
Note that by \ref{corollary:desirable_features_at_most_one_lift_subobj}, there is at most one way (up to equality of subobjects) to turn the controllable part of $\asBeh{\mathcal{G}}{M}$ from merely an abelian subgroup into a subobject within $\Beh{R}{M}$.
Dually by \ref{corollary:desirable_features_at_most_one_lift_quoobj}, there is at most one way (up to equality of quotient objects) to turn the observable quotient of $\asBeh{\mathcal{G}}{M}$ into a quotient object within $\Beh{R}{M}$.
It is not known to the author how to characterize those rings $R$ and modules $M$ such that the controllable part (or the observable quotient) can always be turned into an object in $\Beh{R}{M}$.
However, we will discuss cases in this section where this is indeed possible.
\end{remark}

\begin{definition}
An object $\asBeh{\mathcal{G}}{M} \in \Beh{R}{M}$ is called
\begin{enumerate}
    \item \textbf{controllable} if its controllable part is equal (as an abelian subgroup) to $\mathcal{G}(M)$,
    \item \textbf{finitely controllable} if there exists an $n \in \Nzero$ and an epimorphism
    \[
    (\asBeh{\ff}{M})^{\oplus n} \twoheadrightarrow \asBeh{\mathcal{G}}{M}
    \]
    in $\Beh{R}{M}$,
    \item \textbf{observable} if its observable factor is equal (as an abelian quotient group) to $\mathcal{G}(M)$,
    \item \textbf{finitely observable} if there exists an $n \in \Nzero$ and a monomorphism 
    \[
    \asBeh{\mathcal{G}}{M} \hookrightarrow (\asBeh{\ff}{M})^{\oplus n}
    \]
    in $\Beh{R}{M}$.
\end{enumerate}
\end{definition}

\begin{remark}
Clearly, if $\asBeh{\mathcal{G}}{M}$ is finitely controllable then it is controllable.
Also, if $\asBeh{\mathcal{G}}{M}$ is finitely observable then it is observable.
\end{remark}

\begin{ex}\label{example:observability_trivial_for_beh}
Let $\phi$ be a pp formula in $k \in \Nzero$ free variables over $R$ and let $M \in R\Modl$.
By \ref{para:beh_canonical_inclusion} we have a natural inclusion $\mathcal{B}( \phi, - ) \hookrightarrow \ff^{\oplus k}$.
It follows that we have a monomorphism
\[
\asBeh{\mathcal{B}( \phi, - )}{M} \hookrightarrow \asBeh{\ff^{\oplus k}}{M}
\]
which shows that abstract behaviors that are defined by a pp formula are always finitely observable.
\end{ex}

\begin{ex}
Let $R$ be a right coherent ring and let $M \in R\Modl$ be fp-faithfully flat. By \ref{corollary:equivalence_beh_modR}, we have
\[
\Beh{R}{M} \simeq \modr R.
\]
It follows that an abstract behavior is finitely observable in this setup if and only if its corresponding finitely presented module $N \in \modr R$ can be embedded into a finitely generated free $R$-module.
\end{ex}

\begin{remark}
Since there is a module $M \in R\Modl$ such that
\[
\Beh{R}{M} \simeq R\modl\modl
\]
(see \ref{remark:all_Serre_quotients_arise_as_beh}) it makes sense to apply the notions of this section to objects in $R\modl\modl$. In particular, we can ask for the controllable part and the observable quotient of an object $\mathcal{G} \in R\modl\modl$.
\end{remark}

\begin{theorem}\label{theorem:controllable_part_rmodmod}
Let $R$ be a right coherent ring.
The image of the counit of the adjunction described in \ref{theorem:codefect_adjunction}
\[
(\codefect( \mathcal{G} ) \otimes - ) \rightarrow \mathcal{G}
\]
is the controllable part of $\mathcal{G} \in R\modl\modl$.
In particular, the controllable part is again an object in $R\modl\modl$ and as such, it is finitely controllable.
\end{theorem}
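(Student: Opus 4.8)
The plan is to unwind the definition of the controllable part and identify it with the image of the counit $\epsilon_{\mathcal{G}} \colon (\codefect(\mathcal{G}) \otimes -) \to \mathcal{G}$ coming from the adjunction $\mathrm{Tensor} \dashv \codefect$ of \ref{theorem:codefect_adjunction}. First I would fix a faithful exact functor $S := \ev{M} \colon R\modl\modl \to \Ab$ realizing $R\modl\modl$ as $\Beh{R}{M}$ (as in \ref{remark:all_Serre_quotients_arise_as_beh}), so that the trajectories of $\mathcal{G}$ are the elements of $S(\mathcal{G})$ and the abstract signal space $\asBeh{\ff}{M}$ corresponds to $\ff \in R\modl\modl$. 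Since $S$ is exact we have $S(\image(\epsilon_{\mathcal{G}})) = \image(S(\epsilon_{\mathcal{G}}))$, so it suffices to show that this subgroup of $S(\mathcal{G})$ coincides with the controllable part, which by definition is the subgroup $\sum_{\alpha} \image(S(\alpha))$, where $\alpha$ ranges over all morphisms $\ff \to \mathcal{G}$ in $R\modl\modl$.

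For the inclusion of the controllable part into $\image(\epsilon_{\mathcal{G}})$, I would use that $\ff \simeq (R_R \otimes -) = \mathrm{Tensor}(R_R)$, so that every morphism $\alpha \colon \ff \to \mathcal{G}$ is a map out of an object in the essential image of $\mathrm{Tensor}$. The defining universal property of the counit (equivalently, the triangle identity of the adjunction) then provides a factorization $\alpha = \epsilon_{\mathcal{G}} \circ \mathrm{Tensor}(\bar{\alpha})$, where $\bar{\alpha} \colon R_R \to \codefect(\mathcal{G})$ is the adjunct of $\alpha$. Applying $S$ gives $S(\alpha) = S(\epsilon_{\mathcal{G}}) \circ S(\mathrm{Tensor}(\bar{\alpha}))$, whence $\image(S(\alpha)) \subseteq \image(S(\epsilon_{\mathcal{G}}))$; summing over all $\alpha$ yields one inclusion.

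For the reverse inclusion I would use that $\codefect(\mathcal{G}) \in \modr R$ is finitely presented, hence finitely generated: choosing an epimorphism $R^{n \times 1} \twoheadrightarrow \codefect(\mathcal{G})$ in $\modr R$ and applying the right-exact functor $\mathrm{Tensor}$ produces an epimorphism $\ff^{\oplus n} \twoheadrightarrow (\codefect(\mathcal{G}) \otimes -)$. Precomposing $\epsilon_{\mathcal{G}}$ with this epimorphism does not change the image, so $\image(\epsilon_{\mathcal{G}})$ is the image of a morphism $\ff^{\oplus n} \to \mathcal{G}$ whose $n$ components are morphisms $\alpha_i \colon \ff \to \mathcal{G}$. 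Thus $S(\image(\epsilon_{\mathcal{G}})) = \sum_{i} \image(S(\alpha_i))$ is a finite sum of contributions lying in the controllable part. The same epimorphism $\ff^{\oplus n} \twoheadrightarrow \image(\epsilon_{\mathcal{G}})$ simultaneously exhibits $\image(\epsilon_{\mathcal{G}})$ as a subobject of $\mathcal{G}$ inside $R\modl\modl$ and witnesses that it is finitely controllable, which settles the last two assertions.

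The main obstacle I anticipate is bookkeeping rather than conceptual: one must keep straight that the controllable part is defined elementwise as a subgroup of $S(\mathcal{G})$, whereas $\image(\epsilon_{\mathcal{G}})$ is a genuine subobject in $R\modl\modl$, and repeatedly invoke exactness of $S$ to pass between $S(\image(-))$ and $\image(S(-))$. The only genuinely non-formal input is the factorization through the counit, which is precisely the statement that $\epsilon_{\mathcal{G}}$ is terminal among morphisms into $\mathcal{G}$ from objects of the form $(N \otimes -)$; this is exactly what \ref{theorem:codefect_adjunction} guarantees.
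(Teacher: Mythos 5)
Your proof is correct and takes essentially the same route as the paper's: both inclusions are obtained exactly as in the paper, namely by factoring every morphism $\ff \rightarrow \mathcal{G}$ through the counit via the adjunction of \ref{theorem:codefect_adjunction}, and by choosing an epimorphism $R^{n \times 1} \twoheadrightarrow \codefect(\mathcal{G})$ and using right-exactness of the tensor product to realize the image of the counit as the image of a morphism $\ff^{\oplus n} \rightarrow \mathcal{G}$, which also yields the finite controllability claim. Your only deviation is bookkeeping: you take the controllable part to be the subgroup $\sum_{\alpha} \image(S(\alpha))$, which agrees with the paper's elementwise definition because of the paper's preceding remark that the controllable trajectories form a subgroup.
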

\begin{proof}
Let $\alpha: \ff \rightarrow \mathcal{G}$ be a morphism.
Since $\ff \simeq (R \otimes -)$, the adjunction in \ref{theorem:codefect_adjunction} implies that there exists a morphism $\beta: R \rightarrow \codefect( \mathcal{G} )$ in $\modr R$ such that the following diagram commutes:
\begin{center}
       \begin{tikzpicture}[mylabel/.style={fill=white},baseline=($(A) + (d)$)]
        \coordinate (r) at (4,0);
        \coordinate (d) at (0,-1);
        \node (A) {$(\codefect( \mathcal{G} ) \otimes - )$};
        \node (B) at ($(A)+(r)+(d)$) {$\mathcal{G}$};
        \node (C) at ($(A)+2*(d)$) {$(R \otimes -)$};
        \draw[->,thick] (A) to (B);
        \draw[->,thick] (C) to node[below]{$\alpha$}(B);
        \draw[->,thick] (C) to node[left]{$\beta \otimes -$}(A);
        \end{tikzpicture}
\end{center}
It follows that the image of the counit contains the images of any given morphism of type $\ff \rightarrow \mathcal{G}$.

Conversely, since $\codefect(\mathcal{G})$ is finitely presented, there is an $n \in \Nzero$ and an epi in $\modr R$ of the form
\[
R^{n \times 1} \rightarrow \codefect(\mathcal{G}).
\]
The corresponding natural transformtion
\[
(R^{n \times 1} \otimes -)\rightarrow (\codefect(\mathcal{G}) \otimes -)
\]
is an epi in $R\modl\modl$ since taking the tensor product is right exact. It follows that the image of the counit only consists of images of morphisms of type $\ff \rightarrow \mathcal{G}$.
The claim follows.
\end{proof}

\begin{theorem}
Let $R$ be a left coherent ring.
The coimage of the unit of the adjunction described in \ref{theorem:defect_adjunction}
\[
\mathcal{G} \rightarrow \Hom(\defect( \mathcal{G} ), - )
\]
is the observable quotient of $\mathcal{G} \in R\modl\modl$.
In particular, the observable quotient is again an object in $R\modl\modl$ and as such, it is finitely observable.
\end{theorem}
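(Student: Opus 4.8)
The plan is to prove this by dualizing the argument for \ref{theorem:controllable_part_rmodmod}; conceptually the two statements are interchanged by the Auslander--Gruson--Jensen duality (\ref{theorem:agj_duality}), which swaps $R\modl\modl$ with $((\modr R)\modl)^{\op}$, exchanges $\defect$ with $\codefect$ and hom functors with tensor functors, turns the counit of \ref{theorem:codefect_adjunction} into the unit of \ref{theorem:defect_adjunction}, and converts ``sum of images of maps $\ff \to \mathcal{G}$'' into ``intersection of kernels of maps $\mathcal{G} \to \ff$''. Rather than invoke this formally, I would spell out the dual argument directly. Write $\eta_{\mathcal{G}} \colon \mathcal{G} \to \Hom(\defect(\mathcal{G}),-)$ for the unit. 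The unobservable part of $\mathcal{G}$ is the subobject $U := \bigcap_{\alpha \colon \mathcal{G} \to \ff} \kernel(\alpha)$ (evaluating at the distinguished module $M$ with $\Beh{R}{M} \simeq R\modl\modl$ recovers the group of unobservable trajectories). It thus suffices to establish $\kernel(\eta_{\mathcal{G}}) = U$, for then the coimage $\mathcal{G}/\kernel(\eta_{\mathcal{G}})$ is precisely the observable quotient $\mathcal{G}/U$.

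For the inclusion $\kernel(\eta_{\mathcal{G}}) \subseteq U$, I would show that every morphism $\alpha \colon \mathcal{G} \to \ff$ factors through $\eta_{\mathcal{G}}$. Since $\ff \simeq \Hom({_R}R,-) = \yoneda({_R}R)$, the hom-set bijection of the adjunction $\defect \dashv \yoneda$ (\ref{theorem:defect_adjunction}) supplies a morphism $\beta \colon {_R}R \to \defect(\mathcal{G})$ in $R\modl$ with
\[
\alpha = \big( \mathcal{G} \xrightarrow{\eta_{\mathcal{G}}} \Hom(\defect(\mathcal{G}),-) \xrightarrow{\Hom(\beta,-)} \Hom({_R}R,-) = \ff \big).
\]
Hence $\kernel(\eta_{\mathcal{G}}) \subseteq \kernel(\alpha)$ for every $\alpha$, and therefore $\kernel(\eta_{\mathcal{G}}) \subseteq U$.

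For the reverse inclusion I would use left coherence: by \ref{para:coherent_rings} the category $R\modl$ is abelian, so $\defect(\mathcal{G})$ is finitely presented (\ref{example:defect}), in particular finitely generated. Choosing an epimorphism $p \colon R^{1 \times n} \twoheadrightarrow \defect(\mathcal{G})$ and applying the contravariant hom functor (left exact in its first argument, hence carrying the epimorphism $p$ to a monomorphism) yields a monomorphism $\Hom(p,-) \colon \Hom(\defect(\mathcal{G}),-) \hookrightarrow \Hom(R^{1\times n},-) \cong \ff^{\oplus n}$ in $R\modl\modl$. Composing with $\eta_{\mathcal{G}}$ gives a morphism $\mathcal{G} \to \ff^{\oplus n}$ whose components are finitely many maps $\alpha_1, \dots, \alpha_n \colon \mathcal{G} \to \ff$; since $\Hom(p,-)$ is a monomorphism we obtain
\[
\kernel(\eta_{\mathcal{G}}) = \kernel\big(\mathcal{G} \to \ff^{\oplus n}\big) = \bigcap_{i=1}^{n} \kernel(\alpha_i) \supseteq U.
\]
Together with the first inclusion this gives $\kernel(\eta_{\mathcal{G}}) = U$, so the coimage of $\eta_{\mathcal{G}}$ is the observable quotient; moreover the mono--epi factorization of $\eta_{\mathcal{G}}$ embeds this coimage into $\ff^{\oplus n}$, exhibiting finite observability.

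The main obstacle is the reverse inclusion. The subtle point is that $U$ is defined as an a priori infinite intersection $\bigcap_{\alpha}\kernel(\alpha)$, which need not exist as a subobject of $\mathcal{G}$ in $R\modl\modl$ (this category is abelian but not obviously complete). The content of the theorem is precisely that the intersection collapses to the finite one $\bigcap_{i=1}^{n}\kernel(\alpha_i) = \kernel(\eta_{\mathcal{G}})$, and this collapse depends on $\defect(\mathcal{G})$ being \emph{finitely} generated --- exactly what left coherence of $R$ provides. A secondary, routine matter is to confirm that evaluation at the distinguished faithful exact module carries the categorical identity $\kernel(\eta_{\mathcal{G}}) = U$ to the stated description in terms of unobservable trajectories.
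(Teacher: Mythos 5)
Your proof is correct and takes essentially the same route as the paper's: both use the adjunction $\defect \dashv \yoneda$ to factor every morphism $\mathcal{G} \rightarrow \ff$ through the unit (giving $\kernel(\eta_{\mathcal{G}})$ contained in the unobservable part), and both use left coherence to get an epimorphism $R^{1 \times n} \twoheadrightarrow \defect(\mathcal{G})$ whose associated monomorphism $\Hom(\defect(\mathcal{G}),-) \hookrightarrow \ff^{\oplus n}$ shows the kernel of the unit is cut out by finitely many maps to $\ff$, hence equals the unobservable part. Your extra remarks on the collapse of the a priori infinite intersection and on the embedding of the coimage into $\ff^{\oplus n}$ witnessing finite observability simply make explicit what the paper leaves implicit.
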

\begin{proof}
Let $\alpha: \mathcal{G} \rightarrow \ff$ be a morphism.
Since $\ff \simeq \Hom(R, -)$, the adjunction in \ref{theorem:defect_adjunction} implies that there exists a morphism $\beta: R \rightarrow \defect( \mathcal{G} )$ in $R\modl$ such that the following diagram commutes:
\begin{center}
       \begin{tikzpicture}[mylabel/.style={fill=white},baseline=($(A) + (d)$)]
        \coordinate (r) at (4,0);
        \coordinate (d) at (0,-1);
        \node (A) {$\Hom(\defect( \mathcal{G} ), - )$};
        \node (B) at ($(A)+(r)+(d)$) {$\mathcal{G}$};
        \node (C) at ($(A)+2*(d)$) {$\Hom(R, -)$};
        \draw[<-,thick] (A) to (B);
        \draw[<-,thick] (C) to node[below]{$\alpha$}(B);
        \draw[<-,thick] (C) to node[left]{$\Hom(\beta, -)$}(A);
        \end{tikzpicture}
\end{center}
It follows that the kernel of the counit lies in the kernel of every possible $\alpha$ of type $\mathcal{G} \rightarrow \ff$, i.e., it can only contain unobservable trajectories.

Conversely, since $\defect(\mathcal{G})$ is finitely presented, there is an $n \in \Nzero$ and an epi in $R\modl$ of the form
\[
R^{1 \times n} \rightarrow \defect(\mathcal{G}).
\]
The corresponding natural transformation
\[
\Hom(\defect(\mathcal{G}), -) \rightarrow \Hom(R^{1 \times n}, - )
\]
is a mono in $R\modl\modl$ since hom functors are left exact. It follows that the kernel of the unit exactly consists of the unobservable trajectories.
The claim follows.
\end{proof}

\begin{remark}
Since the canonical quotient functor
\[
R\modl\modl \rightarrow \Beh{R}{M}
\]
is exact for all $M \in R\Modl$, the controllable part $\mathcal{H}$ of an object $\mathcal{G} \in R\modl\modl$ is mapped to a subobject
\[
\asBeh{\mathcal{H}}{M} \hookrightarrow \asBeh{\mathcal{G}}{M}
\]
in $\Beh{R}{M}$ whose underlying abelian group is a subgroup of the controllable part of $\asBeh{\mathcal{G}}{M}$.
Dually, the observable quotient $\mathcal{H}$ of an object $\mathcal{G} \in R\modl\modl$ is mapped to a quotient object
\[
\asBeh{\mathcal{H}}{M} \twoheadleftarrow \asBeh{\mathcal{G}}{M}
\]
in $\Beh{R}{M}$ whose underlying abelian group is a quotient of the observable quotient of $\asBeh{\mathcal{G}}{M}$.
\end{remark}

\begin{theorem}\label{theorem:computation_of_controllable_part}
Let $R$ be a right coherent ring.
Let $M \in R\Modl$, $\asBeh{\mathcal{G}}{M} \in \Beh{R}{M}$ and let
\[
R\modl\modl \rightarrow \Beh{R}{M}
\]
be the canonical quotient functor.
Assume that the canonical quotient functor admits a right adjoint 
\[
    F: \Beh{R}{M} \rightarrow R\modl\modl.
\]
If $\mathcal{H} \hookrightarrow F(\asBeh{\mathcal{G}}{M}) \in R\modl\modl$ is the controllable part of $F(\asBeh{\mathcal{G}}{M})$, then the image of the composite morphism
\begin{equation}\label{equation:mapped_controllable_part}
    \asBeh{\mathcal{H}}{M} \hookrightarrow \asBeh{F(\asBeh{\mathcal{G}}{M})}{M} \rightarrow \asBeh{\mathcal{G}}{M}
\end{equation}
is the controllable part of $\asBeh{\mathcal{G}}{M}$, where the second morphism in the composition is the counit of the adjunction.
\end{theorem}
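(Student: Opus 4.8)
The plan is to test everything through the faithful exact functor $S\colon \Beh{R}{M}\to\Ab$, $\asBeh{\mathcal{G}}{M}\mapsto\mathcal{G}(M)$ of \ref{remark:morphisms_of_beh}, so that both the controllable part (an abelian subgroup of $\mathcal{G}(M)$) and the underlying group of the image of \eqref{equation:mapped_controllable_part} become subgroups of $\mathcal{G}(M)$ that can be compared directly; since $S$ is exact it sends images to images and subobjects to subgroups. Write $Q\colon R\modl\modl\to\Beh{R}{M}$ for the canonical quotient functor (so $Q(\mathcal{G})=\asBeh{\mathcal{G}}{M}$ and $Q(\ff)=\asBeh{\ff}{M}$), let $\epsilon\colon QF\to\id$ be the counit of $Q\dashv F$, and recall that $Q$ is exact and hence preserves images. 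The decisive input is the adjunction bijection, which gives $\Hom_{\Beh{R}{M}}(\asBeh{\ff}{M},\asBeh{\mathcal{G}}{M})\cong\Hom_{R\modl\modl}(\ff,F(\asBeh{\mathcal{G}}{M}))$, sending $\tilde\alpha$ to $\alpha=\epsilon_{\asBeh{\mathcal{G}}{M}}\circ Q(\tilde\alpha)$; in particular every morphism $\alpha\colon\asBeh{\ff}{M}\to\asBeh{\mathcal{G}}{M}$ arises in this way.

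For the inclusion of the controllable part into the image of \eqref{equation:mapped_controllable_part}, I take a controllable trajectory $t=\alpha(s)$ with $\alpha\colon\asBeh{\ff}{M}\to\asBeh{\mathcal{G}}{M}$ and $s\in M$. Writing $\alpha=\epsilon_{\asBeh{\mathcal{G}}{M}}\circ Q(\tilde\alpha)$ as above, I note that $\tilde\alpha\colon\ff\to F(\asBeh{\mathcal{G}}{M})$ factors through its image, which lies inside $\mathcal{H}$ because, by \ref{theorem:controllable_part_rmodmod}, the controllable part $\mathcal{H}$ of $F(\asBeh{\mathcal{G}}{M})$ is exactly the union of the images of all morphisms $\ff\to F(\asBeh{\mathcal{G}}{M})$. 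Hence $\tilde\alpha$ factors as $\ff\xrightarrow{\tilde\alpha'}\mathcal{H}\xhookrightarrow{\iota}F(\asBeh{\mathcal{G}}{M})$; applying $Q$ and postcomposing with $\epsilon$ shows $\alpha=\big(\epsilon_{\asBeh{\mathcal{G}}{M}}\circ Q(\iota)\big)\circ Q(\tilde\alpha')$, so $\alpha$ factors through the composite $\asBeh{\mathcal{H}}{M}\hookrightarrow\asBeh{F(\asBeh{\mathcal{G}}{M})}{M}\xrightarrow{\epsilon}\asBeh{\mathcal{G}}{M}$ of \eqref{equation:mapped_controllable_part}. Therefore $t=\alpha(s)$ lies in the underlying group of the image of \eqref{equation:mapped_controllable_part}.

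For the reverse inclusion, I use that by \ref{theorem:controllable_part_rmodmod} the controllable part $\mathcal{H}$ is finitely controllable, so there is an epimorphism $\ff^{\oplus n}\twoheadrightarrow\mathcal{H}$ in $R\modl\modl$. Since $Q$ is exact, this yields an epimorphism $\asBeh{\ff}{M}^{\oplus n}\twoheadrightarrow\asBeh{\mathcal{H}}{M}$, so the image of \eqref{equation:mapped_controllable_part} agrees with the image of the composite $\asBeh{\ff}{M}^{\oplus n}\to\asBeh{\mathcal{G}}{M}$ obtained by further composing with $\epsilon$. On underlying groups this composite sends $(s_1,\dots,s_n)$ to $\sum_i\beta_i(s_i)$, where each component $\beta_i\colon\asBeh{\ff}{M}\to\asBeh{\mathcal{G}}{M}$ produces controllable trajectories; as the controllable trajectories form a subgroup, every element of this image is controllable. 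This proves that the image is contained in the controllable part, and combined with the previous paragraph gives equality. In particular the image of \eqref{equation:mapped_controllable_part}, being a genuine subobject of $\asBeh{\mathcal{G}}{M}$, realizes the controllable part as a subobject in $\Beh{R}{M}$.

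The only real obstacle is the adjunction bookkeeping: one must invoke the triangle-identity formula $\alpha=\epsilon\circ Q(\tilde\alpha)$ precisely, so that the morphism $\alpha$ out of the abstract signal space is literally routed through the inclusion $\asBeh{\mathcal{H}}{M}\hookrightarrow\asBeh{F(\asBeh{\mathcal{G}}{M})}{M}$ occurring in \eqref{equation:mapped_controllable_part}, rather than through some unrelated factorization. Everything else reduces to the exactness of $Q$ and of $S$ (so that images and subobjects are preserved and may be tested on underlying abelian groups) together with the description of the controllable part in $R\modl\modl$ already established in \ref{theorem:controllable_part_rmodmod}.
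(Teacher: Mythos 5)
Your proof is correct and follows essentially the same route as the paper: both directions rest on Theorem \ref{theorem:controllable_part_rmodmod} (factoring any $\ff \rightarrow F(\asBeh{\mathcal{G}}{M})$ through $\mathcal{H}$, plus finite controllability of $\mathcal{H}$), the adjunction formula $\alpha = \epsilon \circ Q(\alpha^{\sharp})$, and exactness of the quotient functor. You merely present the two inclusions in the opposite order and spell out explicitly (via the epimorphism $\asBeh{\ff}{M}^{\oplus n} \twoheadrightarrow \asBeh{\mathcal{H}}{M}$ and the subgroup property of controllable trajectories) what the paper leaves implicit.
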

\begin{proof}
Since $R$ is right coherent, we can apply \ref{theorem:controllable_part_rmodmod} and deduce that $\mathcal{H}$ is finitely controllable.
It follows that the image of the morphism in \eqref{equation:mapped_controllable_part} can only consist of controllable trajectories. Thus, it suffices to show that conversely, any morphism
\[
\alpha: \asBeh{\ff}{M} \rightarrow \asBeh{\mathcal{G}}{M}
\]
factors over the morphism in \eqref{equation:mapped_controllable_part}.
Let
\[
\alpha^{\sharp}: \ff \rightarrow F(\asBeh{\mathcal{G}}{M})
\]
be the morphism corresponding to $\alpha$ via the given adjunction. Then $\alpha^{\sharp}$ factors over the controllable part $\mathcal{H} \hookrightarrow F(\asBeh{\mathcal{G}}{M})$.
Next, we consider the following diagram:
\begin{center}
       \begin{tikzpicture}[mylabel/.style={fill=white},baseline=($(A) + (d)$)]
        \coordinate (r) at (4,0);
        \coordinate (d) at (0,-1);
        \node (A) {$\asBeh{\ff}{M}$};
        \node (B) at ($(A)+(r)$) {$\asBeh{F(\asBeh{\mathcal{G}}{M})}{M}$};
        \node (C) at ($(A)+2*(r)$) {$\asBeh{\mathcal{G}}{M}$};
        \node (D) at ($(A)+(d)+(r)$) {$\asBeh{\mathcal{H}}{M}$};
        \draw[->,thick] (A) to node[above]{$\alpha^{\sharp}_{M}$} (B);
        \draw[->,thick] (B) to (C);
        \draw[->,thick] (A) to (D);
        \draw[right hook ->,thick] (D) to (B);
        \draw[->,thick,out=25,in=180-25] (A) to node[below]{$\alpha$}(C);
        \end{tikzpicture}
\end{center}
The lower triangle commutes since it is 
the application of the canonical quotient functor to the factorization of $\alpha^{\sharp}$, and the upper part commutes by the given adjunction.
The claim follows.
\end{proof}

\begin{theorem}
Let $R$ be a left coherent ring.
Let $M \in R\Modl$, $\asBeh{\mathcal{G}}{M} \in \Beh{R}{M}$ and let
\[
R\modl\modl \rightarrow \Beh{R}{M}
\]
be the canonical quotient functor.
Assume that the canonical quotient functor admits a left adjoint 
\[
    F: \Beh{R}{M} \rightarrow R\modl\modl.
\]
If $\mathcal{H} \twoheadleftarrow F(\asBeh{\mathcal{G}}{M}) \in R\modl\modl$ is the observable quotient of $F(\asBeh{\mathcal{G}}{M})$, then the coimage (i.e., the domain modulo the kernel) of the composite morphism
\begin{equation}
    \asBeh{\mathcal{H}}{M} \twoheadleftarrow \asBeh{F(\asBeh{\mathcal{G}}{M})}{M} \leftarrow \asBeh{\mathcal{G}}{M}
\end{equation}
is the observable quotient of $\asBeh{\mathcal{G}}{M}$, where the first morphism in the composition is the unit of the adjunction.
\end{theorem}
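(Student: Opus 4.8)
The plan is to dualize the proof of \ref{theorem:computation_of_controllable_part} line by line, trading the right adjoint for the left adjoint, the counit for the unit, images for kernels, and epimorphisms for monomorphisms. Write $Q \colon R\modl\modl \rightarrow \Beh{R}{M}$ for the canonical quotient functor, let $\eta \colon \id \rightarrow Q \circ F$ be the unit of the adjunction $F \dashv Q$, and let $p \colon F(\asBeh{\mathcal{G}}{M}) \twoheadrightarrow \mathcal{H}$ denote the observable quotient map in $R\modl\modl$. Then the composite in the statement is $Q(p) \circ \eta_{\asBeh{\mathcal{G}}{M}} \colon \asBeh{\mathcal{G}}{M} \rightarrow \asBeh{\mathcal{H}}{M}$, and the task is to show that, after evaluation at $M$, its kernel coincides with the subgroup of unobservable trajectories of $\asBeh{\mathcal{G}}{M}$; the claim that the coimage is the observable quotient then follows immediately.

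For the inclusion of the unobservable trajectories into the kernel, I would apply the observable-quotient analogue of \ref{theorem:controllable_part_rmodmod}, which, since $R$ is left coherent, guarantees that $\mathcal{H}$ is finitely observable. Hence there is a monomorphism $\mathcal{H} \hookrightarrow \ff^{\oplus n}$ in $R\modl\modl$, and applying the exact functor $Q$ yields a monomorphism $\asBeh{\mathcal{H}}{M} \hookrightarrow (\asBeh{\ff}{M})^{\oplus n}$. The composite $\asBeh{\mathcal{G}}{M} \rightarrow \asBeh{\mathcal{H}}{M}$ therefore has the same kernel as the induced map into $(\asBeh{\ff}{M})^{\oplus n}$, whose $n$ components are observing morphisms $\asBeh{\mathcal{G}}{M} \rightarrow \asBeh{\ff}{M}$. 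Every unobservable trajectory is annihilated by all observing morphisms, hence lies in this common kernel, which gives the desired inclusion.

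For the reverse inclusion I would show, using the adjunction, that every observing morphism factors through the composite. Since $\asBeh{\ff}{M} = Q(\ff)$, the adjunction turns a morphism $\alpha \colon \asBeh{\mathcal{G}}{M} \rightarrow \asBeh{\ff}{M}$ into a morphism $\alpha^{\flat} \colon F(\asBeh{\mathcal{G}}{M}) \rightarrow \ff$ in $R\modl\modl$ with $\alpha = Q(\alpha^{\flat}) \circ \eta_{\asBeh{\mathcal{G}}{M}}$. By the defining property of the observable quotient, $\alpha^{\flat}$ factors as $\beta \circ p$ for some $\beta \colon \mathcal{H} \rightarrow \ff$, so applying $Q$ gives $\alpha = Q(\beta) \circ \big(Q(p) \circ \eta_{\asBeh{\mathcal{G}}{M}}\big)$; that is, $\alpha$ factors through the composite. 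Consequently any trajectory lying in the kernel of the composite is killed by every observing morphism $\alpha$ and is therefore unobservable. The supporting diagram is the mirror image of the one in \ref{theorem:computation_of_controllable_part}: its lower triangle is the image under $Q$ of the factorization of $\alpha^{\flat}$ through $\mathcal{H}$, and its upper triangle records the unit of the adjunction.

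Combining the two inclusions identifies the kernel of the composite with the unobservable trajectories, whence the coimage is exactly the observable quotient of $\asBeh{\mathcal{G}}{M}$. I do not anticipate a genuine obstacle, since the whole argument is formally dual to \ref{theorem:computation_of_controllable_part}; the only points demanding care are keeping the variances straight — here the unit of $F \dashv Q$ plays the role that the counit of $Q \dashv F$ played before, and one tracks kernels rather than images — and checking that the factorization property of the observable quotient dualizes correctly from the controllable setting.
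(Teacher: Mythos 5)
Your proposal is correct and takes essentially the same approach as the paper: the paper's own proof consists of the single remark that the argument is analogous (dual) to \ref{theorem:computation_of_controllable_part}, and your write-up carries out exactly that dualization. In particular, using finite observability of $\mathcal{H}$ (the dual of \ref{theorem:controllable_part_rmodmod}, valid since $R$ is left coherent) to show unobservable trajectories lie in the kernel, and using the adjunction $F \dashv Q$ to factor every observing morphism through the composite, mirrors the paper's two steps precisely.
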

\begin{proof}
Analogous to \ref{theorem:computation_of_controllable_part}.
\end{proof}

\begin{ex}\label{example:controllable_and_ext1}
Let $R$ be a left and right coherent ring, and let $M \in R\Modl$ be an fp-injective fp-cogenerator.
From \ref{corollary:beh_rmodop_equivalence} it follows that
\[
\Beh{R}{M} \simeq (R\modl)^{\op}.
\]
A module $N \in R\modl$ corresponds via this equivalence to the abstract behavior $\asBeh{\Hom(N,-)}{M} \in \Beh{R}{M}$.
We want to compute its controllable part.
Since we have the adjunction $(\yoneda \vdash \defect)$ by \ref{theorem:defect_adjunction} and since $R$ is right coherent, we can apply \ref{theorem:computation_of_controllable_part}, which means we can compute the controllable part of $\Hom(N,-) \in R\modl\modl$ and map it down to $(R\modl)^{\op}$ via $\defect$.
By \ref{theorem:controllable_part_rmodmod}, the controllable part of $\Hom(N,-)$ is given by the image of a natural transformation of type
\begin{equation}\label{equation:covdefect_to_hom}
\big(\codefect( {\Hom(N,-)} ) \otimes - \big) \rightarrow {\Hom(N,-)}.
\end{equation}
By \ref{example:covariant_defect}, we can compute
\[
\codefect( {\Hom(N,-)} ) \cong \Hom(N,R) \in \modr R.
\]
Now, let
\begin{center}
  \begin{tikzpicture}[mylabel/.style={fill=white}]
      \coordinate (r) at (3,0);
      
      \node (A) {$R^{1\times m}$};
      \node (B) at ($(A)+(r)$) {$R^{1 \times n}$};
      \node (C) at ($(B) + (r)$) {$N$};
      \node (D) at ($(C) + (r)$) {$0$};
      
      \draw[->,thick] (A) to node[above]{$A$} (B);
      \draw[->,thick] (B) to (C);
      \draw[->,thick] (C) to (D);
  \end{tikzpicture}
\end{center}
be a presentation of $N$.
From this, we get an exact sequence in $\modr R$
\begin{center}
  \begin{tikzpicture}[mylabel/.style={fill=white}]
      \coordinate (r) at (2,0);
      
      \node (A) {$0$};
      \node (B) at ($(A)+(r)$) {$N'$};
      \node (C) at ($(B) + (r)$) {$R^{m \times 1}$};
      \node (D) at ($(C) + (r)$) {$R^{n \times 1}$};
      \node (E) at ($(D) + (r)$) {$\Hom(N,R)$};
      \node (F) at ($(E) + (r)$) {$0$};

      \draw[->,thick] (F) to (E);
      \draw[->,thick] (E) to (D);
      \draw[->,thick] (D) to node[above]{$A$} (C);
      \draw[->,thick] (C) to (B);
      \draw[->,thick] (B) to (A);
  \end{tikzpicture}
\end{center}
where $N' \in \modr R$ denotes the so-called \textbf{Auslander transpose} of $N$.
Let $B \in R^{n \times p}$, $C \in R^{p \times q}$ be matrices for $p,q \in \Nzero$ such that
\begin{center}
  \begin{tikzpicture}[mylabel/.style={fill=white}]
      \coordinate (r) at (2,0);
      
      \node (A) {$0$};
      \node (B) at ($(A)+(r)$) {$N'$};
      \node (C) at ($(B) + (r)$) {$R^{m \times 1}$};
      \node (D) at ($(C) + (r)$) {$R^{n \times 1}$};
      \node (E) at ($(D) + (r)$) {$R^{p \times 1}$};
      \node (F) at ($(E) + (r)$) {$R^{q \times 1}$};

      \draw[->,thick] (F) to node[above]{$C$} (E);
      \draw[->,thick] (E) to node[above]{$B$} (D);
      \draw[->,thick] (D) to node[above]{$A$} (C);
      \draw[->,thick] (C) to (B);
      \draw[->,thick] (B) to (A);
  \end{tikzpicture}
\end{center}
is exact. It follows that $C$ yields a presentation of $\Hom(N,R)$ and consequently, we have a presentation
\begin{center}
  \begin{tikzpicture}[mylabel/.style={fill=white}]
      \coordinate (r) at (3,0);
      
      \node (A) {$\ff^{\oplus q}$};
      \node (B) at ($(A)+(r)$) {$\ff^{\oplus p}$};
      \node (C) at ($(B) + (r)$) {$(\Hom(N,R) \otimes -)$};
      \node (D) at ($(C) + (r)$) {$0$};
      
      \draw[->,thick] (A) to node[above]{$C$} (B);
      \draw[->,thick] (B) to (C);
      \draw[->,thick] (C) to (D);
  \end{tikzpicture}
\end{center}
in $R\modl\modl$.
By \ref{example:defect}, we can compute
\[
\defect( (\Hom(N,R) \otimes -) ) \cong \kernel( R^{1 \times p} \xrightarrow{C} R^{1 \times q} )
\]
and the right hand side readily identifies with
\[
\kernel( R^{1 \times p} \xrightarrow{C} R^{1 \times q} ) \cong  \Hom(\Hom(N,R),R).
\]
It follows that the application of $\defect$ to \eqref{equation:covdefect_to_hom} yields the natural map of type
\[
N \rightarrow \Hom(\Hom(N,R),R).
\]
It follows that the coimage of this natural map corresponds to the controllable part of $\asBeh{\Hom(N,-)}{M}$.
In particular, $\asBeh{\Hom(N,-)}{M}$ is controllable if this natural map is injective.
We note that due to Auslander \cite[Proposition 6.3]{A} we have an exact sequence
\begin{center}
  \begin{tikzpicture}[mylabel/.style={fill=white}]
      \coordinate (r) at (3,0);
      
      \node (A) {$0$};
      \node (B) at ($(A)+0.5*(r)$) {$\Ext^1(N',R)$};
      \node (C) at ($(B) + 0.75*(r)$) {$N$};
      \node (D) at ($(C) + (r)$) {$\Hom(\Hom(N,R),R)$};
      \node (E) at ($(D) + (r)$) {$\Ext^2(N',R)$};
      \node (F) at ($(E) + 0.5*(r)$) {$0$};
      
      \draw[->,thick] (A) to (B);
      \draw[->,thick] (B) to (C);
      \draw[->,thick] (C) to (D);
      \draw[->,thick] (D) to (E);
      \draw[->,thick] (E) to (F);
  \end{tikzpicture}
\end{center}
which shows that we have controllability if and only if
\[
\Ext^1(N',R) \cong 0.
\]
But this is exactly the test for controllability introduced by Pommaret and Quadrat in \cite{PommaretQuadrat} in the context of noetherian left Ore integral domains.
\end{ex}

\section*{Appendix}
\stepcounter{section}
\addcontentsline{toc}{section}{Appendix}
In this appendix, we recall some elementary aspects of subobjects, quotient objects, and their interaction with functors.

\subsection*{Subobjects and quotient objects}

\begin{definition}
    Let $C$ be an object in a category $\CC$.
    Then we can form its \textbf{category of subobjects} $\Sub( C )$ as follows:
    an object in $\Sub( C )$ is given by a monomorphism in $\CC$ whose range is given by $C$.
    A morphism from
    $\alpha: A \hookrightarrow C$
    to
    $\beta: B \hookrightarrow C$
    in $\Sub( C )$ is given by a morphism $\gamma: A \rightarrow B$ such that the following diagram commutes:
    \begin{equation}\label{equation:subobj_triangle}
        \begin{tikzpicture}[mylabel/.style={fill=white}, baseline=($(A) + 0.5*(u)$)]
            \coordinate (r) at (2,0);
            \coordinate (u) at (0,2);
            \node (A) {$A$};
            \node (B) at ($(A)+2*(r)$) {$B$};
            \node (C) at ($(A)+(r)+(u)$) {$C$};
            \draw[->,thick] (A) to node[above]{$\gamma$} (B);
            \draw[right hook->,thick] (A) to node[above, xshift = -0.5em]{$\alpha$} (C);
            \draw[left hook->,thick] (B) to node[above, xshift = 0.5em]{$\beta$} (C);
        \end{tikzpicture}
    \end{equation}
    If there exists such a morphism $\gamma$, we say that the object defined by $\alpha$ \textbf{is included} in the object defined by $\beta$.
    Moreover, if two monos $\alpha$ and $\beta$ considered as objects in $\Sub( C )$ are isomorphic,
    then we say that they are \textbf{equal as subobjects}.
    
    Dually, we can form the category of \textbf{quotient objects} (also called \textbf{factor objects}) $\Quo( C )$ whose objects are epimorphisms
    with $C$ as their source. The corresponding dual picture looks as follows:
    \begin{center}
        \begin{tikzpicture}[mylabel/.style={fill=white}]
            \coordinate (r) at (2,0);
            \coordinate (u) at (0,2);
            \node (A) {$A$};
            \node (B) at ($(A)+2*(r)$) {$B$};
            \node (C) at ($(A)+(r)+(u)$) {$C$};
            \draw[->,thick] (B) to node[above]{$\gamma$} (A);
            \draw[->>,thick] (C) to node[above, xshift = -0.5em]{$\alpha$} (A);
            \draw[->>,thick] (C) to node[above, xshift = 0.5em]{$\beta$} (B);
        \end{tikzpicture}
    \end{center}
    Again, if two epis $\alpha$ and $\beta$ considered as objects in $\Quo( C )$ are isomorphic,
    then we say that they are \textbf{equal as quotient objects} (or alternatively \textbf{equal as factor objects}).
\end{definition}

\begin{lemma}\label{lemma:sub_and_quo}
    Let $C$ be an object in a category $\CC$.
    \begin{enumerate}
        \item The morphism sets in both $\Sub( C )$ and $\Quo( C )$ are either empty or singletons. Thus, both categories may be viewed as classes equipped with a preorder, i.e., a reflexive and transitive relation.
        \item If $\gamma \in \Hom_{\Sub(C)}( \alpha, \beta )$, then $\gamma$ is a mono in $\CC$.
        \item If $\gamma \in \Hom_{\Quo(C)}( \beta, \alpha )$, then $\gamma$ is an epi in $\CC$.
    \end{enumerate}
\end{lemma}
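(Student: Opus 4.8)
The plan is to prove all three statements by elementary cancellation arguments that use nothing beyond the definitions of monomorphism and epimorphism, and to obtain the $\Quo(C)$-statements as formal duals of the $\Sub(C)$-statements.

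First I would prove part (2). Given $\gamma \in \Hom_{\Sub(C)}(\alpha,\beta)$ with $\alpha: A \hookrightarrow C$ and $\beta: B \hookrightarrow C$, the commutative triangle in \eqref{equation:subobj_triangle} reads $\beta \circ \gamma = \alpha$. To see that $\gamma$ is a mono, suppose $u,v$ are morphisms of $\CC$ with $\gamma \circ u = \gamma \circ v$. Composing with $\beta$ on the left yields $\alpha \circ u = \beta \circ \gamma \circ u = \beta \circ \gamma \circ v = \alpha \circ v$, and since $\alpha$ is a mono (being an object of $\Sub(C)$), we conclude $u = v$. Hence $\gamma$ is a mono. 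Part (3) is the exact dual: for $\gamma \in \Hom_{\Quo(C)}(\beta,\alpha)$ the defining triangle is underlain by a morphism $\gamma: B \to A$ satisfying $\gamma \circ \beta = \alpha$ with $\alpha$ an epi, and the symmetric right-cancellation argument shows $\gamma$ is an epi.

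Next I would establish part (1). For uniqueness of morphisms in $\Sub(C)$: if $\gamma, \gamma' \in \Hom_{\Sub(C)}(\alpha,\beta)$, then $\beta \circ \gamma = \alpha = \beta \circ \gamma'$, and since $\beta$ is a mono, $\gamma = \gamma'$; thus each hom-set is empty or a singleton. Reflexivity of the resulting relation is witnessed by the identity $\id_A$, which always lies in $\Hom_{\Sub(C)}(\alpha,\alpha)$, and transitivity is just the composability of morphisms in the category $\Sub(C)$. Hence $\Sub(C)$ is a preorder, and dually $\Quo(C)$ is a preorder, this time using that the relevant structure morphisms are epis and cancel on the right.

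I do not expect a genuine obstacle here; the proof is pure diagram chasing. The only point requiring care is tracking the variance when dualizing to $\Quo(C)$, namely that a $\Quo(C)$-morphism from $\beta$ to $\alpha$ is underlain by a morphism $\gamma: B \to A$ with $\gamma \circ \beta = \alpha$, so that the relevant cancellation is right-cancellation by the epi $\alpha$ rather than left-cancellation by a mono.
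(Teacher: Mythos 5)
Your proposal is correct and follows essentially the same argument as the paper: uniqueness of morphisms in $\Sub(C)$ by left-cancellation with the mono $\beta$, the mono property of $\gamma$ by composing with $\beta$ and cancelling the mono $\alpha$, and the $\Quo(C)$ statements by duality. The only (harmless) additions are your explicit remarks on reflexivity and transitivity, which the paper leaves implicit.
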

\begin{proof}
Let $\alpha, \beta, \gamma$ be the morphisms of the commutative triangle in \eqref{equation:subobj_triangle}.
Suppose given another morphism $\delta: A \rightarrow B$ such that $\beta \circ \gamma = \alpha = \beta \circ \delta$. Since $\beta$ is a mono, it follows that $\delta = \gamma$.
We can proceed dually for quotient objects.
For the second statement: Let $\tau_1, \tau_2: T \rightarrow A$ be two morphisms such that $\gamma \circ \tau_1 = \gamma \circ \tau_2$.
Then $\beta \circ \gamma \circ \tau_1 = \beta \circ \gamma \circ \tau_2$ and hence $\alpha \circ \tau_1 = \alpha \circ \tau_2$. Since $\alpha$ is a mono, we have $\tau_1 = \tau_2$ and thus $\gamma$ is a mono.
The third statement is dual to the second.
\end{proof}

\begin{remark}
More classically, subobjects are introduced as the isomorphism classes of the objects in $\Sub( \CC )$.
However, whenever we work concretely with subobjects in category theory, we often make use of a concrete monomorphism which represents the subobject, which is why the introduction of subobjects by means of the category $\Sub( \CC )$ appears to be more suitable.
See also \cite[Definition 4.6.8]{RiehlContext} for a modern introduction to category theory where subobjects are also directly treated by their defining monomorphisms.
\end{remark}

\subsection*{Functors on subobjects}

\begin{para}\label{remark:functor_on_subobjects}
If a functor $F: \CC \rightarrow \DC$ respects monomorphisms, then it induces a functor on the level of subobjects for every $C \in \AC$:
\begin{align*}
\Sub( C ) &\rightarrow \Sub( F(C) ) \\
(A \hookrightarrow C) &\mapsto (F(A) \hookrightarrow F(C))
\end{align*}
In particular, this applies to left exact functors between abelian categories.
\end{para}

\begin{para}[Test for subobject inclusion]\label{remark:test_for_subobjects}
Let $\AC$ be an additive category such that every subobject $(\alpha: A \hookrightarrow B) \in \Sub(B)$ for $B \in \AC$ arises as the kernel of some morphism $\delta: B \rightarrow D$ in $\AC$.
This holds for example in abelian categories.
Such a morphism $\delta$ yields an elementary test whether another subobject $(\beta: C \hookrightarrow B)$
is included in $(\alpha: A \hookrightarrow B)$: this is the case if and only if
\begin{equation}\label{equation:test_subobject_inclusion}
\delta \circ \beta = 0.
\end{equation}
This follows directly from the universal property of kernels. More generally, if $\beta: C \rightarrow B$ is any morphism, then there exists a $\gamma: C \rightarrow A$ that renders the triangle

\begin{center}
\begin{tikzpicture}[baseline=(base)]
        \coordinate (r) at (1.5,0);
        \coordinate (d) at (0,-1);
        \node (A) {$A$};
        \node (B) at ($(A)+(r)-(d)$) {$B$};
        \node (C) at ($(A) + 2*(r)$) {$C$};
        \node (base) at ($(A) + 0.5*(d)$) {};
        \draw[<-right hook,thick] (B) to node[above]{$\alpha$} (A);
        \draw[<-,thick] (B) to node[above]{$\beta$} (C);
        \draw[->,thick] (C) to node[above]{$\gamma$} (A);
\end{tikzpicture}
\end{center}
commutative if and only if the Equation in \eqref{equation:test_subobject_inclusion} holds.
\end{para}

\begin{lemma}\label{lemma:left_exact_faithful_yields_triangles}
Let $\AC$ be a category as in \ref{remark:test_for_subobjects}.
Let $F: \AC \rightarrow \BC$ be a faithful functor that respects kernels.
Suppose given the following diagram in $\AC$ with $\alpha$ a mono:
\begin{center}
\begin{tikzpicture}[baseline=(base)]
        \coordinate (r) at (1.5,0);
        \coordinate (d) at (0,-1);
        \node (A) {$A$};
        \node (B) at ($(A)+(r)-(d)$) {$B$};
        \node (C) at ($(A) + 2*(r)$) {$C$};
        \node (base) at ($(A) + 0.5*(d)$) {};
        \draw[<-right hook,thick] (B) to node[above]{$\alpha$} (A);
        \draw[<-,thick] (B) to node[above]{$\beta$} (C);
\end{tikzpicture}
\end{center}
If there exists a morphism $\gamma'$ in $\BC$ such that we have a commutative triangle
\begin{center}
\begin{tikzpicture}[baseline=(base)]
        \coordinate (r) at (1.5,0);
        \coordinate (d) at (0,-1);
        \node (A) {$F(A)$};
        \node (B) at ($(A)+(r)-(d)$) {$F(B)$};
        \node (C) at ($(A) + 2*(r)$) {$F(C)$};
        \node (base) at ($(A) + 0.5*(d)$) {};
        \draw[<-right hook,thick] (B) to node[above,xshift=-1em]{$F(\alpha)$} (A);
        \draw[<-,thick] (B) to node[above,xshift=1em]{$F(\beta)$} (C);
        \draw[<-, thick] (A) to node[above]{$\gamma'$} (C);
\end{tikzpicture}
\end{center}
then there exists a unique morphism $\gamma: C \rightarrow A$ in $\AC$ such that we have a commutative triangle
\begin{center}
\begin{tikzpicture}[baseline=(base)]
        \coordinate (r) at (1.5,0);
        \coordinate (d) at (0,-1);
        \node (A) {$A$};
        \node (B) at ($(A)+(r)-(d)$) {$B$};
        \node (C) at ($(A) + 2*(r)$) {$C$};
        \node (base) at ($(A) + 0.5*(d)$) {};
        \draw[<-right hook,thick] (B) to node[above]{$\alpha$} (A);
        \draw[<-,thick] (B) to node[above]{$\beta$} (C);
        \draw[->,thick] (C) to node[above]{$\gamma$} (A);
\end{tikzpicture}
\end{center}
and $F(\gamma ) = \gamma'$.
\end{lemma}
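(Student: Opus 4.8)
The plan is to reduce everything to the universal property of a kernel. Since $\AC$ is a category as in \ref{remark:test_for_subobjects}, I would first present the mono $\alpha: A \hookrightarrow B$ as a kernel embedding: choose a morphism $\delta: B \rightarrow D$ in $\AC$ with $A = \kernel(\delta)$ and $\alpha$ its kernel embedding. The whole problem then collapses to verifying the single equation $\delta \circ \beta = 0$, because \ref{remark:test_for_subobjects} tells us that a morphism $\gamma: C \rightarrow A$ with $\alpha \circ \gamma = \beta$ exists precisely when $\delta \circ \beta = 0$ holds, and such a $\gamma$ is automatically unique since $\alpha$ is a mono.

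To obtain $\delta \circ \beta = 0$ I would transport the question across $F$ and use faithfulness. As $F$ respects kernels, $F(\alpha): F(A) \rightarrow F(B)$ is the kernel embedding of $F(\delta): F(B) \rightarrow F(D)$. The hypothesis supplies $\gamma'$ with $F(\alpha) \circ \gamma' = F(\beta)$, which exhibits $F(\beta)$ as factoring through $\kernel(F(\delta))$; composing on the left with $F(\delta)$ therefore gives $F(\delta) \circ F(\beta) = 0$, that is, $F(\delta \circ \beta) = F(0)$. Faithfulness of $F$ now forces $\delta \circ \beta = 0$ in $\AC$, which is exactly what the kernel test required. This transfer step is the only nonformal point in the argument: it is where faithfulness genuinely enters, and everything else is bookkeeping around the universal property of kernels.

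It remains to identify the lift with $\gamma'$. Here I would first note that $F$ preserves monomorphisms: a mono is a morphism with zero kernel, and since $F$ is additive it preserves the zero object, while respecting kernels gives $\kernel(F(\alpha)) \cong F(\kernel(\alpha)) = F(0) = 0$, so $F(\alpha)$ is a mono. From $F(\alpha) \circ F(\gamma) = F(\alpha \circ \gamma) = F(\beta) = F(\alpha) \circ \gamma'$ I can then cancel $F(\alpha)$ to conclude $F(\gamma) = \gamma'$, completing the proof. The expected main obstacle is purely the bridge $F(\delta \circ \beta) = F(0) \Rightarrow \delta \circ \beta = 0$ via faithfulness; once that is in place the statement follows formally.
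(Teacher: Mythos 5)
Your proof is correct and follows essentially the same route as the paper's: present $\alpha$ as the kernel embedding of some $\delta: B \rightarrow D$, reduce the existence of $\gamma$ to the single test $\delta \circ \beta = 0$ via \ref{remark:test_for_subobjects}, and transfer that equation across $F$ using that $F$ respects kernels and is faithful, with uniqueness coming from $\alpha$ being a mono. You are in fact slightly more thorough than the paper, which leaves the verification of $F(\gamma) = \gamma'$ implicit, whereas you establish it by observing that $F(\alpha)$ is a monomorphism (most directly because it is the kernel embedding of $F(\delta)$) and cancelling it on the left.
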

\begin{proof}
Let $\delta: B \rightarrow D$ be a morphism in $\AC$ whose kernel embedding is given by $\alpha$ (see \ref{remark:test_for_subobjects}).
Since $F$ respects kernels, $F(\delta): F(B) \rightarrow F(D)$ is a morphism whose kernel embedding is given by $F(\alpha)$.
By \ref{remark:test_for_subobjects}, we conclude that the existence of $\gamma'$ is equivalent to $F( \delta \circ \beta) = F(\delta) \circ F(\beta) = 0$. This in turn is equivalent to $\delta \circ \beta = 0$ since $F$ is faithful. Last, this is equivalent to the existence of $\gamma$ again by \ref{remark:test_for_subobjects}.
The uniqueness of $\gamma$ follows since $\alpha$ is a monomorphism.
\end{proof}

\begin{corollary}\label{corollary:subobjects_fully_faithful}
Let $\AC$ be a category as in \ref{remark:test_for_subobjects}.
Let $F: \AC \rightarrow \BC$ be a faithful functor that respects kernels and let $B \in \AC$.
Then the induced functor on subobjects (see \ref{remark:functor_on_subobjects}) 
\[
\Sub(B) \rightarrow \Sub( F(B) )
\]
is fully faithful, i.e., a subobject $A \hookrightarrow B$ is included in a subobject $C \hookrightarrow B$ if and only if $FA \hookrightarrow FB$ is included in $FC \hookrightarrow FB$.
\end{corollary}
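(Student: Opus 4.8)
The plan is to reduce the statement to a direct application of Lemma \ref{lemma:left_exact_faithful_yields_triangles}, after first recording two preliminary observations: that the induced functor on subobjects is well-defined, and that ``fully faithful'' between categories of subobjects amounts to nothing more than reflecting the inclusion order.

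First I would check that the functor $\Sub(B) \to \Sub(F(B))$ of \ref{remark:functor_on_subobjects} genuinely exists, i.e. that $F$ carries the embedding of a subobject to a monomorphism. This is where the standing hypothesis on $\AC$ (as in \ref{remark:test_for_subobjects}) enters: any subobject $A \hookrightarrow B$ arises as the kernel of some $\delta \colon B \to D$, and since $F$ respects kernels, $F(A) \hookrightarrow F(B)$ is the kernel of $F(\delta)$ and hence a monomorphism in $\BC$. So the assignment on subobjects is legitimate, and by functoriality it preserves inclusions.

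Next I would invoke Lemma \ref{lemma:sub_and_quo}(1): the hom-sets of both $\Sub(B)$ and $\Sub(F(B))$ are empty or singletons, so both categories are preorders. Consequently faithfulness of the induced functor is automatic, and fullness reduces to the single assertion that inclusion is \emph{reflected}: whenever $F(A) \hookrightarrow F(B)$ is included in $F(C) \hookrightarrow F(B)$, already $A \hookrightarrow B$ is included in $C \hookrightarrow B$. Since preservation was noted above, it remains only to establish this reflection statement. The core step is then precisely Lemma \ref{lemma:left_exact_faithful_yields_triangles}, applied with the lemma's monomorphism $\alpha$ taken to be the embedding $C \hookrightarrow B$ and with its $\beta$ taken to be the embedding $A \hookrightarrow B$. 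The lemma sets up a bijection between morphisms $\gamma \colon A \to C$ in $\AC$ making the triangle over $B$ commute and morphisms $\gamma' \colon F(A) \to F(C)$ in $\BC$ making the corresponding triangle over $F(B)$ commute; such a $\gamma$ is exactly a witness that $A \hookrightarrow B$ is included in $C \hookrightarrow B$, and such a $\gamma'$ is exactly a witness for the inclusion of the images. Hence one exists if and only if the other does, which is the desired equivalence.

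I do not anticipate a genuine obstacle, since the content has been pushed entirely into Lemma \ref{lemma:left_exact_faithful_yields_triangles}. The only points demanding care are clerical: matching the roles in the lemma to the direction of the inclusion (the lemma's triangle is drawn with the mono $\alpha$ as the target of the factorization, so to test $A \subseteq C$ one must feed $C$'s embedding into the $\alpha$-slot and $A$'s into the $\beta$-slot), and recalling that it is exactly the faithfulness of $F$ that upgrades the $\BC$-level factorization back to an $\AC$-level one via the vanishing criterion of \ref{remark:test_for_subobjects}.
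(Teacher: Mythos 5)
Your proposal is correct and takes essentially the same route as the paper, whose proof is precisely a direct application of Lemma \ref{lemma:left_exact_faithful_yields_triangles} with the same role assignment you describe (the target subobject's embedding in the $\alpha$-slot, the source's in the $\beta$-slot). The additional preliminary observations you record (well-definedness via kernels, and the preorder structure from \ref{lemma:sub_and_quo} reducing full faithfulness to reflection of inclusions) are exactly the implicit content the paper leaves to the reader.
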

\begin{proof}
This is a direct consequence of \ref{lemma:left_exact_faithful_yields_triangles}.
\end{proof}

\begin{remark}
All statements of this subsection have corresponding dual statements for quotient objects.
\end{remark}

\subsection*{Faithful functors}

\begin{remark}
We can test if an object $A$ in an additive category is isomorphic to the zero object by checking $\id_A = 0$.
\end{remark}

\begin{lemma}\label{lemma:faithful_implies_zero_test}
Let $F: \AC \rightarrow \BC$ be a functor between additive categories. 
If $F$ is faithful, then
\[
F(A) \cong 0 \text{\hspace{1em} if and only if \hspace{1em}} A \cong 0
\]
for all $A \in \AC$. The converse holds if $F$ is an exact functor between abelian categories.
\end{lemma}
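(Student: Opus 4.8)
The plan is to establish the two implications separately, both reducing to the criterion recalled just before the statement: an object $X$ in an additive category is zero precisely when $\id_X = 0$.

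For the direction assuming $F$ faithful, first observe that additivity of $F$ (standing convention) gives $F(0) \cong 0$, so the implication $A \cong 0 \Rightarrow F(A) \cong 0$ is immediate. For the reverse, I would argue with identity morphisms. Suppose $F(A) \cong 0$. Then $F(\id_A) = \id_{F(A)} = 0$, while additivity also yields $F(0_A) = 0$ for the zero endomorphism $0_A$ of $A$. Since $F$ is faithful, these two coinciding images force $\id_A = 0_A$, and therefore $A \cong 0$ by the zero test.

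For the converse, assume $F$ is exact between abelian categories and satisfies the stated equivalence. Because $F$ is additive, faithfulness is equivalent to the single implication $F(h) = 0 \Rightarrow h = 0$ for a morphism $h \colon A \to B$. Given such an $h$, I would pass to its image: exactness means $F$ preserves kernels and cokernels, hence images, so $F(\image(h)) \cong \image(F(h)) = \image(0) \cong 0$. The hypothesis then gives $\image(h) \cong 0$, which in an abelian category is equivalent to $h = 0$.

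The only step that genuinely uses more than formal bookkeeping is the preservation of images by an exact functor in the converse; the remaining arguments are routine manipulations with additivity and the identity-morphism characterization of the zero object.
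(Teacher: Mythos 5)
Your proposal is correct and follows essentially the same route as the paper's proof: the zero test via identity morphisms combined with faithfulness for the first part, and detection of $h = 0$ via the vanishing of $\image(h)$, preserved by exactness, for the converse. The only difference is that you spell out the routine details (additivity giving $F(0_A)=0$, faithfulness reducing to kernel-of-$F$ triviality) that the paper leaves implicit.
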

\begin{proof}
We have $\id_{F(A)} = F(\id_A) = 0$ if and only if $\id_A = 0$.
Moreover, if $F$ is an exact functor between abelian categories, we can test whether a morphism $\alpha$ is zero by testing whether its image object $\image( \alpha )$ is zero.
\end{proof}

\begin{lemma}\label{lemma:faithful_monoepiiso_test}
Let $F: \AC \rightarrow \BC$ be a faithful and exact functor between abelian categories.
Suppose given a morphism $\alpha$ in $\AC$.
Then $\alpha$ is a mono/epi/iso in $\AC$ if and only if $F(\alpha)$ is a mono/epi/iso in $\BC$.
\end{lemma}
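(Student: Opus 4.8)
The plan is to reduce all three cases (mono, epi, iso) to the zero-testing statement of Lemma \ref{lemma:faithful_implies_zero_test}, exploiting the exactness of $F$ to transport kernels and cokernels across the functor. The guiding observation is that in an abelian category a morphism $\alpha$ is a monomorphism precisely when $\kernel(\alpha) \cong 0$, an epimorphism precisely when $\cokernel(\alpha) \cong 0$, and an isomorphism precisely when it is simultaneously a mono and an epi. This converts each of the three properties into a question about the vanishing of a single object, which is exactly what Lemma \ref{lemma:faithful_implies_zero_test} is designed to handle.

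First I would record that, since $F$ is exact, it preserves kernels and cokernels, so that there are natural isomorphisms $F(\kernel(\alpha)) \cong \kernel(F(\alpha))$ and $F(\cokernel(\alpha)) \cong \cokernel(F(\alpha))$. For the monomorphism case I would then chain equivalences: $\alpha$ is a mono if and only if $\kernel(\alpha) \cong 0$, which by faithfulness of $F$ together with Lemma \ref{lemma:faithful_implies_zero_test} is equivalent to $F(\kernel(\alpha)) \cong 0$, which by exactness equals $\kernel(F(\alpha)) \cong 0$, which in turn holds if and only if $F(\alpha)$ is a mono. The epimorphism case is strictly dual, with cokernels in place of kernels.

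For the isomorphism case I would simply combine the first two results: $\alpha$ is an iso if and only if it is both a mono and an epi, and by the two equivalences already established this is the case if and only if $F(\alpha)$ is both a mono and an epi, i.e. if and only if $F(\alpha)$ is an iso.

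I do not anticipate a genuine obstacle here. The only two points that warrant a moment of care are, first, that an exact functor between abelian categories really does preserve both kernels and cokernels — this is precisely the content of exactness — and second, that the characterization \emph{iso} $=$ \emph{mono} $+$ \emph{epi} is valid in an abelian category. Both are entirely standard, so the argument amounts to assembling the zero-testing lemma with these structural facts.
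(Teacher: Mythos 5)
Your proposal is correct and follows essentially the same route as the paper's own proof: reduce mono/epi to the vanishing of the kernel/cokernel, transport these across $F$ by exactness, apply the zero-testing Lemma \ref{lemma:faithful_implies_zero_test}, and handle the iso case via the characterization iso $=$ mono $+$ epi in an abelian category. Your write-up merely spells out the chain of equivalences that the paper leaves implicit.
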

\begin{proof}
Being a mono/epi in an abelian category can be decided by testing whether the kernel/cokernel of a morphism is zero. Moreover, a morphism in an abelian category is an iso if and only if it is an epi and a mono.
Now, the claim follows from the exactness of $F$ and from \ref{lemma:faithful_implies_zero_test}.
\end{proof}

\begin{acknowledgement}
The author would like to thank Alban Quadrat for his personal introduction to algebraic systems theory and for posing the central question of this paper.
The author would also like to thank Alex Martsinkovsky for his personal introduction to the theory of finitely presented functors. Without both of these introductions, this paper could obviously not have been written by the author.
\end{acknowledgement}

\end{document}